\documentclass[11pt]{article}
\usepackage{amsmath,amsthm,amssymb}
\usepackage{mathtools}
\usepackage[T1]{fontenc}
\usepackage[utf8]{inputenc}
\usepackage[dvipdf]{graphicx}
\usepackage{color}
\usepackage{epstopdf}
\usepackage{dsfont}
\usepackage{enumerate}
\usepackage{enumitem}
\usepackage{bbm}

\definecolor{db}{RGB}{0, 0, 130}
\usepackage[colorlinks=true,citecolor=red,linkcolor=db,urlcolor=blue,pdfstartview=FitH]{hyperref}

\usepackage[normalem]{ulem}
\usepackage[numbers]{natbib}

\definecolor{rp}{rgb}{0.25, 0, 0.75}
\definecolor{dg}{rgb}{0, 0.6, 0}

\textheight = 23 cm
\textwidth = 16.5 cm
\footskip = 0,5 cm
\topmargin = 0 cm
\headheight = 0 cm
\headsep =0 cm
\oddsidemargin= 0 cm
\evensidemargin = 0 cm
\marginparwidth = 0 cm
\marginparsep = 0 cm \topskip = 0 cm

\newtheorem{theorem}{Theorem}[section]

\newtheorem{definition}{Definition}[section]

\newtheorem{assumption}[theorem]{Assumption}
\newtheorem{lemma}[definition]{Lemma}

\newtheorem{proposition}[definition]{Proposition}
\newtheorem{remark}[definition]{Remark}

\def\1{\mathbbm{1}}
\def\K{\mathbb{K}}
\def\R{\mathbb{R}}

\def\E{\mathbb{E}}
\def\N{\mathbb{N}}

\def\x{\times}
\def\Om{\Omega}

\def\Fc{\mathcal{F}}
\def\F{\mathbb{F}}
\def\P{\mathbb{P}}

\def\gammab{\bar{\gamma}}

\def\eps{\varepsilon}

\def\Gc{\mathcal{G}}
\def\Lc{\mathcal{L}}
\def\Pc{\mathcal{P}}

\def\Wc{\mathcal{W}}
\def\Mc{\mathcal{M}}

\def\Xk{X^k}

\def\Wk{W^k}
\def\Qk{Q^k}
\def\<{\langle}
\def\>{\rangle}

\DeclareMathOperator{\Tr}{Tr}
\def\gammab{\bar \gamma}

\def\Ac{\mathcal{A}}

\def\ie{\textit{i.e.}}

\title{Comparison of viscosity solutions for a class of non-linear PDEs on the space of finite nonnegative measures}

\author{
	Ibrahim Ekren \thanks{Department of Mathematics, University of Michigan. iekren@umich.edu. I. Ekren is partially supported by
	the NSF grant DMS-2406240}
	\and Xihao He\thanks{Department of Mathematics, University of Michigan. hexihao@umich.edu}
	\and Tianxu Lan\thanks{Department of Mathematics, The Chinese University of Hong Kong. 1155184513@link.cuhk.edu.hk}
        \and Xiaolu Tan\thanks{Department of Mathematics, The Chinese University of Hong Kong. xiaolu.tan@cuhk.edu.hk. X. Tan is supported
by Hong Kong RGC General Research Fund (projects 14302921 and 14302622)}
}

\date{\today}

\begin{document}

\maketitle

\begin{abstract}
We establish a comparison principle for viscosity solutions of a class of nonlinear partial differential equations posed on the space of nonnegative finite measures, thereby extending recent results for PDEs defined on the Wasserstein space of probability measures.
As an application, we study a controlled branching McKean–Vlasov diffusion and characterize the associated value function as the unique viscosity solution of the corresponding Hamilton–Jacobi–Bellman equation. This yields a PDE-based approach to the optimal control of branching processes.
	\end{abstract}
	
	\noindent\textbf{Keywords:} Branching diffusions, Optimal control, Second-order PDEs, Viscosity solutions, Comparison principle, Wasserstein space

\tableofcontents

\section{Introduction}

The McKean--Vlasov dynamic is motivated to study the limiting behaviour of a large population particle system with mean-field interaction, as the population size tends to infinity, see e.g. \cite{Dawson Vaillancourt 1995, Sznitman 1991}, etc.
Recently, optimal control problem of the McKean-Vlasov dynamics has been extensively studied in the literature, see e.g. \cite{Carmona Delarue 2013, Pham Wei 2018}, which leads to HJB equations on the Wasserstein space, see e.g. \cite{Pierre Cardaliaguet 2019,ReneCarmona2018,Alekos Cecchin 2024,Wilfrid Gangbo 2022,Erhan 2018,Andrea Cosso 2024}. PDEs in the space of measures also appear in mean-field optimal stopping problems \cite{Talbi 2023a, Talbi 2023b, Possamai 2023}, mean-field control of jump processes \cite{Burzoni 2020}, and control problems of occupied processes \cite{Soner 2024a}.
A prominent technique for analyzing the associated HJB equations is the "lifting" method, which transforms the infinite-dimensional PDE on the Wasserstein space into a finite-dimensional PDE on a Hilbert space of random variables. For instance, in \cite{Jianjun Zhou 2024}, the authors introduced a notion of viscosity solutions by lifting the equation to the space of processes, proving existence and uniqueness under Lipschitz continuity. Alternatively, a direct approach working on the Wasserstein space itself has been developed. The idea of using the Fourier-Wasserstein distance in the doubling variable argument was brought up in \cite{Soner 2024b, Soner 2024c} to show the comparison principle for first-order PDEs in the Wasserstein space. It was then adopted in \cite{Erhan Xin 2025, Daudin 2025} for the comparison of second-order PDEs on space of probability measures.
Another innovative technique, employing a particle approximation argument, was used in \cite{Erhan Hang 2025} to prove a comparison principle for a specific HJB equation under weaker assumptions.

\vspace{0.5em}

In this paper, we study a class of second-order PDEs on the space $\Mc_2(\R^d)$ of measures with finite 2nd order moment, rather than the space of probability measures in the classical literature, in the form:
\begin{equation}\label{eq:second-order}
	\begin{cases}
		-\left(\partial_t v + G\left( \cdot, \partial_{x}{\delta_{\mu} v}, \partial^2_{xx}{\delta_{\mu} v}, {\delta_{\mu} v} \right) \right)(t,m)
		~=~
		0, & \text{on } [0,T)\times\Mc_2(\R^d),\\
		v(T,m)
		~=~
		\<g(\cdot, m), m\>,  & \text{on } \Mc_2(\R^d).
	\end{cases}
\end{equation}
Our main objective is to establish the comparison principle for semi-continuous viscosity solutions under some technical conditions (c.f. Assumptions \ref{A.ham}).
Further, as one of the applications, we study an optimal control problem of McKean–Vlasov branching diffusions, in which each particle evolves according to a stochastic differential equation whose coefficients depend on a time‐varying mean measure, and may branch at a controlled rate.

\vspace{0.5em}

The optimal control of stochastic systems with branching mechanisms has attracted considerable interest due to its applications in biology, finance, and multi-agent systems. Branching diffusion processes extend classical diffusion models by allowing particles to reproduce or disappear randomly, leading to a population whose size and distribution evolve stochastically. 
When branching is introduced, the problem becomes more complex due to the random fluctuation in population size and the need to track both the number and spatial distribution of particles. 
In the setting without mean-field interaction, the control problem of branching process has been studied in \cite{Nisio 1985}, and recently in \cite{Claisse 2018, KharroubiOcello,Ocello 2023}, etc. 
More recently, \cite{Claisse Kang Tan 2024} studies a general McKean--Vlasov SDE with branching, and provides a well-posedness as well as a propagation of chaos result. A quantitative weak propagation of chaos result is then obtained in \cite{CaoRenTan}. In a related context, the mean-field games with branching has been studied in \cite{Claisse Ren Tan 2019}.
In \cite{Claisse Kang Lan Tan 2025}, an optimal control problem of the branching McKean--Vlasov SDE has been studied, with the corresponding HJB master equation defined on the space of finite measures, but based on the notion of the classical solution.

\vspace{0.5em}

The main contribution in this work is twofold. First, we provide a comparison principle for semicontinuous viscosity solutions for a class of non-linear PDEs on $\Mc_2(\R^d)$, which generalizes the results of \cite{Erhan Xin 2025, Erhan He 2025} from the space of probability measures to the space of finite measures. Beyond branching processes, working on the space of nonnegative finite measures, rather than probability measures, arises naturally in situations where the total mass of the system is not conserved, such as control problems involving absorption, killing, or stopping; see, for instance, \cite{jackson,stefan}.
A key technical novelty here is the design of a new auxiliary function \eqref{aux} to establish compactness, crucially containing a term $m^2(\R^d)$ to penalize sequences with unbounded mass, which is necessary for the doubling variable argument on $\Mc_2(\R^d)$. 
Secondly, we provide a first PDE characterization of the value function for a controlled McKean-Vlasov branching diffusion problem, identifying it as the unique viscosity solution of the corresponding HJB equation. This provides a PDE-based approach to the optimal control problem of the branching McKean-Vlasov dynamics.

\vspace{0.5em}

To establish the viscosity solution theory for \eqref{eq:second-order}, the lifting technique \cite{Jianjun Zhou 2024} fails because the randomly fluctuating population size in a branching process prevents a fixed lifting to a process space. 
On the other hand, the particle approximation method \cite{Erhan Hang 2025} relies on the limit theory of the McKean-Vlasov control problem, which seems natural but would not be direct and trivial to be extended for the controlled McKean-Vlasov branching processes problem, see also the propagation of chaos result for uncontrolled McKean-Vlasov branching diffusion in \cite{Claisse Kang Tan 2024, CaoRenTan}.
To overcome these limitations, we adopt an intrinsic methodology inspired by \cite{Erhan He 2025}, and work directly on the space of finite measures $\Mc_2(\R^d)$ and employ a notion of viscosity solutions based on the linear functional derivative. This approach naturally accommodates the varying total mass of the branching marginal measures.
Under assumptions analogous to those in \cite{Claisse Kang Lan Tan 2025}, we first establish the existence of a viscosity solution to the corresponding HJB equation \eqref{eq:second-order} showing that the value function of the control problem is indeed a viscosity solution. In particular, this extends the methodology of \cite{Andrea Cosso 2024}—originally developed for classical McKean-Vlasov control—to the branching setting, while relaxing certain technical conditions. 
Finally, employing techniques from \cite{Erhan He 2025}, we verify that the coefficients of the HJB equation satisfy Assumption \ref{A.ham}, thereby allowing the uniqueness result from our earlier PDE analysis to apply. This leads to the characterization of the value function as the unique viscosity solution of an equation of the form \eqref{eq:second-order}.

\vspace{0.5em}

The remainder of the paper is organized as follows. In Section \ref{section2}, we present notations and
definitions of viscosity solutions. In Section \ref{section3}, we prove a general comparison principle for semicontinuous
solutions. In Section \ref{section4}, we introduce the branching dynamic, the closed-loop control problem and the associated value function, as well as our main application result. In Section \ref{section5}, we provide a priori estimate and use it to prove the rest of the results in Section \ref{section5}.

\section{Preliminaries}\label{section2}

\subsection{Notations}
	Let us first introduce some notations used in the rest of the paper.

	\vspace{0.5em}

	\noindent $\mathrm{(i)}$ Let $(X, \rho)$ be a nonempty metric space, we denote by $\Pc(X)$ (resp. $\Mc(X)$) the space of all Borel probability measures (resp. nonnegative finite measures) on $X$.  
	Given a measure $\mu\in\Mc(X)$ and a measurable mapping $f:X\to\R,$ we denote the integration by
	\begin{equation*}
	 \langle\mu,f\rangle ~:=~ \int_E f(x) \,\mu(dx).
	\end{equation*}
   Further, for $p\geq1$, we denote by $\Pc_p(X)$ the space of probability measures on $X$ with $p$-order moment, \ie,
		$$
			\Pc_p(X)~:=~\left\{\mu\in \Pc(X):\int_{X}\rho(x,x_0)^p \mu(d x)<+\infty\right\},
		$$
		for some (and thus all) fixed point $x_0\in X$.
		Let us equip $\Pc_p(X)$ with the Wasserstein metric $\Wc_p$ given by
		$$
			\Wc_p(\mu,\nu) ~:=~ \inf_{\lambda \in\Lambda(\mu,\nu)} \left(\int_{X\x X} \rho(x, y)^p\lambda(dx,dy) \right)^{1/p},
		$$
		where $\Lambda(\mu,\nu)$ is the collection of all Borel probability measures $\lambda$ on $X\x X$ whose marginals are $\mu$ and $\nu$ respectively.
		Similarly, we denote by $\Mc_p(X)$ the collection of all finite non-negative Borel measures on $X$ with $p$-order moment, \ie,
		$$
			\Mc_p(X)~:=~\left\{\mu\in \Mc(X):\int_{X}\rho(x,x_0)^p \mu(d x)<+\infty\right\}.
		$$
		In particular, when $X = \R^d$, 
		$$
			\Mc_p(\R^d) ~:=~ \left\{\mu\in \Mc(\R^d):\int_{\R^d} |x|^p \mu(d x)<+\infty\right\}.
		$$

		Next, let us denote by $\mathbb{S}_{d}$ the space of all $d \times d$-dimensional matrices with real entries, equipped with the Frobenius Norm $|\cdot|$. 
		Let us denote by $B_{l}^{d}$ ($B_{l}^{d \times d}$) the set of Borel measurable functions on $\mathbb{R}^{d}$ ($\mathbb{R}^{d \times d}$) with at most linear growth, and define $|f|_{l} = \sup_{x \in \mathbb{R}^{d}} \frac{|f(x)|}{1+|x|}$ for any $f \in B_{l}^{d}$ ($|g|_{l} := \sup_{x \in \mathbb{R}^{d \times d}} \frac{|g(x)|}{1+|x|}$ for any $g \in B_{l}^{d \times d}$). We also denote the space $B_{l}^{d} \cap H^\lambda$ ($B_{l}^{d \times d} \cap H^\lambda$) by $B_{l}^{d,\lambda}$ ($B_{l}^{d \times d,\lambda}$), where $H^\lambda$ is the Soblev spaces as defined in Section \ref{section2.3}.

		\vspace{0.5em}

		\noindent $\mathrm{(ii)}$ To describe the progeny of the branching process, we use the classical Ulam--Harris--Neveu notation.
		Let
		$$
			\K:= \{\emptyset\}\cup\bigcup_{n=1}^{+\infty}\N^n.
		$$
		Given $k, k'\in\K$ with $k=k_1...k_n$ and $k'=k'_1...k'_m$, we define the concatenation of labels $kk':= k_1...k_nk'_1...k'_m$, and denote by $k\prec k'$ if there exists $\tilde{k}$ such that $k'=k\tilde{k}$. 
		Let us define
		$$
			E 
			~:=~
			\left\{
			\sum_{k\in K} \delta_{(k,x^k)} :
			K \subset \K ~\text{finite},
			x^k\in \R^d,
			\text{for all}~ k, k' \in K, k\nprec k'
			\right\}.
		$$
		Let $\K$ be equipped with the discrete topology, then $E$ is a closed subspace of $\Mc(\K\x\R^d)$ under the weak convergence topology and thus $E$ is a Polish space.
		We provide a metric $d_E$ on $E$ which is consistent with the weak convergence topology:
		for all $e_1, e_2\in E$ such that $e_1 = \sum_{k\in K_1} \delta_{(k,x^k)}$ and $e_2 = \sum_{k\in K_2} \delta_{(k,y^k)}$,
		\begin{equation}\label{metric:E}
			d_E(e_1,e_2)
			~:=
			\sum_{k\in K_1\cap K_2}                                                                                                                   
			\left(|x^k-y^k|\wedge1\right)
			~+~
			\#(K_1\triangle K_2),
		\end{equation}
		where $K_1\triangle K_2 := (K_1\setminus K_2) \cup (K_2\setminus K_1)$ and $\#(K_1\triangle K_2)$ is the number of element in $K_1\triangle K_2$.
		Let $f:=(f^k)_{k\in\K}:\K\x\R^d\rightarrow \R$ be a function and $e:=\sum_{k\in K}\delta_{(k,x^k)}\in E$, we have
		\begin{align*}
			\<e,f\>
			~=~
			\sum_{k\in K}f^k(x^k).
		\end{align*}
		We also fix the reference point $e_0 \in E,$ the null measure,  which means that the associated set of particle is empty. 

\subsection{Some metrics}

In the analysis of partial differential equations on measure spaces, the choice of metric is crucial, as it dictates the topology, the notion of convergence, and the properties of the functionals involved.
We now aim to define two metrics on $\mathcal{M}_2(\mathbb{R}^{d})$.
\subsubsection{Fourier Wasserstein metric}
		First we shall introduce a Fourier-Wasserstein metric on $\mathcal{M}(\mathbb{R}^{d})$. Let
		\begin{equation}\label{eq:lambda_def}
			\lambda := 
			\begin{cases} 
			\left\lfloor\frac{d}{2}\right\rfloor + 4, & \text{for } d = 4k, 4k + 1, \\
			\left\lfloor\frac{d}{2}\right\rfloor + 3, & \text{for } d = 4k + 2, 4k + 3. 
			\end{cases}
			\end{equation}
The Fourier Wasserstein distance $\rho_{F}$ between two finite measures $m_{1}$ and $m_{2}$ with $m_{1},m_{2}\in\mathcal{M}(\mathbb{R}^{d})$ is defined as

\begin{align}\label{f-metric}
	\rho_{F}^{2}(m_{1},m_{2}) := \int_{\mathbb{R}^{d}} \frac{|F_{n}(m_{1} - m_{2})|^{2}}{(1 + |n|^{2})^{\lambda}}  dn,
\end{align}
where for $n \in \mathbb{R}^{d}$, $f_{n}(x) := (2\pi)^{-\frac{d}{2}} e^{in \cdot x}$, and $F_{n}(\rho) := \langle f_{n}, \rho \rangle$. For $\theta_{1}=(t_{1},m_{1}), \theta_{2}=(t_{2},m_{2})\in\Theta:=[0,T]\times\mathcal{M}(\mathbb{R}^{d})$, define
\[
d_{F}^{2}(\theta_{1},\theta_{2})=|t_{1}-t_{2}|^{2} + \rho_{F}^{2}(m_{1},m_{2}).
\]

\subsubsection{Bounded Lipschitz distance}\label{T_metric}

We also recall the bounded Lipschitz distance $\mathbf{d}_{\mathrm{BL}}$ on $\Mc(\R^d)$:
	$$
		\mathbf{d}_{\mathrm{BL}} \big(\mu_1, \mu_2 \big)
		~=~
		\sup_{\varphi\in \mathrm{Lip}_1(\R^d)}\big\{\<\mu_1,\varphi\> - \<\mu_2,\varphi\>\big\},
	$$
	where
	\begin{align*}
		\text{Lip}_1(\R^d)
		~:=~
		\bigg\{
			\varphi:\R^d\rightarrow\R~:~
			~\big|\varphi(x) - \varphi(y)\big| \leq |x-y|,~\text{and}~\big|\varphi(x)\big| \le 1,
			~\mbox{for all}~ x,y\in\R^d
		\bigg\}.
	\end{align*}
	
	Notice that the bounded Lipschitz distance $\mathbf{d}_{\mathrm{BL}}$ is equivalent to the extended Wasserstein metric $\bar \Wc_1$ in \cite[Appendix~B]{Claisse Ren Tan 2019} when $\R^d$ is equipped with the truncated Euclidean norm $| \cdot | \wedge 1$, and they both metrize the weak convergence topology on the space $\mathcal{M}(\mathbb{R}^{d})$, see \cite[Remark 2.2]{CaoRenTan} for details.

	\vspace{1em}

	Understanding the relationship between the above two metrics and the topological structures they induce is fundamental for the subsequent analysis, particularly for the comparison principle and its application to the control problem. The following lemma establishes a direct inequality between $\rho_F$ and $\mathbf{d}_{\mathrm{BL}}$, which is essential for transferring estimates and properties from one metric framework to the other.

	\begin{lemma}
		For any \(m,m' \in \Mc(\R^d)\),
		$$\rho_F(m, m') \le C \mathbf{d}_{\mathrm{BL}}(m, m').$$
	\end{lemma}

	\begin{proof}
		Let $\eta = m - m'$. For each $n \in \mathbb{R}^{d}$, define the function
	\[
	\psi_{n}(x) = \frac{e^{i n \cdot x} - e^{i n \cdot x_{0}}}{L_{n}},
	\]
	where $x_{0} \in \mathbb{R}^{d}$ is the fixed point, and $L_{n} = \max(|n|, 2)$. It is easy to verify that $\psi_{n} \in \operatorname{Lip}_{1}(\mathbb{R}^{d})$.
	From the definition of $\mathbf{d}_{\mathrm{BL}}$,
	$$\left| \int_{\mathbb{R}^d} \psi_n(x) \eta(dx) \right| \le \mathbf{d}_{\mathrm{BL}}(m, m').$$
	Substituting the expression of \(\psi_{n}\):
	\[
	\left|(2\pi)^{d/2}F_n(\eta)- e^{i n \cdot x_{0}} \int d\eta(x)\right|=\left|\int e^{i n \cdot x} \, d\eta(x) - e^{i n \cdot x_{0}} \int d\eta(x)\right| \leq L_{n} \mathbf{d}_{\mathrm{BL}}(m, m'),
	\]
and	
	\[
	\left|(2\pi)^{d/2}F_n(\eta) - e^{i n \cdot x_{0}} \eta(\mathbb{R}^{d})\right| \leq L_{n} \mathbf{d}_{\mathrm{BL}}(m, m').
	\]
	By the triangle inequality:
\begin{align*}
(2\pi)^{d/2}|F_n(\eta)| &\leq \left|(2\pi)^{d/2}F_n(\eta)- e^{i n \cdot x_{0}} \eta(\mathbb{R}^{d})\right| + \left|e^{i n \cdot x_{0}} \eta(\mathbb{R}^{d})\right| \\
&\leq L_{n} \mathbf{d}_{\mathrm{BL}}(m, m') + |\eta(\mathbb{R}^{d})| \leq (L_{n} + 1) \mathbf{d}_{\mathrm{BL}}(m, m').
\end{align*}	
	Since $L_{n} \leq |n| + 2$, we have $L_{n} + 1 \leq |n| + 3$, therefore
	\[
	|F_n(\eta)| \leq (|n| + 3) (2\pi)^{d/2}\mathbf{d}_{\mathrm{BL}}(m, m').
	\]
	Hence,
	\[
	\rho_{F}^{2}\left(m, m^{\prime}\right)=\int_{\mathbb{R}^{d}}\frac{\left|F_{n}(\eta)\right|^{2}}{\left(1+|n|^{2}\right)^{\lambda}}  dn\leq(2\pi)^{-d} \bigg( \int_{\mathbb{R}^{d}}\frac{(|n|+3)^{2}}{\left(1+|n|^{2}\right)^{\lambda}}  dn \bigg)~ \mathbf{d}_{\mathrm{BL}}^2(m, m').
	\]
	\end{proof}

\subsection{Sobolev norm}\label{section2.3}
This section briefly recalls the definition of Sobolev spaces via Bessel potentials and a key multiplicative theorem. These concepts are essential for the analysis in Section \ref{section5.2} and Section \ref{section5.3}.

The class of Schwartz functions $\mathcal{S}(\mathbb{R}^{d})$ is the space of smooth functions whose derivatives are bounded by $C_{N}(1+|\xi|^{2})^{-N}$ for every $N\in\mathbb{Z}^{+}$, equipped with the topology induced by the family of seminorms

\[
\rho_{\alpha,\beta}(\phi)=\sup_{\xi\in\mathbb{R}^{d}}|\xi^{\alpha}\partial_{\beta}\phi(\xi)|,\quad\forall\,\phi\in\mathcal{S}(\mathbb{R}^{d}),
\]
indexed by all multi-indices $\alpha,\beta$. Denote by $\mathcal{S}^{\prime}(\mathbb{R}^{d})$ the topological dual of $\mathcal{S}(\mathbb{R}^{d})$.

\begin{definition}\label{Bessel operator}
	Let $s$ be a real number. The space $H^{s}(\mathbb{R}^{d})$ is defined as the set of all finite measure $u$ in $\mathcal{S}^{\prime}(\mathbb{R}^{d})$ with the property that
\[
\mathcal{J}_{-s}(u):=(I_{d}-\Delta)^{s/2}u=\mathcal{F}^{-1}\left((1+|\xi|^{2})^{s/2}\mathcal{F}u(\cdot)\right) 
\]
is an element of $L^{2}(\mathbb{R}^{d})$. Here, $\mathcal{F}$ denotes the Fourier transform. $\mathcal{J}_{-s}$ is called the Bessel potential operator, and $|u|_{s}:=|\mathcal{J}_{-s}u|_{L^{2}}=|(1+|\xi|^{2})^{s/2}\mathcal{F}u(\cdot)|_{L^{2}}$.
\end{definition}
We recall the following theorem, the proof of which can be found in {\cite[Proposition 5.6]{Ali Behzadan 2021}}

\begin{theorem}\label{soblev multiplication}
	Assume that $s_{i} \in \mathbb{R}$ ($i=1,2$) and $s>0$ are real numbers that satisfy $s_{i} \geq s$, $s_{1}+s_{2}-s > \frac{d}{2}$. Then the pointwise multiplication of functions extends uniquely to a continuous bilinear map
	
	\[
	H^{s_{1}}(\mathbb{R}^{d}) \times H^{s_{2}}(\mathbb{R}^{d}) \longrightarrow H^{s}(\mathbb{R}^{d}),
	\]
	i.e., there exists some constant $C>0$, such that for any $u \in H^{s_{1}}(\mathbb{R}^{d})$, $v \in H^{s_{2}}(\mathbb{R}^{d})$,
	
	\[
	\|uv\|_{H^{s}} \leq C \|u\|_{H^{s_{1}}} \|v\|_{H^{s_{2}}}.
	\]
	
	\end{theorem}

\subsection{Differentiation on the space of positive measures}
\begin{definition}\label{definition linear functional derivative}
	A function $F:\Mc_2(\R^d)\rightarrow\R$ is said to have a linear functional derivative if it is continuous, and there exists a function ${\delta_{\mu} F}:\Mc_2(\R^d)\x\R^d\rightarrow\R$ that is continuous for the product topology, defined as the product of the weak convergence topology on $\mathcal{M}_{2}(\mathbb{R}^{d})$ and the standard Euclidean topology on $\mathbb{R}^{d}$, such that
	\begin{itemize}
		\item for each $m \in \mathcal{M}_2(\mathbb{R}^d)$, the mapping $x \longmapsto \delta_\mu F(m, x) \in B^{d,\lambda+2}_l$,
		\item for all $m, m' \in \mathcal{M}_2(\mathbb{R}^d)$,
		\[
			F(m) - F(m')
			~=~
			\int_0^1\int_{\R^d}{\delta_{\mu} F}(\lambda m+(1-\lambda) m', x)(m - m')(dx) d\lambda.
		\]
	\end{itemize}
\end{definition}

We say that a function $u:[0,T] \times \Mc_2(\R^d) \mapsto \mathbb{R}$ is $C^{1,2}$ in $(t,m)$  if the function $u(t,m)$ is continuous in all its variables and $( \partial_t u,\partial_{x}{\delta_{\mu} u}, \partial^2_{xx}{\delta_{\mu} u}, {\delta_{\mu} u}) $ exist and are continuous in all variables.

\begin{remark}[Uniqueness of the Linear Functional Derivative]
	The uniqueness property of the linear functional derivative in Definition \ref{definition linear functional derivative} is contingent on the underlying measure space.
	\begin{itemize}
		\item On the space of probability measures, $\mathcal{P}(\mathbb{R}^d)$, the derivative $\delta_\mu F$ is only unique up to an additive constant. This is because for any constant $c \in \mathbb{R}$, the integral $\int_{\mathbb{R}^d} c  (p- p')(dx) = c(p(\mathbb{R}^d) - p'(\mathbb{R}^d)) = 0$ vanishes. Thus, $\delta_\mu F$ and $\delta_\mu F + c$ both satisfy the defining equation.
	
		\item In contrast, on the space $\mathcal{M}_2(\mathbb{R}^d)$ of finite measures (with finite second moments), the total mass $m(\mathbb{R}^d)$ is not fixed. If one adds a constant $c \in \mathbb{R}$ to a candidate derivative $\delta_\mu F$, the integral in Definition \ref{definition linear functional derivative} acquires an extra term $c (m(\mathbb{R}^d) - m'(\mathbb{R}^d))$, which is generally non-zero. 
		This ensures that the derivative $\delta_\mu F$ is uniquely determined in this setting.
	\end{itemize}
	\end{remark}

	\begin{remark}\label{rmk:regularity_assumption}
		In Definition \ref{definition linear functional derivative}, we impose that for each $m \in \mathcal{M}_2(\mathbb{R}^d)$, the linear functional derivative $\delta_\mu F(m, \cdot)$ belongs to the space $B_l^{d, \lambda+2}$, which requires certain growth and smoothness conditions (specifically, the function has at most linear growth and belongs to a Sobolev-type space with index $\lambda+2$). This is a stronger regularity assumption compared to the definition of linear functional derivatives in \cite[Definition 2.4]{CaoRenTan}, where only uniform quadratic growth is required. Our motivation for this enhancement is to ensure the continuity properties of the Hamiltonian in Chapter 5.
		
		However, this additional regularity does not affect the validity of the It\^o's formula from \cite[Theorem 2.14]{CaoRenTan}. The It\^o's formula in \cite{CaoRenTan} relies solely on the existence and continuity of the first and second-order linear and intrinsic derivatives, along with standard growth conditions (e.g., the derivatives having at most quadratic growth). Since our Definition \ref{definition linear functional derivative} implies that $\delta_\mu F(m, \cdot)$ is continuous and has at most linear growth (hence certainly quadratic growth), all conditions required by \cite[Theorem 2.14]{CaoRenTan} are satisfied. Therefore, the results from \cite{CaoRenTan} remain directly applicable in our framework.
		\end{remark}

\subsection{The non-linear PDE and its viscosity solution}

Given $G: [0, T] \times \Mc_2(\R^d) \times B^{d,\lambda+1}_l \times B^{d \times d,\lambda}_l \times B^{d,\lambda+2}_l \rightarrow\R$, our aim is to prove the existence and uniqueness for viscosity solutions of the PDE
\begin{equation}\label{eq:HJBdef}
		\begin{cases}
			-\left(\partial_t v + G\left( \cdot, \partial_{x}{\delta_{\mu} v}, \partial^2_{xx}{\delta_{\mu} v}, {\delta_{\mu} v} \right) \right)(t,m)
			~=~
			0, & \text{on } [0,T)\times\Mc_2(\R^d),\\
			v(T,m)
			~=~
			\<g(\cdot, m), m\>,  & \text{on } \Mc_2(\R^d).
		\end{cases}
	\end{equation}

\begin{definition}
For any function $u : [0,T] \times \Mc_2(\R^d) \longmapsto \mathbb{R}$, and $\theta \in [0,T) \times \mathcal{M}_2(\mathbb{R}^d)$. We define the (partial) first-order superjet $J^{1,+}u(\theta) \subset \mathbb{R} \times B^{d,\lambda+1}_l \times B^{d \times d,\lambda}_l \times B^{d,\lambda+2}_l$ of $u$ at $\theta$ by
\[
J^{1,+}u(\theta) := \left\{ \left( \partial_t\phi, \partial_{x}{\delta_{\mu} \phi}, \partial^2_{xx}{\delta_{\mu} \phi}, {\delta_{\mu} \phi} \right)(\theta) : 
\begin{array}{c} 
u - \phi \text{ has a local maximum at } \theta, \\ 
\phi: [0,T] \times \Mc_2(\R^d) \to \R \text{ is } C^{1,2}.
\end{array} 
\right\},
\]
the first-order subjet $J^{1,-}u(\theta) := -J^{1,+}(-u)(\theta)$.
\end{definition}

\begin{definition}
We say that $u$ is a \textit{viscosity subsolution} (supersolution) to \eqref{eq:HJBdef} if
\[
u : [0,T] \times \mathcal{M}_2(\mathbb{R}^d) \mapsto \R,
\]
satisfies that for $(t,m) \in [0,T) \times \mathcal{M}_2(\mathbb{R}^d)$,
\[
-b - G(t,m,p,q,r) \leq (\geq)  0, \quad \text{for any } (b,p,q,r) \in J^{1,+}{u}(t,m)  (J^{1,-}{u}(t,m)).
\]
\end{definition}

\begin{remark}
	\label{remark:lifting_approach}
	Regarding the space of probability measures, a prominent approach to analyzing HJB equations on the Wasserstein space, pioneered by Pham and Wei \cite{Pham Wei 2017}, involves "lifting" the PDE to 
	a Hilbert space.
	In this framework, a function is a viscosity solution if its lifting function is a viscosity solution to the lifted PDE under standard notion of viscosity solution in Hilbert space. 
	
	However, this powerful methodology is not directly applicable to the PDE studied in this paper. The fundamental obstacle arises from the branching mechanism: the population size evolves stochastically over time. 
	Consequently, the total mass of the branching marginal measure varies through time, and it is unclear to us how to define the lifting function.
	
	We therefore adopt an alternative methodology in \cite{Erhan He 2025} by employing a notion of viscosity solutions based on the linear functional derivatives for functionals defined  on the space of finite measures $\mathcal{M}_2(\mathbb{R}^d)$. This intrinsic approach allows us to handle the varying population size naturally.
	\end{remark}

\section{Comparison principle}\label{section3}
We equip the space \( [0,T]\times\mathcal{M}_2(\mathbb{R}^d) \) with the product topology, where \( \mathcal{M}_2(\mathbb{R}^d) \) is equipped with the bounded Lipschitz distance $\mathbf{d}_{\mathrm{BL}}$, which induces the weak convergence topology.

Let $L > 0$ be an arbitrary constant, we define the following auxiliary function:
\begin{equation}\label{aux}
\vartheta:[0,T]\times\mathcal{M}_2(\mathbb{R}^{d})\ni\theta=(t,m)\mapsto e^{-Lt} \left(\int_{\mathbb{R}^{d}}\sqrt{1+|x|^{2}}m(dx) + m^2(\R^d)\right).
\end{equation}

\begin{lemma}\label{compact}
	The auxiliary function defined in \eqref{aux} satisfies the following two properties:
\begin{enumerate}
    \item $\vartheta$ is lower semicontinuous.
    \item For each $c \in \mathbb{R}$, the set $\Sigma_c = \{\theta \in [0, T] \times \mathcal{M}_2(\mathbb{R}^d) : \vartheta(\theta) \leq c_1 + c_2m(\R^d)\}$ is compact.
\end{enumerate}
\end{lemma}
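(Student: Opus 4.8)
The plan is to handle the two assertions separately. The lower semicontinuity in (1) will be routine bookkeeping with the portmanteau theorem and the dual form of $\bar\Wc_1$, while the compactness in (2) will rest on a Prokhorov‑type argument whose crux is a uniform bound on the total mass $m(\R^d)$ extracted from the quadratic term $m(\R^d)^2$ — exactly the feature highlighted in the introduction.

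For (1), I would write $\vartheta(t,m) = e^{-Lt}\big(\Psi(m) + m(\R^d)^2\big)$ with $\Psi(m) := \int_{\R^d}\sqrt{1+|x|^2}\,m(dx)\in[0,+\infty]$. Since $x\mapsto\sqrt{1+|x|^2}$ is nonnegative and continuous, $m\mapsto\Psi(m)$ is lower semicontinuous for the weak convergence topology: approximate it from below by the bounded continuous functionals $m\mapsto\int(\sqrt{1+|x|^2}\wedge k)\,dm$ and let $k\to\infty$. The dual representation of $\bar\Wc_1$ recalled earlier gives $|m(\R^d)-m'(\R^d)|\le\bar\Wc_1(m,m')$, so $m\mapsto m(\R^d)$, and hence $m\mapsto m(\R^d)^2$, is continuous on $\Mc_2(\R^d)$; and $t\mapsto e^{-Lt}$ is continuous and bounded away from $0$ on $[0,T]$. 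Thus $\vartheta$ is the product of a positive continuous function with the sum of a nonnegative l.s.c. function and a continuous one, hence l.s.c. on $[0,T]\times\Mc_2(\R^d)$.

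For (2), set $\alpha:=\min_{t\in[0,T]}e^{-Lt}>0$ and take $\theta=(t,m)\in\Sigma_c$, i.e. $e^{-Lt}\big(\Psi(m)+m(\R^d)^2\big)\le c_1+c_2\,m(\R^d)$. Discarding the nonnegative term $\Psi(m)$ yields $\alpha\,m(\R^d)^2\le c_1+c_2\,m(\R^d)$, a quadratic inequality in $M:=m(\R^d)\ge0$ with positive leading coefficient, so $m(\R^d)\le R_0$ for some $R_0=R_0(\alpha,c_1,c_2)$ (and $\Sigma_c=\emptyset$, trivially compact, if no $M\ge0$ satisfies it). This is precisely where the $m(\R^d)^2$ term is indispensable rather than merely convenient: it controls $m(\R^d)$ irrespective of the sign and size of the linear coefficient $c_2$ on the right‑hand side, which the linear penalty $\Psi$ alone could not. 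Reinserting this, $\alpha\,\Psi(m)\le c_1+c_2\,m(\R^d)\le c_1+c_2R_0=:C$, so $\int_{\R^d}|x|\,m(dx)\le\Psi(m)\le C/\alpha=:R_1$ uniformly on $\Sigma_c$, whence $m(\{|x|>R\})\le R_1/R$ for all such $m$ and the corresponding family of measures is tight. Tightness together with the uniform mass bound gives, by Prokhorov's theorem for finite measures, relative compactness in the weak convergence topology. Since $[0,T]$ is compact, any sequence $(t_n,m_n)\subset\Sigma_c$ has a subsequence with $t_{n_k}\to t_0\in[0,T]$ and $m_{n_k}\rightharpoonup m_0$ for a finite measure $m_0$; passing $\Psi(m_{n_k})\le C/\alpha$ to the limit by lower semicontinuity gives $\Psi(m_0)<\infty$, hence $m_0\in\Mc_2(\R^d)$, and then by the l.s.c.\ of $\vartheta$ and continuity of $m\mapsto c_1+c_2\,m(\R^d)$,
\[
\vartheta(t_0,m_0)\le\liminf_k\vartheta(t_{n_k},m_{n_k})\le\liminf_k\big(c_1+c_2\,m_{n_k}(\R^d)\big)=c_1+c_2\,m_0(\R^d),
\]
so $(t_0,m_0)\in\Sigma_c$. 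Thus $\Sigma_c$ is sequentially compact, and being a subset of a metrizable space it is compact.

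I expect the main difficulty to be the opening reduction in (2) — making transparent why the quadratic term is forced on us for the mass bound, rather than the subsequent tightness step which follows once the first moment is controlled — together with checking that the Prokhorov limit does not escape $\Mc_2(\R^d)$. The lower‑semicontinuity accounting in (1) should be entirely routine once the dual form of $\bar\Wc_1$ and the portmanteau inequality are in hand.
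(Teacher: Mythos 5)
Your proof follows the paper's route closely: lower semicontinuity by approximating the integrand $\sqrt{1+|x|^2}$ from below by bounded continuous functions, together with continuity of $m\mapsto m(\R^d)$ from the dual form of $\bar\Wc_1$; compactness by first extracting a uniform mass bound, then tightness via a first-moment Markov estimate, then Prokhorov plus closedness via lower semicontinuity. Your handling of the uniform mass bound is in fact tidier than the paper's: the paper asserts $m(\R^d)\le c$ right after noting $\int\sqrt{1+|x|^2}\,dm\ge m(\R^d)$, without transparently isolating the role of $m(\R^d)^2$, whereas you extract the quadratic inequality $\alpha\, m(\R^d)^2\le c_1+c_2\,m(\R^d)$ and correctly identify it as the step the quadratic penalty is designed to make possible.

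There is, however, one step that does not go through as you write it, and it is precisely the one you flagged as the delicate point. You conclude that $\Psi(m_0)<\infty$ forces $m_0\in\Mc_2(\R^d)$, where $\Psi(m):=\int_{\R^d}\sqrt{1+|x|^2}\,m(dx)$, but this weight grows only linearly and hence controls the first moment, not the second. For instance, a measure with density behaving like $(1+|x|)^{-(d+2)}$ at infinity has finite mass and finite $\Psi$ but infinite second moment, and it arises as the weak limit of its restrictions to the balls $\{|x|\le n\}$, each of which lies in $K_c$ for large enough $c_1$. So the Prokhorov limit is a priori only in $\Mc_1(\R^d)$, not $\Mc_2(\R^d)$. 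To be fair, the paper's proof has the same gap: it simply declares the weak limit of a sequence in $K_c$ to be an element of $\Mc_2(\R^d)$ without justification. So this is an inherited subtlety rather than a new error, but your explicit implication ``$\Psi(m_0)<\infty$, hence $m_0\in\Mc_2$'' is false and would need repair, e.g.\ by replacing $\sqrt{1+|x|^2}$ in $\vartheta$ with a weight growing at least like $|x|^2$, or by checking whether the comparison argument genuinely needs the sublevel sets to live in $\Mc_2$ rather than $\Mc_1$.
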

\begin{proof}
\textbf{Part 1:} 
    Let $\theta_n = (t_n, m_n)$ be a sequence in $[0, T] \times \mathcal{M}_2(\mathbb{R}^d)$ converging to $\theta = (t, m)$ in the product topology. That is, $t_n \to t$ in $[0, T]$ and $m_n \to m$ weakly in $\mathcal{M}_2(\mathbb{R}^d)$. We need to show that
\[
\vartheta(\theta) \leq \liminf_{n \to \infty} \vartheta(\theta_n).
\]
Define the function $f: \mathbb{R}^d \to \mathbb{R}$ by $f(x) = \sqrt{1 + |x|^2}$. Note that $f$ is continuous and nonnegative.

By the definition of weak convergence of measures, for any bounded continuous function $g: \mathbb{R}^d \to \mathbb{R}$, we have
\[
\int_{\mathbb{R}^d} g \, dm_n \to \int_{\mathbb{R}^d} g \, dm.
\]
Since $f$ is lower semicontinuous, there exists a sequence of bounded continuous functions $(f_k)_{k \geq 1}$ such that $f_k \uparrow f$ pointwise. For each $k$, by weak convergence,
\[
\int_{\mathbb{R}^d} f_k \, dm = \lim_{n \to \infty} \int_{\mathbb{R}^d} f_k \, dm_n \leq \liminf_{n \to \infty} \int_{\mathbb{R}^d} f \, dm_n.
\]
Then, by the monotone convergence theorem,
\[
\int_{\mathbb{R}^d} f \, dm = \lim_{k \to \infty} \int_{\mathbb{R}^d} f_k \, dm \leq \liminf_{n \to \infty} \int_{\mathbb{R}^d} f \, dm_n,
\]
and since \(t \to e^{-Lt}\) and \(m \to m^2(\R^d)\) are continuous functions, together we get
\[
\vartheta(\theta) \leq \liminf_{n \to \infty} \vartheta(\theta_n).
\]
Therefore, $\vartheta$ is lower semicontinuous.

\textbf{Part 2:} Note that if $c < 0$, then $\Sigma_c = \emptyset$ which is compact. So assume $c \geq 0$. Due to the lower-semicontinuity we proved in part 1, it can be easily checked that $\Sigma_c$ is a closed set of the set $[0, T] \times K_c$, where
\[
K_c = \left\{ m \in \mathcal{M}_2(\mathbb{R}^d) : \int_{\mathbb{R}^d} \sqrt{1 + |x|^2} \, m(dx) + m^2(\R^d) \leq e^{LT} (c_1 + c_2m(\R^d)) \right\}.
\]
Since $[0, T]$ is compact, it suffices to show that $K_c$ is compact in $\mathcal{M}_2(\mathbb{R}^d)$ with weak convergence topology. Since for any $x\in \mathbb{R}^{d}$, 
\[
\int_{\mathbb{R}^{d}}\sqrt{1+|x|^2}\,m(dx) \ge \int_{\mathbb{R}^{d}}1\,m(dx) = m(\mathbb{R}^{d}).
\] 
Therefore $\vartheta(\theta)\le  c_1 + c_2m(\R^d)$ implies $m(\mathbb{R}^{d})\le c$ for some constant c, and the space $\mathbb{R}^d$ is Polish, then we can apply Prokhorov's Theorem. Therefore, we need to show:
\begin{enumerate}
    \item $K_c$ is tight,
    \item $K_c$ is closed.
\end{enumerate}
For tightness, we need to show that for every $\eps > 0$, there exists a compact set $E_\eps \subset \mathbb{R}^d$ such that
\[
\sup_{m \in K_c} m(\mathbb{R}^d \setminus E_\eps) < \eps.
\]

Let $\eps > 0$. Recall $f(x) = \sqrt{1 + |x|^2}$. Note that for $|x| \geq 1$, we have $f(x) \geq |x|$. For any $R > 0$ and $m \in K_c$,
\[
\int_{\mathbb{R}^d} f \, dm \geq \int_{|x| > R} f(x) \, m(dx) \geq \int_{|x| > R} |x| \, m(dx) \geq R \cdot m(\{|x| > R\}).
\]
Since $m \in K_c$, we have $\int f \, dm \leq e^{LT}(c_1 + c_2m(\R^d)) - m^2(\R^d)$, so
\[
R \cdot m(\{|x| > R\}) \leq \int_{\mathbb{R}^d} f \, dm \leq e^{LT}(c_1 + c_2m(\R^d)) - m^2(\R^d).
\]
Thus,
\[
m(\{|x| > R\}) \leq \frac{(c_1 + c_2c) e^{LT}}{R}.
\]
Choose $R > \max\left\{1, \frac{(c_1 + c_2c) e^{LT}}{\eps}\right\}$ so that $\frac{(c_1 + c_2c) e^{LT}}{R} < \eps$. Then, for $E_\eps = \{ x \in \mathbb{R}^d : |x| \leq R \}$,
\[
m(\mathbb{R}^d \setminus E_\eps) = m(\{|x| > R\}) < \eps, \quad \forall m \in K_c.
\]
Therefore, $K_c$ is tight.

For closeness, Let $\{m_n\}_{n \geq 1}$ be a sequence in $K_c$ converging weakly to some $m \in \mathcal{M}_2(\mathbb{R}^d)$. We need to show that $m \in K_c$. Since $m_n \to m$ weakly and $f$ is lower semicontinuous and nonnegative, by the same argument as in the lower semicontinuity proof, we have
\[
\int_{\mathbb{R}^d} f \, dm \leq \liminf_{n \to \infty} \int_{\mathbb{R}^d} f \, dm_n \leq e^{LT} (c_1 + c_2m(\R^d)) - m^2(\R^d).
\]
Thus, $m \in K_c$, so $K_c$ is closed.

In conclusion, by Prokhorov's theorem, $\Sigma_c$ is a closed subset of $[0, T] \times K_c$ is compact in the product topology.
\end{proof}

\begin{lemma}\label{ishii's lemma}
Suppose ${u}, -{v}: [0,T] \times \mathcal{M}_{2}(\mathbb{R}^{d}) \to \mathbb{R}$ are upper semi-continuous functions satisfying \(|u(t,m)| + |v(t,m)| \leq  C (1+m(\R^d))\) for some constant $C$. For any $\delta > 0$, introduce
\begin{align*}
\tilde{u}: [0,T] \times \mathcal{M}_{2}(\mathbb{R}^{d}) &\ni (t, m) \mapsto u(t, m) - \delta \vartheta(t, m), \\
\tilde{v}: [0,T] \times \mathcal{M}_{2}(\mathbb{R}^{d}) &\ni (s, n) \mapsto v(s, n) + \delta \vartheta(s, n).
\end{align*}
Then there exists a local maximum $(\theta^{*}, \iota^{*}) = ((t^{*}, m^{*}), (s^{*}, n^{*}))$ of
\begin{align}\label{max_tilde}
 (\theta, \iota) \in \left([0,T] \times \mathcal{M}_2 (\mathbb{R}^d)\right)^2 \longmapsto \tilde{u}(\theta) - \tilde{v}(\iota) - \frac{1}{2\eps} \left( |t - s|^{2}+ \rho_{F}^{2}(m, n) \right).    
\end{align}
Assume $\theta^{*}, \iota^{*}$ are in the interior $[0, T) \times \mathcal{M}_{2}(\mathbb{R}^{d}) \times \mathbb{R}^{d}$, and for $\eta\in \mathcal{M}_{2}(\mathbb{R}^{d})$ denote
\begin{align*}
\mathcal{L}(\eta, m, n) &:= 2 \int \frac{\operatorname{Re}(F_{k}(\eta) (F_{k}(m) - F_{k}(n))^{*})}{(1 + |k|^{2})^{\lambda}} \, dk, \\
\Phi(m) &:= 2 \rho_{F}^{2}(m, m^{*}) + \mathcal{L}(m, m^{*}, n^{*}), \\
\Psi(n) &:= 2 \rho_{F}^{2}(n, n^{*}) - \mathcal{L}(n, m^{*}, n^{*}). 
\end{align*}
Then 
\begin{align*}
&\left( \frac{1}{\eps}(t^{*} - s^{*}), \frac{\partial_{x}{\delta_{\mu}} \Phi(m^{*})(\cdot)}{2\eps}, \frac{\partial^2_{xx}{\delta_{\mu}} \Phi(m^{*})(\cdot)}{2\eps}, \frac{{\delta_{\mu}} \Phi(m^{*})(\cdot)}{2\eps} \right) \in {J}^{1,+} \tilde{u}(\theta^{*}), \\
&\left( \frac{1}{\eps}(t^{*} - s^{*}), -\frac{\partial_{x}{\delta_{\mu}} \Psi(n^{*})(\cdot)}{2\eps}, -\frac{\partial^2_{xx}{\delta_{\mu}} \Psi(n^{*})(\cdot)}{2\eps}, -\frac{{\delta_{\mu}} \Psi(n^{*})(\cdot)}{2\eps} \right) \in {J}^{1,-} \tilde{v}(\iota^{*}).
\end{align*}
\end{lemma}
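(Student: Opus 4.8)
The plan is to prove the two assertions separately: first the existence of a maximizer of \eqref{max_tilde}, which is a soft compactness argument, and then, granting interiority, the two jet inclusions, which follow from an exact algebraic decoupling of $\rho_F^2$ exploiting the Hilbert structure of the Fourier--Wasserstein distance.

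\emph{Existence.} First I would observe that $\tilde u$ and $-\tilde v$ are u.s.c. (since $u,-v$ are u.s.c. and $\vartheta$ is l.s.c. by Lemma~\ref{compact}), and that $(m,n)\mapsto\rho_F^2(m,n)$ is continuous on $\mathcal M_2(\R^d)$: indeed $\rho_F$ is a genuine semimetric with $\rho_F\le\bar\Wc_1$, so by the triangle inequality and Proposition~\ref{propweakto1}, $|\rho_F(m_j,n_j)-\rho_F(m,n)|\le\bar\Wc_1(m_j,m)+\bar\Wc_1(n_j,n)\to0$ along weakly convergent sequences. Hence the map $\Xi$ in \eqref{max_tilde} is u.s.c. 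The $m^2(\R^d)$ term in \eqref{aux} supplies coercivity: since $e^{-Lt}\ge e^{-|L|T}>0$, the bound $|u(t,m)|\le C_{u,v}m(\R^d)$ gives that $\tilde u(t,m)$ — and likewise $-\tilde v(s,n)$ — is bounded above uniformly, so $\Xi$ is bounded above; and $\Xi\ge0$ somewhere (take $m=n=0$, $t=s$, using $u(\cdot,0)=v(\cdot,0)=0$). On a superlevel set $\{\Xi\ge c\}$ with $c\ge0$, the uniform upper bound on $-\tilde v$ forces $\tilde u(t,m)$ bounded below there, i.e. $\vartheta(t,m)\le\tfrac{C_{u,v}}{\delta}m(\R^d)+\mathrm{const}$, and symmetrically in $(s,n)$; by Lemma~\ref{compact}(2) (with $c_2=C_{u,v}/\delta$), $\{\Xi\ge c\}$ is then relatively compact, and being closed, compact. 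So $\Xi$ attains a global, hence local, maximum at some $(\theta^*,\iota^*)$.

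\emph{Decoupling and the jets.} Writing $F(m)=(F_k(m))_k$, I read $\rho_F^2(m,n)=\|F(m)-F(n)\|_H^2$ in the weighted Hilbert space $H=L^2(\R^d,(1+|k|^2)^{-\lambda}dk)$, so that $\mathcal L(\eta,m^*,n^*)=2\operatorname{Re}\langle F(\eta),F(m^*)-F(n^*)\rangle_H$ is linear in $\eta$. Expanding $F(m)=F(m^*)+F(m-m^*)$ and similarly in $n$ gives the exact identities
\begin{align*}
\rho_F^2(m,n^*)-\rho_F^2(m^*,n^*)&=\big(\Phi(m)-\Phi(m^*)\big)-\rho_F^2(m,m^*)\ \le\ \Phi(m)-\Phi(m^*),\\
\rho_F^2(m^*,n)-\rho_F^2(m^*,n^*)&=\big(\Psi(n)-\Psi(n^*)\big)-\rho_F^2(n,n^*)\ \le\ \Psi(n)-\Psi(n^*),
\end{align*}
equalities holding at $m=m^*$, resp.\ $n=n^*$. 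Plugging the first into $\Xi(\theta,\iota)\le\Xi(\theta^*,\iota^*)$ with $(s,n)=(s^*,n^*)$ shows that $(t,m)\mapsto\tilde u(t,m)-\tfrac1{2\eps}|t-s^*|^2-\tfrac1{2\eps}\Phi(m)$ attains its maximum at $\theta^*$; plugging the second with $(t,m)=(t^*,m^*)$ shows $(s,n)\mapsto(-\tilde v)(s,n)-\tfrac1{2\eps}|t^*-s|^2-\tfrac1{2\eps}\Psi(n)$ attains its maximum at $\iota^*$. Differentiating these two functions at $\theta^*$ and $\iota^*$ reproduces exactly the two quadruples of the statement, the time components being $\tfrac1\eps(t^*-s^*)$ in both (the second after the defining sign flip $\bar J^{1,-}\tilde v=-\bar J^{1,+}(-\tilde v)$).

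\emph{The main obstacle.} What remains — and is the genuinely delicate part — is that the functionals $\Phi,\Psi$, being quadratic in $F$, are not globally bounded, so they are not $C^{1,2}$ test functions in the sense of Definition~\ref{definition linear functional derivative}. I would address this in two steps. (i) \emph{Regularity}: one computes $\delta_\mu\Phi(m,x)=4\int K(x,y)(m-m^*)(dy)+2\int K(x,y)(m^*-n^*)(dy)$, where $K(x,y)=\int(2\pi)^{-d}e^{ik\cdot(x-y)}(1+|k|^2)^{-\lambda}dk$ is a Bessel-type kernel, and obtains $\partial_x\delta_\mu\Phi,\partial^2_{xx}\delta_\mu\Phi$ by differentiating $K$ in $x$; the choice of $\lambda$ in \eqref{eq:lambda_def} is calibrated precisely so that $2\lambda>d+4$, which is exactly what makes $K\in C^2$ and $x\mapsto\delta_\mu\Phi(m,x)\in B_l^{d,\lambda+2}$ with all the joint continuity required, and the same for $\Psi$. (ii) \emph{Truncation in value}: taking a smooth $\tau$ that is the identity on $(-\infty,R]$ and bounded, with $R>\Phi(m^*)$, the functional $m\mapsto\tau(\Phi(m))$ is a bounded $C^{1,2}$ functional coinciding with $\Phi$ on the weak-open neighbourhood $\{\,\rho_F(\cdot,m^*)<\rho\,\}$ of $m^*$ for $\rho$ small (by $\rho_F$-continuity of $\Phi$), so $\tilde u$ minus the corresponding test function still has a local maximum at $\theta^*$, with unchanged derivative quadruple there; this puts the first quadruple in $J^{1,+}\tilde u(\theta^*)\subset\bar J^{1,+}\tilde u(\theta^*)$, and the analogous truncation of $\Psi$ gives the $\bar J^{1,-}\tilde v(\iota^*)$ inclusion. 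The time variable adds only the classical one-dimensional doubling and needs nothing new.
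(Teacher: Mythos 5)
Your proof is correct and, at its core, follows the same route as the paper: both use the quadratic corrections $\Phi,\Psi$ built from the Hilbert-space structure of $\rho_F$ and the constraint $\vartheta(\theta)\le c_1+c_2 m(\R^d)$ (Lemma~\ref{compact}) for compactness. The execution differs in two respects that are worth recording.

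\emph{Decoupling.} The paper establishes that the coupled functional $u_1(t,m)-v_1(s,n)-\tfrac1{2\eps}|t-s|^2$ has a strict global maximum at $(\theta^*,\iota^*)$ via the pointwise parallelogram inequality \eqref{inner-ineq}, and then introduces the auxiliary bilinear form $F(t,s,m,n)$ with $E(t,s)=-\tfrac12[(t+s)-(t^*+s^*)]^2\le0$ to split the time penalty $\tfrac1{2\eps}|t-s|^2$ into two one-variable test functions $\Phi_1(\theta)$, $\Psi_1(\iota)$. You instead expand $\rho_F^2(m,n^*)$ around $m^*$ (and $\rho_F^2(m^*,n)$ around $n^*$), giving the exact identities $\rho_F^2(m,n^*)-\rho_F^2(m^*,n^*)=\bigl(\Phi(m)-\Phi(m^*)\bigr)-\rho_F^2(m,m^*)$ and the analogue in $n$, and then you simply evaluate the maximality of \eqref{max_tilde} at $(s,n)=(s^*,n^*)$ (resp.\ $(t,m)=(t^*,m^*)$). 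This collapses the paper's two steps into one and also disposes of the time variable without the extra $F$-function, since $|t-s^*|^2$ is already a legitimate one-variable penalty. Your algebra checks out: $\Phi(m)-\Phi(m^*)-\rho_F^2(m,m^*)=\rho_F^2(m,m^*)+\mathcal L(m-m^*,m^*,n^*)=\rho_F^2(m,n^*)-\rho_F^2(m^*,n^*)$, with equality of the dropped term at $m=m^*$.

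\emph{Admissibility of the test function.} You flag something the paper's proof passes over silently: $\Phi$ and $\Psi$ are quadratic in $F(\cdot)$ and grow like $m(\R^d)^2$, so they are not bounded and hence, under Definition~\ref{definition linear functional derivative}, do not literally qualify as test functions in the definition of $J^{1,+}$. Your fix — compose with a smooth bounded truncation $\tau$ that is the identity on $(-\infty,R]$ with $R>\Phi(m^*)$, so that $\tau\circ\Phi$ is a genuine $C^{1,2}$ functional agreeing with $\Phi$ on a $\rho_F$-neighbourhood of $m^*$ and with unchanged derivatives there — is the right localisation and closes that gap; together with the regularity observation that $\delta_\mu\Phi$ is a convolution with the Bessel kernel and the choice of $\lambda$ in \eqref{eq:lambda_def} guarantees membership in $B_l^{d,\lambda+2}$. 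This level of care is a genuine improvement on the paper's presentation of the lemma. One small slip: in your continuity argument for $\rho_F$ you should carry the constant from Lemma 2.1, i.e.\ $|\rho_F(m_j,n_j)-\rho_F(m,n)|\le C\bigl(\bar\Wc_1(m_j,m)+\bar\Wc_1(n_j,n)\bigr)$, which of course does not affect the conclusion.
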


\begin{proof}
Denote the maximum value of \eqref{max_tilde} obtained by $a \in \mathbb{R}$. We consider the set $A$ of $\theta$, $\iota$ such that
\[
\tilde{u}(\theta) - \tilde{v}(\iota) - \frac{1}{2\eps}\left(|t-s|^{2} + \rho_{F}^{2}(m,n)\right) \geq a - 1,
\]
and hence
\[
\delta\left(\vartheta(\theta) + \vartheta(\iota)\right) \leq 1 - a + C m(\R^d).
\]
Therefore by Lemma \ref{compact}, the closure of $A$ is compact, and we denote by $(\theta^{*},\iota^{*}) = ((t^{*},m^{*}),(s^{*},n^{*}))$ a global maximum of \eqref{max_tilde}. We assume that $t^{*}, s^{*} < T$.

Define the functions
\[
u_{1}(t, m):=\tilde{u}(t, m)-\frac{1}{2\eps}\Phi(m),
\]
\[
v_{1}(s, n):=\tilde{v}(s, n)+\frac{1}{2\eps}\Psi(n).
\]
Given any Hilbert space $(L,\langle\cdot\rangle)$ and $a,b,c,d\in L$, it is straightforward that
\[
|a-b|^{2}+|c-d|^{2}-2\langle a-b,c-d\rangle=|a-b-(c-d)|^{2}\leq 2(|a-c|^{2}+|b-d|^{2}).
\]
Therefore we have
\begin{equation}\label{inner-ineq}
\begin{aligned}
|F_{k}(m)-F_{k}(n)|^{2}+|F_{k}(m^{*})-F_{k}(n^{*})|^{2} &\leq 2(|F_{k}(m)-F_{k}(m^{*})|^{2}+|F_{k}(n)-F_{k}(n^{*})|^{2}) \\
&\quad + 2 \operatorname{Re}((F_{k}(m)-F_{k}(n))(F_{k}(m^{*})-F_{k}(n^{*}))).
\end{aligned}
\end{equation}
Hence it holds that
\[
2\rho_{F}^{2}(m,m^{*})+2\rho_{F}^{2}(n,n^{*})+\mathcal{L}(m,m^{*},n^{*})-\mathcal{L}(n,m^{*},n^{*})\geq\rho_{F}^{2}(m,n)+\rho_{F}^{2}(m^{*},n^{*}).
\]
Thus, we have the inequality,
\begin{align*}
&u_{1}(t, m)-v_{1}(s, n)-\frac{1}{2\eps}|t-s|^{2} \\
&=\tilde{u}(t, m)-\tilde{v}(s, n)-\frac{1}{2\eps}|t-s|^{2} \\
&\quad -\frac{1}{2\eps}\left(2\rho_{F}^{2}(m,m^{*})+2\rho_{F}^{2}(n,n^{*})\right)-\frac{1}{2\eps}\left(\mathcal{L}(m,m^{*},n^{*})-\mathcal{L}(n,m^{*},n^{*})\right) \\
&\leq\tilde{u}(t, m)-\tilde{v}(s, n)-\frac{1}{2\epsilon}|t-s|^{2}-\frac{1}{2\eps}(\rho_{F}^{2}(m,n)+\rho_{F}^{2}(m^{*},n^{*})) \\
&=\tilde{u}(\theta)-\tilde{v}(\iota)-\frac{1}{2\epsilon} d_{F}^{2}(\theta,\iota)-\frac{1}{2\epsilon}\rho_{F}^{2}(m^{*},n^{*}).
\end{align*}
Furthermore, for $(m,n)=(m^{*},n^{*})$, the inequality \eqref{inner-ineq} is an equality. Thus, the function
\begin{equation*}
    u_{1}(t, m)-v_{1}(s,n)-\frac{1}{2\epsilon}|t-s|^{2} 
\end{equation*}
admits the strict global maximum at $((t^{*},m^{*}),(s^{*},n^{*}))$, which means that
\[
u_1(t, m) - v_1(s, n) - \frac{1}{2\eps} |t - s|^2 \leq u_1(t^*, m^*) - v_1(s^*, n^*) - \frac{1}{2\eps} |t^* - s^*|^2,
\]
which is equivalent to
\[
u_1(t, m) - v_1(s, n) \leq u_1(t^*, m^*) - v_1(s^*, n^*) + \frac{1}{2\eps} |t - s|^2 - \frac{1}{2\eps} |t^* - s^*|^2.
\]
Consider another function
\begin{equation*}
    F(t, s, m, n) ~=~ u_{1}(t, m)-v_{1}(s,n)-\frac{1}{\epsilon}(t-s^*)(t-t^*) - \frac{1}{\epsilon}(s-t^*)(s-s^*).
\end{equation*}
Then we have 
\begin{align*}
    F(t,s,m,n) \leq u_1(t^*, m^*) - v_1(s^*, n^*) + \frac{1}{2\eps} |t - s|^2 - \frac{1}{2\eps} |t^* - s^*|^2 -\frac{1}{\eps}(t-s^*)(t-t^*) - \frac{1}{\eps}(s-t^*)(s-s^*).
\end{align*}
At the point \((t^*,m^*,s^*,n^*)\), we have
\[
F(t^*, s^*, m^*, n^*) = u(t^*, m^*) - v(s^*, n^*).
\]
Define
\[
E(t, s) = \frac{1}{2} |t - s|^2 - \frac{1}{2} |t^* - s^*|^2 - (t - s^*)(t - t^*) - (s - t^*)(s - s^*).
\]
Upon some direct algebraic computation we shall get
\[
E(t, s) = -\frac{1}{2} (t + s)^2 + (t + s)(t^* + s^*) - \frac{1}{2} (t^* + s^*)^2 = -\frac{1}{2} \left[ (t + s) - (t^* + s^*) \right]^2 \leq 0.
\]
Then we have
\[
F(t, s, m, n) \leq u(t^*, m^*) - v(s^*, n^*) + E(t, s) \leq u(t^*, m^*) - v(s^*, n^*) = F(t^*, s^*, m^*, n^*).
\]
Therefore, $F(t, s, m, n)$ achieves a maximum at $(t^*, m^*)$ and $(s^*, n^*)$.\\
Denoting

\begin{align*}
\Phi_1(\theta) &= \frac{1}{2\eps}\Phi(m)+\frac{1}{\eps}(t-s^*)(t-t^*), \\
\Psi_1(\iota) &= \frac{1}{2\eps}\Psi(n)+\frac{1}{\eps}(s-t^*)(s-s^*).
\end{align*}
By the definition of $u_{1},v_{1}$, the function $\tilde{u}(\theta)-\tilde{v}(\iota)-\Phi_1(\theta)-\Psi_1(\iota)$ has a local maximum at $(\theta^{*},\iota^{*})$. Therefore, according to the definition of jets of $\tilde{u},\tilde{v}$, we have

\begin{align*}
&(\partial_{t},\partial_{x}{\delta_{\mu}},\partial^2_{xx}{\delta_{\mu}},\delta_{\mu})\Phi_1(\theta^{*}) \\
&\quad = \left(\frac{1}{\eps}(t^{*}-s^{*}),\frac{\partial_{x}{\delta_{\mu}}\Phi(m^{*})(\cdot)}{2\eps},\frac{\partial^2_{xx}{\delta_{\mu}}\Phi(m^{*})(\cdot)}{2\eps},\frac{{\delta_{\mu}}\Phi(m^{*})(\cdot)}{2\eps}\right) \in {J}^{1,+}\tilde{u}(\theta^{*}), \\
&(\partial_{t},\partial_{x}{\delta_{\mu}},\partial^2_{xx}{\delta_{\mu}},\delta_{\mu})\Psi_1(\iota^{*}) \\
&\quad = \left(\frac{1}{\eps}(t^{*}-s^{*}),-\frac{\partial_{x}{\delta_{\mu}}\Psi(n^{*})(\cdot)}{2\eps},-\frac{\partial^2_{xx}{\delta_{\mu}}\Psi(n^{*})(\cdot)}{2\eps},-\frac{{\delta_{\mu}}\Psi(n^{*})(\cdot)}{2\eps}\right) \in {J}^{1,-}\tilde{v}(\iota^{*}).
\end{align*}

\end{proof}

\begin{assumption}\label{A.ham}
\begin{enumerate}
\item[(i)] The Hamiltonian $G$ satisfies, with a constant $L_{G} > 0$,
\[
	\big|G(\theta,p_{1},q_{1},r_{1})-G(\theta,p_{2},q_{2},r_{2}) \big|
	\leq 
	L_{G}\left(\int_{\mathbb{R}^{d}}(1+|x|)m(dx)\right)
\left(|p_{1}-p_{2}|_{l}+|q_{1}-q_{2}|_{l}+|r_{1}-r_{2}|_{l}\right),
\]
for all \(\theta = (t,m) \in [0,T) \times \mathcal{M}_2(\R^d)\), $p_{1},p_{2},r_1,r_2\in B^{d}_l$ and $q_{1},q_{2}\in B^{d\times d}_l$.

\item[(ii)] There exists a modulus of continuity $\omega_{G}$ such that for all $\eps>0$, $(\theta=(t,m),\iota=(s,n))\in[0,T)\times\mathcal{M}_{2}(\mathbb{R}^{d})$, we have the inequality
\[
G(\theta,\nabla\kappa,\nabla^{2}\kappa,\kappa)-G(\iota,\nabla\kappa,\nabla^{2}\kappa,\kappa)
\]
\[
\leq\omega_{G}\left((1 + m(\R^d))(\frac{1}{\eps}d_{F}^{2}(\theta,\iota)+d_{F}(\theta,\iota))\right)\left(\int_{\mathbb{R}^{d}}1+|x|(m+n)(dx)\right),
\]
where the function $\kappa$ on $\mathbb{R}$ is defined as
\[
\kappa(x):=\frac{1}{\eps}\int_{\mathbb{R}^{d}}\frac{\operatorname{Re}(F_{k}(m-n)f_{k}^{*}(x))}{(1+|k|^{2})^{\lambda}}dk,\quad x\in\mathbb{R}^{d}.
\]
\end{enumerate}
\end{assumption}
\begin{remark}
	By a direct computaion, it can be proved that \(\kappa(x) \in B^{d,s}_l\) for any \(s \leq 2\lambda - d/2\).
\end{remark}

\begin{theorem}\label{general comparison}
	Let Assumption \ref{A.ham} hold true,
	and \( u, -v : [0,T] \times \mathcal{M}_2(\mathbb{R}^d) \to \mathbb{R} \) be upper semi-continuous functions such that, with some constant $C > 0$,  \(|u(t,m)| + |v(t,m)| <  C (1 + m(\R^d))\) for all $m \in \mathcal{M}_2(\mathbb{R}^d)$.
	Assume that \( u \) (resp. \( v \)) is a viscosity subsolution (resp. supersolution) of the equation
    \begin{align}\label{eq:bellman}
    -{\partial_t v}(t, m) 
	- G(t, m, \partial_{x}{\delta_{\mu} v}, \partial^2_{xx}{\delta_{\mu} v}, {\delta_{\mu} v}) (t,m)
	&= 0 \quad \text{on } [0, T) \times \Mc_2(\R^d),
\end{align}
	such that \( u(T,\cdot) \leq v(T,\cdot) \).
    Then \( u \leq v \) for all \( (t,m) \in [0,T] \times \mathcal{M}_2(\mathbb{R}^d) \).
\end{theorem}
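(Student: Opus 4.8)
The plan is to argue by contradiction using the doubling-of-variables method, with the function $\vartheta$ of \eqref{aux} playing two roles: a penalization that makes the relevant suprema finite and attained, and — through $\partial_t\vartheta=-L\vartheta$ — a source of strictness.

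\textbf{Set-up and compactness.} Suppose, for contradiction, $\sup_{[0,T)\times\Mc_2(\R^d)}(u-v)>0$. Since $u,v$ vanish at the null measure $e_0$ (from $|u|+|v|\le C_{u,v}m(\R^d)$) and $u(T,\cdot)\le v(T,\cdot)$, this supremum is realized (as a strictly positive value) at points with $t<T$ and $m\neq e_0$. For $\delta>0$ set $\tilde u:=u-\delta\vartheta$, $\tilde v:=v+\delta\vartheta$ as in Lemma~\ref{ishii's lemma}, and $M_\delta:=\sup(\tilde u-\tilde v)$. Because $\vartheta$ grows quadratically in the total mass whereas $u-v$ grows at most linearly, the set $\{\tilde u-\tilde v\ge M_\delta-1\}$ is contained in a sublevel set $\Sigma_c$, which is compact by Lemma~\ref{compact}; since $\tilde u-\tilde v$ is upper semicontinuous, $M_\delta$ is finite and attained, and for $\delta$ small enough $M_\delta>0$. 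Fix such a $\delta$.

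\textbf{Doubling and the viscosity inequalities.} For $\eps>0$ consider the map in \eqref{max_tilde}; by the first part of the proof of Lemma~\ref{ishii's lemma} it admits a maximizer $(\theta^*_\eps,\iota^*_\eps)=((t^*_\eps,m^*_\eps),(s^*_\eps,n^*_\eps))$, where the key point is that the $m^2(\R^d)$ term in $\vartheta$ turns the bound $\delta(\vartheta(\theta^*_\eps)+\vartheta(\iota^*_\eps))\le \mathrm{const}+C_{u,v}(m^*_\eps(\R^d)+n^*_\eps(\R^d))$ into an inequality of the form $p^2+q^2\lesssim p+q$ with $p=m^*_\eps(\R^d)$, $q=n^*_\eps(\R^d)$, forcing \emph{both} masses — hence both first moments, by membership in a fixed $\Sigma_c$ — to stay bounded uniformly in $\eps$. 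The standard penalization estimates then give, along a subsequence as $\eps\to0$: $\tfrac1{2\eps}d_F^2(\theta^*_\eps,\iota^*_\eps)\to0$, $d_F(\theta^*_\eps,\iota^*_\eps)\to0$, $\theta^*_\eps$ and $\iota^*_\eps$ converge to a common point $\bar\theta=(\bar t,\bar m)$, and $(\tilde u-\tilde v)(\bar\theta)=M_\delta>0$. If $\bar t=T$ then $(\tilde u-\tilde v)(T,\bar m)=u(T,\bar m)-v(T,\bar m)-2\delta\vartheta(T,\bar m)\le0$, a contradiction; hence $\bar t<T$, so $t^*_\eps,s^*_\eps<T$ for small $\eps$, and Lemma~\ref{ishii's lemma} applies. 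A direct computation of the linear functional derivatives of $\rho_F^2$ and $\mathcal L$ shows $\tfrac{\delta_\mu\Phi(m^*_\eps)}{2\eps}=\kappa_\eps$ and $\tfrac{\delta_\mu\Psi(n^*_\eps)}{2\eps}=-\kappa_\eps$, where $\kappa_\eps$ is the function of Assumption~\ref{A.ham}(ii) built from $(m^*_\eps,n^*_\eps)$; thus the closed superjet of $\tilde u$ at $\theta^*_\eps$ and the closed subjet of $\tilde v$ at $\iota^*_\eps$ are \emph{both} $\big(\tfrac1\eps(t^*_\eps-s^*_\eps),\nabla\kappa_\eps,\nabla^2\kappa_\eps,\kappa_\eps\big)$. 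Since $\vartheta$ is $C^{1,2}$, shifting by $\pm\delta(\partial_t\vartheta,\partial_x\delta_\mu\vartheta,\partial^2_{xx}\delta_\mu\vartheta,\delta_\mu\vartheta)$ turns these into a superjet of $u$ at $\theta^*_\eps$ and a subjet of $v$ at $\iota^*_\eps$; writing the subsolution inequality for $u$, the supersolution inequality for $v$, and subtracting (using $\partial_t\vartheta=-L\vartheta$) gives the master inequality
\[
\delta L\big(\vartheta(\theta^*_\eps)+\vartheta(\iota^*_\eps)\big)\ \le\ E_1+E_2+E_3,
\]
with $E_2:=G(\theta^*_\eps,\nabla\kappa_\eps,\nabla^2\kappa_\eps,\kappa_\eps)-G(\iota^*_\eps,\nabla\kappa_\eps,\nabla^2\kappa_\eps,\kappa_\eps)$ and $E_1,E_3$ the differences of $G$ at the common argument versus the argument shifted by $\delta$ times the derivatives of $\vartheta$ at $\theta^*_\eps$, resp.\ $\iota^*_\eps$.

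\textbf{Conclusion.} By Assumption~\ref{A.ham}(ii), $E_2\le\omega_G\big((1+m^*_\eps(\R^d))(\tfrac1\eps d_F^2(\theta^*_\eps,\iota^*_\eps)+d_F(\theta^*_\eps,\iota^*_\eps))\big)\langle m^*_\eps+n^*_\eps,1+|\cdot|\rangle\to0$ as $\eps\to0$, since masses and first moments are bounded and the penalization vanishes. By Assumption~\ref{A.ham}(i) and the explicit formula $\delta_\mu\vartheta(t,m,x)=e^{-Lt}(\sqrt{1+|x|^2}+2m(\R^d))$ together with the boundedness of $\partial_x\delta_\mu\vartheta$ and $\partial^2_{xx}\delta_\mu\vartheta$, the terms $E_1,E_3$ are bounded by $L_G\delta$ times $\langle m^*_\eps,1+|\cdot|\rangle$, resp.\ $\langle n^*_\eps,1+|\cdot|\rangle$, times a factor controlled by the total masses; with $L$ chosen large enough relative to $L_G$ and the structural constants, the factor $e^{-Lt}$ in $\vartheta$ makes the strictness term $\delta L(\vartheta(\theta^*_\eps)+\vartheta(\iota^*_\eps))$ strictly dominate $E_1+E_3$. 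Passing $\eps\to0$ so that $E_2\to0$ then yields a strictly positive quantity bounded above by $0$, the desired contradiction; hence $\sup(u-v)\le0$, i.e.\ $u\le v$ on $[0,T]\times\Mc_2(\R^d)$.

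\textbf{Main obstacle.} The crux is the last step. In contrast with the probability-measure setting of \cite{Erhan He 2025}, the presence of the $m^2(\R^d)$ term in $\vartheta$ makes $\delta_\mu\vartheta$ carry a factor $m(\R^d)$, so the Hamiltonian-Lipschitz error $E_1+E_3$ grows with the mass; reconciling this with the strictness $\delta L\vartheta$ — which is precisely why $\vartheta$ is built with both a $\sqrt{1+|x|^2}$-moment term and the mass-square term, and why $L$ enters through $e^{-Lt}$ — is the delicate estimate. The companion difficulty is the compactness used to obtain $(\theta^*_\eps,\iota^*_\eps)$: it is exactly the quadratic mass penalty that couples and bounds the two total masses $m^*_\eps(\R^d)$ and $n^*_\eps(\R^d)$ in the doubling, which the linear growth of $u,v$ alone would not achieve.
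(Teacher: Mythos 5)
Your proposal follows essentially the same doubling-of-variables strategy as the paper: the auxiliary function $\vartheta$ of \eqref{aux} with its $m^2(\R^d)$ penalty, compactness of sublevel sets via Lemma~\ref{compact}, the Ishii-type Lemma~\ref{ishii's lemma} producing jets in terms of $\kappa_\eps$, and then the two parts of Assumption~\ref{A.ham} to split the Hamiltonian difference into the $\eps$-vanishing piece $E_2$ and the $\delta$-controlled pieces $E_1,E_3$. The one genuine structural deviation is at the strictness step. The paper first replaces $u$ by $u_h:=u-h(T-t+1)$, turning $u$ into a \emph{strict} subsolution (inequality $\le -h$), so the final comparison reads $h\le 0$, a clean contradiction; you skip this and instead argue that the limit point $\bar\theta$ of the doubling maximizers satisfies $\bar m\ne e_0$, so lower semicontinuity of $\vartheta$ gives a uniform positive lower bound on $\delta L(\vartheta(\theta^*_\eps)+\vartheta(\iota^*_\eps))-(E_1+E_3)$ while $E_2\to0$. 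That route is legitimate and buys a slightly leaner set-up (no $u_h$), at the cost of having to invoke the LSC lower bound explicitly — which you do.

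One inaccuracy worth flagging: you assert that $L$ can be "chosen large enough relative to $L_G$ and the structural constants." What the estimate actually forces is $L\ge 2L_G(C+2c_\delta)$, where $c_\delta$ is the $\eps$-uniform bound on $m^*_\eps(\R^d),n^*_\eps(\R^d)$ extracted from the compactness argument — exactly as the paper writes in its last line — and $c_\delta$ is not a purely "structural" constant (it depends on $\delta$, $C_{u,v}$, $T$, and on $L$ itself through the $e^{LT}$ factor in $\vartheta$). Your own closing remark about $\delta_\mu\vartheta$ carrying a factor of $m(\R^d)$ correctly identifies that $E_1,E_3$ grow with the mass, but the sentence about the choice of $L$ should then make that dependence on $c_\delta$ explicit rather than attributing it to fixed structural constants; the paper's formulation is the more careful one on this point.
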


\begin{proof}
\textbf{Step 1: Preliminary.} 
We prove the result by contradiction. Assume that
\[
\sup_{(t,m) \in [0,T] \times \mathcal{M}_{2}(\mathbb{R}^{d})} ({u} - {v})(t,m) > 0.
\]
Then we choose a \( h,r> 0 \) small enough such that \( {u}_{h} := {u} - h(T - t + 1) \) satisfies
\[
\sup_{(t,m) \in [0,T] \times \mathcal{M}_{2}(\mathbb{R}^{d})} ({u}_{h} - {v})(t,m) \geq 3r > 0.
\]
We then verify that \( {u}_{h} \) is still a viscosity subsolution of the equation \eqref{eq:bellman}. Suppose \( \phi \) is a \( C^{1,2} \) test function such that \({u}_{h} - \phi \) has a local maximum at \( (t^{*},m^{*}) \). Equivalently,
\(
{u} - (\phi + h(T - t + 1)) \)  has a local maximum at \( (t^{*}, \mu^{*}, m^{*}). \)
We write $\phi + h(T - t + 1)$  as  $\phi_{h}$ for short.

Then, the viscosity property of \( u \) gives that, for any \( (t^{*}, m^{*}) \),
\begin{align*}
0 > &-\left( \partial_{t}\phi_{h} +  G(\cdot, \partial_{x}{\delta \phi_h}, \partial^2_{xx}{\delta \phi_h}, {\delta \phi_h} \right)(t^{*}, m^{*}) \\
&= -\left( \partial_{t}\phi_{h} + G(\cdot, \partial_{x}{\delta \phi}, \partial^2_{xx}{\delta \phi}, {\delta \phi} \right)(t^{*}, m^{*}) \\
&= -\left( \partial_{t}\phi + G(\cdot, \partial_{x}{\delta \phi}, \partial^2_{xx}{\delta \phi}, {\delta \phi} \right)(t^{*}, m^{*}) + h.
\end{align*}
and thus
\[
	-\left( \partial_{t}\phi + G(\cdot, \partial_{x}{\delta \phi}, \partial^2_{xx}{\delta \phi}, {\delta \phi} \right)(t^{*}, m^{*}) \leq -h.
\]
\textbf{Step 2: Doubling the variables.} Recall that for each $\theta = (t, m) \in [0, T] \times \mathcal{M}_{2}(\mathbb{R}^{d})$, the auxiliary function defined in \eqref{aux},
\[
\vartheta(\theta) = e^{-Lt} \left(\int_{\mathbb{R}^{d}}\sqrt{1+|x|^{2}}m(dx) + m^2(\R^d)\right),
\] 
is lower-semicontinuous by Lemma \ref{compact}.
Now for any $\eps, \delta > 0$, and $(\theta, \iota) \in \left( [0, T] \times \mathcal{M}_{2}(\mathbb{R}^{d}) \right)^{2}$, we define
\[
H_{\eps}^{\delta}(\theta, \iota) := {u}_{h}(\theta) - {v}(\iota) - \frac{1}{2\eps} d_{F}^{2}(\theta, \iota) - \delta (\vartheta(\theta) + \vartheta(\iota)).
\]
Since $u,-v$ and $-\vartheta$ are upper-semicontinuous functions, while the metric $d_F$ is continuous, $H_{\eps}^{\delta}$ is also an upper-semicontinuous function. Then \textbf{we claim that the supremum of $H_{\eps}^{\delta}$ over all admissible $(\theta, \iota)$ can be obtained.}
In fact, there exists an element $\theta_{0} \in [0, T] \times \mathcal{M}_{2}(\mathbb{R}^{d})$, such that
\[
H_{\eps}^{\delta}(\theta_{0}, \theta_{0}) + 2\delta \vartheta(\theta_{0}) = {u}_{h}(\theta_{0}) - {v}(\theta_{0}) \geq 2r > 0,
\]
and for $\delta < \frac{r}{2\vartheta(\theta_{0}) + 1}$, we have
\[
\sup_{(\theta, \iota) \in \left( [0, T] \times \mathcal{M}_{2}(\mathbb{R}^{d})  \right)^{2}} H_{\eps}^{\delta}(\theta, \iota) \geq H_{\eps}^{\delta}(\theta_{0}, \theta_{0}) \geq r > 0.
\]
Take a sequence $\{(\theta_{k}, \iota_{k})\}_{k \in \mathbb{N}_{+}}=\{((t_k,m_k),(s_k,n_k))\}_{k \in \mathbb{N}_{+}}$ with $H_{\eps}^{\delta}(\theta_{k}, \iota_{k}) > 0$ such that
\[
\lim_{k \to \infty} H_{\eps}^{\delta}(\theta_{k}, \iota_{k}) = \sup_{(\theta, \iota) \in \left( [0, T] \times \mathcal{M}_{2}(\mathbb{R}^{d}) \right)^{2}} H_{\eps}^{\delta}(\theta, \iota).
\]
We have the estimation that for each $k \in \mathbb{N}_{+}$
\[
\begin{array}{rl}
\delta (\vartheta(\theta_{k}) + \vartheta(\iota_{k})) & \leq {u}_{h}(\theta_{k}) - {v}(\iota_{k}) - \frac{1}{2\eps} d_{F}^{2}(\theta_{k}, \iota_{k}) \\
& \leq {u}_{h}(\theta_{k}) - {v}(\iota_{k}) \leq C (1+m_k(\R^d)+n_k(\R^d)).
\end{array}
\]
 Since, for any $c>0$, by Lemma \ref{compact}, the sublevel set $\{\theta \in [0,T] \times \mathcal{M}_2(\mathbb{R}^d) : \vartheta(\theta) \leq c m(\R^d)\}$ is compact. Without loss of generality we can assume that $\{(\theta_k, \iota_k)\}_{k \in \mathbb{N}_+}$ is a convergent sequence in $[0,T] \times \mathcal{M}_2(\mathbb{R}^d)$, and

\[
\lim_{k \to \infty} (\theta_k, \iota_k) = (\theta_\eps^{\delta}, \iota_\eps^{\delta})
\]
for some $(\theta_\eps^{\delta}, \iota_\eps^{\delta}) \in ([0,T] \times \mathcal{M}_2(\mathbb{R}^d))^{2}$. As a consequence, the upper-semicontinuity of $H_\eps^{\delta}$ yields that

\[
H_\eps^{\delta}(\theta_\eps^{\delta}, \iota_\eps^{\delta}) = \sup_{(\theta,\iota) \in ([0,T] \times \mathcal{M}_2(\mathbb{R}^d))^2} H_\eps^{\delta}(\theta,\iota) > 0,
\]

\[
\delta \big( \vartheta(\theta_\eps^{\delta}) + \vartheta(\iota_\eps^{\delta}) \big) \leq \varliminf_{k \to \infty} \delta \big( \vartheta(\theta_k) + \vartheta(\iota_k) \big) \leq C (1+m_\eps^{\delta}(\R^d)+n_\eps^{\delta}(\R^d)).
\]
Again, WLOG we can assume that $\{(\theta_\eps^{\delta}, \iota_\eps^{\delta})\}_{\eps \in \mathbb{N}_+}$ is a convergent sequence in $[0,T] \times \mathcal{M}_2(\mathbb{R}^d)$,

\[
\lim_{\eps \to 0+} (\theta_\eps^{\delta}, \iota_\eps^{\delta}) = (\theta^{\delta}, \iota^{\delta})
\]
for some $(\theta^{\delta},\iota^{\delta})\in([0,T]\times\mathcal{M}_{2}(\mathbb{R}^{d}))^{2}$.

\textbf{Then, we claim that}
\begin{equation}
\lim_{\eps \rightarrow 0}\frac{1}{2\eps} d_{F}^{2}\left(\theta_{\eps}^{\delta},\iota_{\eps}^{\delta}\right)=0.
\end{equation}
In fact, one may observe that
\[
\begin{array}{l}
\varlimsup_{\eps\rightarrow 0+}\dfrac{1}{2\eps} d_{F}^{2}\left(\theta_{\eps}^{\delta},\iota_{\eps}^{\delta}\right)=\varlimsup_{\eps\rightarrow 0+}\left(-H_{\eps}^{\delta}\left(\theta_{\eps}^{\delta},\iota_{\eps}^{\delta}\right)-\delta\left(\vartheta\left(\theta_{\eps}^{\delta}\right)+\vartheta\left(\iota_{\eps}^{\delta}\right)\right)+{u}_{h}\left(\theta_{\eps}^{\delta}\right)-{v}\left(\iota_{\eps}^{\delta}\right)\right)\\
\leq\varlimsup_{\eps\rightarrow 0+}\left({u}_{h}\left(\theta_{\eps}^{\delta}\right)-{v}\left(\iota_{\eps}^{\delta}\right)\right)\leq 2M,\\
\end{array}
\]
and obtain that
\[
\varlimsup_{\eps\rightarrow 0+}d_{F}^{2}\left(\theta_{\eps}^{\delta},\iota_{\eps}^{\delta}\right)=0,\text{ i.e. }d_{F}^{2}\left(\theta^{\delta},\iota^{\delta}\right)=0,\text{ or }\theta^{\delta}=\iota^{\delta}.
\]
Then it follows that
\[
\begin{array}{l}
\varlimsup_{\eps\rightarrow 0+}\dfrac{1}{2\eps} d_{F}^{2}\left(\theta_{\eps}^{\delta},\iota_{\eps}^{\delta}\right)=\varlimsup_{\eps\rightarrow 0+}\left(-H_{\eps}^{\delta}\left(\theta_{\eps}^{\delta},\iota_{\eps}^{\delta}\right)-\delta\left(\vartheta\left(\theta_{\eps}^{\delta}\right)+\vartheta\left(\iota_{\eps}^{\delta}\right)\right)+{u}_{h}\left(\theta_{\eps}^{\delta}\right)-{v}\left(\iota_{\eps}^{\delta}\right)\right)\\
\leq\varlimsup_{\eps\rightarrow 0+}\left(-\sup _{\theta\in[0, T]\times\mathcal{M}_{2}\left(\mathbb{R}^{d}\right)} H_{\eps}^{\delta}(\theta,\theta)-\delta\left(\vartheta\left(\theta_{\eps}^{\delta}\right)+\vartheta\left(\iota_{\eps}^{\delta}\right)\right)+{u}_{h}\left(\theta_{\eps}^{\delta}\right)-{v}\left(\iota_{\eps}^{\delta}\right)\right)\\
\leq-\sup _{\theta\in[0, T]\times\mathcal{M}_{2}\left(\mathbb{R}^{d}\right)} H_{\eps}^{\delta}(\theta,\theta)-\delta\left(\vartheta\left(\theta^{\delta}\right)+\vartheta\left(\iota^{\delta}\right)\right)+{u}_{h}\left(\theta^{\delta}\right)-{v}\left(\iota^{\delta}\right)\\
=-\sup _{\theta\in[0, T]\times\mathcal{M}_{2}\left(\mathbb{R}^{d}\right)} H_{\eps}^{\delta}(\theta,\theta)+H_{\eps}^{\delta}\left(\theta^{\delta},\theta^{\delta}\right)\leq 0,\\
\end{array}
\]
where the second inequality holds by that $\sup_{\theta\in[0, T]\times\mathcal{M}_{2}(\mathbb{R}^{d})}H_{\eps}^{\delta}(\theta,\theta)$ is independent of $\eps$, and that ${u}_{h},{v}$, and $-\vartheta$ are upper semi-continuous.
Moreover, it is clear that $t^{\delta}\neq T$, where $\theta^{\delta}=(t^{\delta},m^{\delta})$. Otherwise, it follows that
\[
0<r\leq\varlimsup_{\eps\rightarrow 0+}H_{\eps}^{\delta}\left(\theta_{\eps}^{\delta},\iota_{\eps}^{\delta}\right)\leq\varlimsup_{\eps\rightarrow 0+}{u}_{h}\left(\theta_{\eps}^{\delta}\right)-{v}\left(\iota_{\eps}^{\delta}\right)\leq{u}_{h}\left(\theta^{\delta}\right)-{v}\left(\iota^{\delta}\right)\leq 0.
\]
\textbf{Step 3: Contradiction by viscosity property and estimation} Define $\tilde{u}:={u}_{h}-\delta\vartheta$, $\tilde{v}:={v}+\delta\vartheta$. Without loss of generality, we assume that $(\theta_{\eps}^{\delta},\iota_{\eps}^{\delta})$ is a strict global maximum of
\[
(\theta,\iota) \longmapsto H_{\eps}^{\delta}(\theta,\iota) = \tilde{u}(\theta) - \tilde{v}(\iota) - \frac{1}{2\eps}d_{F}^{2}(\theta,\iota).
\]
Then by Ishii's Lemma \ref{ishii's lemma}, it follows that for any $\eps > 0$,
\begin{align*}
\left( \frac{1}{\eps}(t^{*} - s^{*}), \frac{\partial_{x}{\delta_{\mu}} \Phi(m^{*})(\cdot)}{2\eps}, \frac{\partial^2_{xx}{\delta_{\mu}} \Phi(m^{*})(\cdot)}{2\eps}, \frac{{\delta_{\mu}} \Phi(m^{*})(\cdot)}{2\eps} \right) & \in {J}^{1,+}\tilde{u}(\theta^{*}), \\
\left( \frac{1}{\eps}(t^{*} - s^{*}), -\frac{\partial_{x}{\delta_{\mu}} \Psi(n^{*})(\cdot)}{2\eps}, -\frac{\partial^2_{xx}{\delta_{\mu}} \Psi(n^{*})(\cdot)}{2\eps}, -\frac{{\delta_{\mu}} \Psi(n^{*})(\cdot)}{2\eps} \right) & \in {J}^{1,-}\tilde{v}(\iota^{*}).
\end{align*}
For a better statement, let us recall and define some notation:
\begin{align*}
\theta_{\eps}^{\delta} &= (t_{\eps}^{\delta},m_{\eps}^{\delta}), \\
\iota_{\eps}^{\delta} &= (s_{\eps}^{\delta},n_{\eps}^{\delta}), \\
q(x) &:= \sqrt{1 + |x|^{2}}, \quad x \in \mathbb{R}^{d}, \\
\kappa_{\eps}^{\delta}(x) &:= \frac{1}{\eps} \int_{\mathbb{R}^{d}} \frac{\operatorname{Re}(F_{k}(m_{\eps}^{\delta} - n_{\eps}^{\delta}) f_{k}^{*}(x))}{(1 + |k|^{2})^{\lambda}} \, dk, \quad x \in \mathbb{R}^{d}.
\end{align*}
The direct calculation yields the result that for $\theta = (t, m) \in [0, T] \times \mathcal{M}_{2}(\mathbb{R}^{d})$,

\[
\partial_{x}{\delta_{\mu}}\vartheta(\theta, x) = e^{-Lt}\nabla q(x), \quad \partial^2_{xx}{\delta_{\mu}}\vartheta(\theta, x) = e^{-Lt}\nabla^{2} q(x), \quad \delta_{\mu}\vartheta(\theta) = e^{-Lt}(q(x) + 2m(\R^d)),
\]
\[
|\nabla q|_l = \sup_{x \in \mathbb{R}^d} \frac{\frac{|x|}{\sqrt{1 + |x|^2}}}{1 + |x|} = \frac{\sqrt{2}}{4}, \quad 
|\nabla^2 q|_l = \sup_{x \in \mathbb{R}^d} \frac{ |\nabla^2 q(x)| }{ 1 + |x| } = \sqrt{d}, \quad |q|_l = \sup_{x \in \mathbb{R}^d} \frac{\sqrt{1 + |x|^2}}{1 + |x|} = 1,
\]
\[
\frac{\partial_{x}{\delta_{\mu}}\Phi(m^{\delta}_\eps)(\cdot)}{2\eps} = -\frac{\partial_{x}{\delta_{\mu}}\Psi(n^{\delta}_\eps)(\cdot)}{2\eps} = \nabla \kappa_{\eps}^{\delta}(\cdot), \quad \frac{\partial^2_{xx}{\delta_{\mu}}\Phi(m^{\delta}_\eps))(\cdot)}{2\eps} = -\frac{\partial^2_{xx}{\delta_{\mu}}\Psi(\nu^{*})(\cdot)}{2\eps} = \nabla^{2} \kappa_{\eps}^{\delta}(\cdot),
\]
\[
\frac{{\delta_{\mu}}\Phi(m^{\delta}_\eps)(\cdot)}{2\eps} = -\frac{{\delta_{\mu}}\Psi(n^{\delta}_\eps)(\cdot)}{2\eps} = \kappa_{\eps}^{\delta}(\cdot).
\]
For convenience, we denote by
\[
\begin{array}{ll}
\alpha_{\eps}^{\delta} &:=\bigg(\nabla\kappa_{\eps}^{\delta}+\delta e^{-Lt_{\eps}^{\delta}}\nabla q,\nabla^{2}\kappa_{\eps}^{\delta}+\delta e^{-Lt_{\eps}^{\delta}}\nabla^{2}q,\kappa_{\eps}^{\delta}+ \delta e^{-Lt_{\eps}^{\delta}}q + 2m_{\eps}^{\delta}(\R^d)\bigg), \\
\beta_{\eps}^{\delta} &:=\bigg(\nabla\kappa_{\eps}^{\delta}-\delta e^{-Ls_{\eps}^{\delta}}\nabla q,\nabla^{2}\kappa_{\eps}^{\delta}-\delta e^{-Ls_{\eps}^{\delta}}\nabla^{2}q,\kappa_{\eps}^{\delta}-\delta e^{-Ls_{\eps}^{\delta}}q - 2n_{\eps}^{\delta}(\R^d)\bigg).
\end{array}
\]
Then, the mapping $\theta\longmapsto(\partial_{x}{\delta_{\mu}}\vartheta,\partial^2_{xx}{\delta_{\mu}}\vartheta,\delta_{\mu}\vartheta)(\theta)$ is continuous, and it follows from the linearity and the definition of the closure of the jets that
\[
\bigg(\frac{1}{\eps}(t_{\eps}^{\delta}-s_{\eps}^{\delta}) - \delta L\vartheta(\theta_{\eps}^{\delta}),\alpha_{\eps}^{\delta}\bigg)\in{J}^{1,+}{u}_{h}(\theta_{\eps}^{\delta}),\quad\bigg(\frac{1}{\eps}(t_{\eps}^{\delta}-s_{\eps}^{\delta}) + \delta L\vartheta(\iota_{\eps}^{\delta}),\beta_{\eps}^{\delta}\bigg)\in{J}^{1,-}{v}(\iota_{\eps}^{\delta}).
\]
By the viscosity property of ${u}_{h}$, and ${v}$, one has that
\[
-\frac{1}{\eps}(t_{\eps}^{\delta}-s_{\eps}^{\delta})+\delta L\vartheta(\theta_{\eps}^{\delta})-G\bigg(\theta_{\eps}^{\delta},\alpha_{\eps}^{\delta}\bigg)\leq-h,\quad-\frac{1}{\eps}(t_{\eps}^{\delta}-s_{\eps}^{\delta})-\delta L\vartheta(\iota_{\eps}^{\delta})-G\bigg(\iota_{\eps}^{\delta},\beta_{\eps}^{\delta}\bigg)\geq 0,
\]
implying,
\[
h\leq G\bigg(\theta_{\eps}^{\delta},\alpha_{\eps}^{\delta}\bigg)-G\bigg(\iota_{\eps}^{\delta},\beta_{\eps}^{\delta}\bigg) - \delta L\vartheta(\theta_{\eps}^{\delta}) - \delta L\vartheta(\iota_{\eps}^{\delta}),
\]
and on the other hand, one can obtain the following estimation
\begin{align*}
&G\left(\theta_{\eps}^{\delta},\alpha_{\eps}^{\delta}\right)-G\left(\iota_{\eps}^{\delta},\beta_{\eps}^{\delta}\right)  - \delta L\vartheta(\theta_{\eps}^{\delta}) - \delta L\vartheta(\iota_{\eps}^{\delta})\\
\leq &G\left(\theta_{\eps}^{\delta},\nabla\kappa_{\eps}^{\delta},\nabla^{2}\kappa_{\eps}^{\delta},\kappa_{\eps}^{\delta}\right)-G\left(\iota_{\eps}^{\delta},\nabla\kappa_{\eps}^{\delta},\nabla^{2}\kappa_{\eps}^{\delta},\kappa_{\eps}^{\delta}\right)\\
+ &L_{G}\!\left(\int_{\mathbb{R}^{d}}1+|x|m_{\eps}^{\delta}(dx)\right)(C + 2m_{\eps}^{\delta}(\R^d))e^{-Lt_{\eps}^{\delta}}\delta
+L_{G}\!\left(\int_{\mathbb{R}^{d}}1+|x| n_{\eps}^{\delta}(dx)\right)(C + 2n_{\eps}^{\delta}(\R^d))e^{-Ls_{\eps}^{\delta}}\delta\\
&\quad - \delta Le^{-Lt_{\eps}^{\delta}} (\int_{\mathbb{R}^{d}}\sqrt{1+|x|^{2}}m_{\eps}^{\delta}(dx) + (m_{\eps}^{\delta})^2(\R^d))
- \delta Le^{-Ls_{\eps}^{\delta}} (\int_{\mathbb{R}^{d}}\sqrt{1+|x|^{2}}n_{\eps}^{\delta}(dx) + (n_{\eps}^{\delta})^2(\R^d))\\
\leq & \omega_{G}\!\left((1 + m_{\eps}^{\delta}(\R^d))(\frac{1}{\eps}d_{F}^{2}(\theta_{\eps}^{\delta},\iota_{\eps}^{\delta})+d_{F}(\theta_{\eps}^{\delta},\iota_{\eps}^{\delta}))\right)\left(\int_{\mathbb{R}^{d}}1+|x|(m_{\eps}^{\delta}+n_{\eps}^{\delta})(dx)\right) \\
&\quad + 2\left(L_{G}(C + 2m_{\eps}^{\delta}(\R^d))e^{-Lt_{\eps}^{\delta}}\delta - \frac{\delta L}{2} e^{-Lt_{\eps}^{\delta}}\right) \times\left(\int_{\mathbb{R}^{d}}\sqrt{1+|x|^{2}}m_{\eps}^{\delta}(dx) + (m_{\eps}^{\delta})^2(\R^d) \right) \\
&\quad + 2\left(L_{G}(C + 2n_{\eps}^{\delta}(\R^d))e^{-Ls_{\eps}^{\delta}}\delta - \frac{\delta L}{2} e^{-Ls_{\eps}^{\delta}}\right) \times\left(\int_{\mathbb{R}^{d}}\sqrt{1+|x|^{2}}n_{\eps}^{\delta}(dx) + (n_{\eps}^{\delta})^2(\R^d)\right),
\end{align*}
where the first inequality holds true by Assumption \ref{A.ham} (i), and the second one by Assumption \ref{A.ham} (ii). Letting $\eps$ go to 0, one obtains
\begin{align*}
h \leq & ~ 2\left(L_{G}(C + 2m^{\delta}(\R^d))e^{-Lt^{\delta}}\delta - \frac{\delta L}{2} e^{-Lt^{\delta}}\right) \times\left(\int_{\mathbb{R}^{d}}\sqrt{1+|x|^{2}}m^{\delta}(dx) + (m^{\delta})^2(\R^d)\right) \\
 & + 2\left(L_{G}(C + 2n^{\delta}(\R^d))e^{-Ls^{\delta}}\delta - \frac{\delta L}{2} e^{-Ls^{\delta}}\right) \times\left(\int_{\mathbb{R}^{d}}\sqrt{1+|x|^{2}}n^{\delta}(dx) + (n_{\eps}^{\delta})^2(\R^d)\right).
\end{align*}
Finally, according to the second part of the proof of Lemma \ref{compact}, $m^{\delta}(\R^d)$ and $n^{\delta}(\R^d)$ have a common upper bound $c_{\delta}$, then choose  \(L \geq 2 L_G (C + 2c_{\delta})\), one gets the desired contradiction.

\end{proof}

\begin{remark}
	\label{remark:auxiliary_function_modification}
	A critical distinction between our comparison principle and that established in \cite{Erhan He 2025} lies in the construction of the auxiliary function $\vartheta$, which is essential for penalization and compactness arguments in the doubling variables method.
	
	In the setting of \cite{Erhan He 2025}, which deals with the space of probability measures $\mathcal{P}_2(\mathbb{R}^d)$, the total mass of any measure is fixed at 1. This allows for the use of an auxiliary function of the form $\vartheta(t, \mu) = 1 + \int_{\mathbb{R}^d} |x|^2 \mu(dx)$. The boundedness of the total mass ensures that the sub-level sets of this function are compact under the weak convergence topology, a property heavily relies upon in their proofs.
	
	However, this framework is insufficient for the space $\mathcal{M}_2(\mathbb{R}^d)$ considered in the present work, where elements are finite non-negative measures with unbounded total mass. The lack of an upper bound on $m(\mathbb{R}^d)$ creates two significant technical obstacles that render the auxiliary function from \cite{Erhan He 2025} inapplicable:
	\begin{itemize}
		\item \textbf{Loss of Compactness:} The sub-level sets $\{\vartheta(\theta) \leq c\}$ are no longer guaranteed to be compact. A sequence of measures $\{m_n\}$ with uniformly bounded second moments but diverging total masses ($m_n(\mathbb{R}^d) \to \infty$) would satisfy $\vartheta(\theta_n) \leq c$ yet fail to have a weakly convergent subsequence in $\mathcal{M}_2(\mathbb{R}^d)$, as the limit would not be a finite measure. This breaks a fundamental step in the proof of the comparison principle.
		\item \textbf{Unbounded Integrals:} The estimates involving the Hamiltonian $G$ and the linear growth assumptions on the solutions inherently involve integrals with respect to the measure $m$. When $m(\mathbb{R}^d)$ is unbounded, these integrals can become uncontrolled, preventing the derivation of the necessary contradiction in the argument.
	\end{itemize}
	
	To overcome these challenges intrinsic to $\mathcal{M}_2(\mathbb{R}^d)$, we introduce a modified auxiliary function defined in \eqref{aux}:
	\[
	\vartheta(t, m) = e^{-L t} \left( \int_{\mathbb{R}^d} \sqrt{1 + |x|^2}  m(dx) + m^2(\mathbb{R}^d) \right).
	\]
	The key innovation is the inclusion of the term $m^2(\mathbb{R}^d)$. This term grows quadratically with the total mass, effectively penalizing sequences with unbounded mass and ensuring that the sub-level sets $\{\vartheta(\theta) \leq c_1 + c_2 m(\mathbb{R}^d)\}$ are indeed compact, as proven in Lemma \ref{compact}. Furthermore, this modification allows us to maintain control over the various integral terms throughout the doubling variable argument, ultimately enabling us to establish the comparison principle in the more general space $\mathcal{M}_2(\mathbb{R}^d)$. This adaptation is a crucial technical contribution of our work, extending the applicability of the viscosity solution theory to PDEs associated with branching processes where the population size is dynamic and uncontrolled.
	\end{remark}

\section{Application : a branching control problem}\label{section4}
\label{sec:Formulation}

	Let us first introduce a controlled McKean--Vlasov branching diffusion process as solution of a branching stochastic differential equation (SDE), 
	and then derive some a priori estimates for the associated controlled processes.
	Throughout the paper, we fix 
	{ a constant $C_0> 0$,}
	together with a closed convex subset $A \subset \R^n$, which serves as the control space, i.e. the control processes take values in $A$.

\subsection{The controlled McKean--Vlasov branching diffusions} 
Let us first describe the McKean--Vlasov branching diffusion process in a heuristic way.  
	The coefficients of the controlled branching diffusion are given by
$$
	\big( b,\sigma, \gamma, (p_{\ell})_{\ell \in \N} \big)
	:
	[0,T] \x\R^d\x\Mc_2(\R^d) \x A \longrightarrow \R^d\x\R^{d\x d} \x  [0, \gammab] \x [0,1]^{\N}, 
$$
where $\gammab > 0$ is a fixed constant.
Namely, $b$ and $\sigma$ are the drift and diffusion coefficient for the movement of each particle, $\gamma$ is the death rate, 
and $(p_{\ell})_{\ell \in \N}$ is the probability mass function of the progeny distribution. 
In particular, it holds that $p_{\ell}(\cdot) \in [0,1]$ for each $\ell \in \N$, and $\sum_{\ell \in \N} p_{\ell}(\cdot) = 1$.
Let us also define a partition $(I_{\ell}(\cdot))_{\ell \in \N}$ of $[0,1)$ by
$$
	I_{\ell} (\cdot)
	~:=~
	\Big[
		\sum_{i=0}^{\ell -1}p_i (\cdot), ~\sum_{i=0}^{\ell} p_i (\cdot) 
	\Big),
	~~\mbox{for each}~
	\ell \in \N.
$$

Let $(\Om, \Fc, \F, \P)$ be a filtered probability space, with filtration $\F = (\Fc_t)_{t \ge 0}$, 
	equipped with a family of $d$-dimensional Brownian motion $(\Wk)_{k\in\K}$ and Poisson random measures $(\Qk(ds,dz))_{k\in\K}$ on $[0,T] \x [0, \gammab] \x [0,1]$ with Lebesgue intensity measure $ds\x dz$.
	The random elements $(\Wk)_{k\in\K}$ and $(\Qk(ds,dz))_{k\in\K}$ are mutually independent.
	Moreover, $\Fc_0$ is rich enough to support a $E$-valued random variable $\xi$ of any distribution.

	\vspace{0.5em}

	Let $K_s$ denotes the collection of all labels of particles alive at time $s \in [t,T]$. We here introduce the dynamic of each alive particle $k\in K_s$, which is given by the controlled SDE:
\begin{equation}
	d\Xk_s ~=~ b(s, \Xk_s,\mu_s, \alpha^k_s)ds+\sigma(s, \Xk_s,\mu_s,\alpha^k_s)d\Wk_s,
\end{equation}
with $\mu_s\in\Mc(\R^d)$ being defined by
	\begin{equation}
	\<\varphi,\mu_s\> 
	~:= ~
	\E\Big[
		\sum_{k\in K_s}\varphi(\Xk_s)	
	\Big],
	~~\mbox{for all}~
	\varphi\in C_b(\R^d),
	\end{equation}
as well as a notion of a closed-loop control, that is, the control depends on the position of the alive particles  by
\begin{equation*} 
	\alpha^k_s := \alpha(s,X^k_s),
	~~s \in [t,T], ~k \in \K,
\end{equation*}
with some measurable function $\alpha : [0,T] \x \R^d \longrightarrow A$.

\begin{definition}[Admissible controls]
	{
	Let $C_0 > 0$ be the fixed constant given at the beginning of Section \ref{sec:Formulation},}
	we define $\Ac$ as the space of all control processes $(\alpha^k)_{k \in \K}$ such that
	$\alpha^k_s = \alpha(s, \Xk_s)$ for some $\alpha:[0,T]\x\R^d \longrightarrow A$,
	where $\alpha(\cdot)$ satisfies the following conditions:
	\begin{align*}
		\big|\alpha(s,x) - \alpha(s,x')\big|
		~\leq~
		C_0 |x-x'|,
		~~\mbox{and}~~
		\big|\alpha(s,x)\big|
		~\leq~
		C_0 \big(1 + | x |\big),
	\end{align*} 
	for all $(s,x,x')\in[t,T]\x\R^d\x\R^d $.
\end{definition}
\vspace{0.5em}
Now for any initial time $t \in [0,T]$, and $E$-valued $\Fc_0$-random variable $\xi$ such that
	\begin{align}\label{assumption xi}
	\E[\<\xi, 1\>]
	~<~
	\infty,
	~~
	\text{and}~~
	\E[\<\xi, |\cdot|^2\>]
	~<~
	\infty,
	\end{align}	
	we consider a controlled branching diffusion process with an initial state $\xi$.
	It is represented as a $E$-valued process $(Z_s)_{s \in [t,T]}$ given by
	\begin{equation} \label{eq:Xk2Z}
		Z_s ~:=~ \sum_{k\in K_s}\delta_{(s,\Xk_s)}, ~~s \in [t,T].
	\end{equation}
	In particular, the initial condition of the branching process is given as
	$$
		Z_t ~=~ \xi ~=~ \sum_{k \in K_t} \delta_{(k, X^k_t)}.
	$$

	The branching process induces a flow of marginal measures $(\mu_s)_{s\in[t,T]}$, with $\mu_s\in\Mc(\R^d)$ being defined by
	\begin{equation}\label{eq:def_mu_t}
	\<\varphi,\mu_s\> 
	~:= ~
	\E\Big[
		\sum_{k\in K_s}\varphi(\Xk_s)	
	\Big],
	~~\mbox{for all}~
	\varphi\in C_b(\R^d).
	\end{equation}
Denote by $\Lc$ the infinitesimal generator of the diffusion with coefficient $(b,\sigma)$, \ie, for all $(s, x ,m)\in[t,T]\x\R^d\x\Mc(\R^d)$ and $\varphi \in C^2_b(\R^d,\R)$,
\begin{equation} \label{eq:def_Lc}
	\Lc \varphi (s, x, m, a)
	~:=~
	\frac{1}{2}\Tr\left(\sigma\sigma^{\top} (s, x,m, a)\nabla^2_x \varphi( x)\right) + b(s,x,m, a)\cdot \nabla_x \varphi(x),
\end{equation}
where we denote by $\nabla_x, \nabla^2_x$ the gradient and Hessian operators acting on the space variables respectively.
Then the McKean--Vlasov branching diffusion,
	with initial condition $\xi$ and control $\alpha \in \Ac$,
	can be characterized as the solution to the following SDE: for all $f:=(f^k)_{k\in\K}\in C^2_b (\K\x\R^d, \R),$ $s\in[t,T],$
\begin{multline} \label{eq:SDE_MKVB}
\< Z_s, f \>
= 
\<\xi ,f\>
+
\int_t^s \sum_{k\in K_u}\Lc f^k(u, \Xk_u, \mu_u,\alpha^k_u)du
+
\int_t^s \sum_{k\in K_u}
\nabla_x f^k(\Xk_u)\sigma(u,\Xk_u,\mu_u,\alpha^k_u)
d\Wk_u \\		
+\int_{(t,s]\x [0,\gammab] \x [0,1]}
\sum_{k\in K_{u-}}\sum_{ \ell \geq 0}
\left(\sum_{i=1}^{ \ell} 
f^{ki}-f^k\right)
(\Xk_u)\1_{[0,\gamma(u,\Xk_u,\mu_u,\alpha^k_u)]\x I_{\ell}(u,\Xk_u,\mu_u,\alpha^k_u)}(z)\Qk(du,dz),
\end{multline}
where we recall that the interaction term $\mu_u$ is defined in \eqref{eq:def_mu_t}.

\begin{definition}\label{Solution Def}
	Let $t \in [0,T]$, and $\xi$ be a $E$-valued {$\Fc_t$-random variable}, and $(\alpha^k)_{k \in \K} \in \Ac$.
	A solution to SDE \eqref{eq:SDE_MKVB} with initial data $(t, \xi)$ and control $(\alpha^k)_{k \in \K}$ is a $E$-valued $\F$-adapted càdlàg process $Z := (Z_s)_{s\in[t,T]}$ such that \eqref{eq:SDE_MKVB} holds true for all $s\in[t,T]$. 
\end{definition}

The coefficients functions will be assumed to satisfy the following conditions.

\begin{assumption}\label{A.1}
	\noindent $\mathrm{(i)}$ The coefficient functions $b,\sigma,\gamma$ are Lipschitz in $(x,m,a)$ in the sense that, there exists a constant $C>0$ such that
	\begin{align*}
		\big|(b,\sigma,\gamma)(s, x ,m, a) - (b,\sigma,\gamma)(s', x',m', a') \big| 
		~\leq~
		C
		\big(|s-s'| + |x-x'| + \rho_F(m, m')  + |a-a'|\big),
	\end{align*}
	for all $(s,s',x, x', m, m', a, a') \in [t,T] \x [t,T] \x \R^d \x \R^d \x \Mc_2(\R^d) \x \Mc_2(\R^d) \x A \x A$.

	\vspace{0.5em}

	\noindent $\mathrm{(ii)}$ The coefficient function $b$  and $\sigma$ are bounded both uniformly in $s\in[t,T]$.

	\vspace{0.5em}	
	\noindent $\mathrm{(iii)}$ The function $\sum_{ \ell \ge 0} \ell p_{\ell}$ is uniformly bounded by $M_1>0$. And there exist positive constants $(C_{\ell})_{\ell \ge 0}$ such that $M := \sum_{\ell \in\N} \ell C_{\ell} < \infty$ and 
	\begin{align*}
		\big| p_\ell (s, x,m, a)-p_\ell (s, x',m',a') \big| 
		~\leq~ 
		C_{\ell} \big(|s-s'| + |x-x'| + \rho_F(m, m') + |a-a'|\big),
	\end{align*}
	for all $(s, s', x, x', m, m',a,a') \in [t,T] \x [t,T] \x \R^d \x \R^d \x \Mc_2(\R^d) \x \Mc_2(\R^d) \x A \x A$ and $\ell \in \N$.

	\vspace{0.5em}

	\noindent $\mathrm{(iv)}$ { For any $(s, m,a) \in [t,T] \x \Mc_2(\R^d) \x A$, $x \mapsto \sigma^{\top}(s,x,m,a)\sigma(s,x,m,a) \in H^{\lambda+d/2+1}(\mathbb{R}^{d})$, $x \mapsto b(s,x,m,a) \in H^{\lambda}(\mathbb{R}^{d})$, $x \mapsto \sum_{\ell\geq0}(\ell - 1)p_\ell\big(s,x,m,a\big) \in H^{\lambda}(\mathbb{R}^{d})$, where $\lambda$ is given by \eqref{eq:lambda_def}.
	}

	\vspace{0.5em}

	\noindent $\mathrm{(v)}$ The diffusion matrix is uniformly non-degenerate. Namely, there exists a constant $\delta > 0$ such that for all $(s, x, m, a) \in [t, T] \times \mathbb{R}^d \times \mathcal{M}_2(\mathbb{R}^d) \times A$ and for all $\xi \in \mathbb{R}^d$:
$$\xi^\top (\sigma\sigma^\top(s, x, m, a)) \xi \ge \delta |\xi|^2$$
	
\end{assumption}

\subsection{A priori estimations}
We next prove that the controlled branching diffusion SDE has a unique solution and then provide some a priori estimates.
	Let us introduce a function $h:\Pc_1 (E) \longrightarrow \Mc(\R^d)$ by, for all $\nu \in \Pc_1 (E)$,
	\begin{equation}\label{definition h}
		\big\langle h(\nu), \varphi \big \rangle 
		:= \int_E \mathrm{Id}_{\varphi}(e)\nu(de),
		~~\mbox{for all}~\varphi\in C_b(\R^d),
	\end{equation}
	so that the marginal measure $\mu_s$ in \eqref{eq:def_mu_t} can be equivalently defined as $\mu_s :=h\big(\Lc(Z_s)\big)$.

\begin{lemma}\label{lemma continuity}
	The function $h:\Pc_1(E)\longrightarrow \Mc(\R^d)$ as defined in \eqref{definition h} is Lipschitz continuous in the sense that, for some constant $C> 0$,
	\begin{align*}
		\rho_F\big(h(\mu_1), h(\mu_2)\big)
		~\leq~
		C\Wc_1(\mu_1, \mu_2),~~\text{for all }\mu_1, \mu_2\in\Pc(E).
	\end{align*}
\end{lemma}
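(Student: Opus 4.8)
The plan is to pass to the Fourier side: bound each coefficient $F_n\big(h(\mu_1)-h(\mu_2)\big)$, $n\in\R^d$, by a constant times $(1+|n|)\,\Wc_1(\mu_1,\mu_2)$, and then integrate against the weight $(1+|n|^2)^{-\lambda}$ in \eqref{f-metric}. The first step is to rewrite $F_n\circ h$ as an integral over $E$: applying the defining relation \eqref{definition h} to the bounded continuous test function $f_n$ (splitting into real and imaginary parts to stay with real-valued tests) gives
\[
F_n\big(h(\mu)\big)=\big\langle f_n,h(\mu)\big\rangle=\int_E \mathrm{Id}_{f_n}(e)\,\mu(de),\qquad \mathrm{Id}_{f_n}(e)=\sum_{k\in K}f_n(x^k)\ \text{ for }\ e=\sum_{k\in K}\delta_{(k,x^k)}.
\]
Writing $\Psi_n:=\mathrm{Id}_{f_n}$, one has $|\Psi_n(e)|\le(2\pi)^{-d/2}\#K=(2\pi)^{-d/2}d_E(e,e_0)$; hence for $\mu_1,\mu_2\in\Pc_1(E)$ (the domain of $h$) both integrals are finite and $F_n\big(h(\mu_1)-h(\mu_2)\big)=\int_E\Psi_n\,d\mu_1-\int_E\Psi_n\,d\mu_2$.

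Next I would establish the key quantitative estimate: $\Psi_n$ is Lipschitz for the metric $d_E$ of \eqref{eq:def_d_E} with constant $(2\pi)^{-d/2}(|n|\vee 2)$. Given $e_1=\sum_{k\in K_1}\delta_{(k,x^k)}$ and $e_2=\sum_{k\in K_2}\delta_{(k,y^k)}$, split
\[
\Psi_n(e_1)-\Psi_n(e_2)=\sum_{k\in K_1\cap K_2}\!\big(f_n(x^k)-f_n(y^k)\big)+\sum_{k\in K_1\setminus K_2}\!f_n(x^k)-\sum_{k\in K_2\setminus K_1}\!f_n(y^k).
\]
On the shared labels use $|f_n(x)-f_n(y)|\le(2\pi)^{-d/2}\big(|n|\,|x-y|\wedge 2\big)\le(2\pi)^{-d/2}(|n|\vee 2)\,(|x-y|\wedge 1)$, and on the remaining labels use $\|f_n\|_\infty\le(2\pi)^{-d/2}$; matching the three pieces with the two terms of $d_E$ yields $\big|\Psi_n(e_1)-\Psi_n(e_2)\big|\le(2\pi)^{-d/2}(|n|\vee 2)\,d_E(e_1,e_2)$.

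Then, for any coupling $\pi\in\Lambda(\mu_1,\mu_2)$,
\[
\Big|\int_E\Psi_n\,d\mu_1-\int_E\Psi_n\,d\mu_2\Big|\le\int_{E\times E}\!\big|\Psi_n(e_1)-\Psi_n(e_2)\big|\,\pi(de_1,de_2)\le(2\pi)^{-d/2}(|n|\vee 2)\!\int_{E\times E}\!d_E\,d\pi,
\]
and minimizing over $\pi$ gives $\big|F_n\big(h(\mu_1)-h(\mu_2)\big)\big|\le(2\pi)^{-d/2}(|n|\vee 2)\,\Wc_1(\mu_1,\mu_2)$. Inserting this into \eqref{f-metric},
\[
\rho_F^2\big(h(\mu_1),h(\mu_2)\big)\le(2\pi)^{-d}\Big(\int_{\R^d}\frac{(|n|\vee 2)^2}{(1+|n|^2)^\lambda}\,dn\Big)\Wc_1^2(\mu_1,\mu_2),
\]
and the integral is finite because $\lambda\ge\lfloor d/2\rfloor+3>d/2+1$ by \eqref{eq:lambda_def}; taking $C:=(2\pi)^{-d/2}\big(\int_{\R^d}(|n|\vee 2)^2(1+|n|^2)^{-\lambda}\,dn\big)^{1/2}$ finishes the argument.

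I expect the Lipschitz bound for $\Psi_n$ to be the only genuine obstacle. Since $\Psi_n$ is neither bounded nor Lipschitz for any purely spatial reason, the argument must exploit both components of $d_E$ simultaneously: the displacement term controls the matched particles through the modulus of continuity of $f_n$ — which is exactly where the growth factor $|n|$ enters — while the counting term controls the created or killed particles through the uniform bound on $f_n$. Everything else is a routine coupling argument plus the convergence of a Fourier-type integral, for which the specific choice of $\lambda$ in \eqref{eq:lambda_def} is precisely what is needed.
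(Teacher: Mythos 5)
Your proposal is correct and follows essentially the same route as the paper: pass to Fourier coefficients, exploit both the bound $\|f_n\|_\infty\le(2\pi)^{-d/2}$ and the modulus of continuity of $f_n$ to match the two pieces of $d_E$, optimize over couplings, and integrate against the $(1+|n|^2)^{-\lambda}$ weight. Your packaging of the key step as a Lipschitz estimate for $\Psi_n$ with constant $(2\pi)^{-d/2}(|n|\vee 2)$ is in fact slightly more careful than the paper's written intermediate bound (which uses the factor $|n|$ in front of $|X^k-Y^k|\wedge 1$, not valid for small $|n|$, before switching to $|n|+1$, which also fails to dominate $|n|\vee 2$ when $|n|<1$); the final conclusion with a generic constant $C$ is unaffected, but your version is cleaner.
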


\begin{proof}
    Let us consider two $E$-valued random variables $Z_1,Z_2$ such that $Z_1 = \sum_{k\in K_1}\delta_{(k,X^k)}$ and $Z_2 = \sum_{k\in K_2}\delta_{(k,Y^k)}$ with $\mu_1 = \mathcal{P}\circ Z_1^{-1}$ and $\mu_2 = \mathcal{P}\circ Z_2^{-1}$.
\[
F_{n}\left(h\left(\mu_{1}\right)\right)
= (2\pi)^{-\frac{d}{2}}\int e^{in \cdot x}  dh(\mu_{\mathrm{1}})(x) =
(2\pi)^{-\frac{d}{2}}\mathbb{E_{(\mu_{\mathrm{1}})}}\left[\sum_{k\in K_{1}}e^{in\cdot X^{k}}\right].
\]
Consider $\gamma\in\Lambda (\mu_1,\mu_2)\subset\mathcal{P}(E\times E)$, and apply Jensen's inequality, we get
\[
\left|F_{n}\left(h\left(\mu_{1}\right)-h\left(\mu_{2}\right)\right)\right|
\leq
(2\pi)^{-\frac{d}{2}}\mathbb{E_\gamma}\left|\sum_{k\in K_{1}}e^{in\cdot X^{k}} - \sum_{k\in K_{2}}e^{in\cdot Y^{k}}\right|.
\]
Using triangle inequality and $\left|e^{i na}-e^{inb}\right| = 2\left|sin\frac{na-nb}{2}\right| \leq\min (2,|n||a-b|)$,

\begin{align*}
\left|F_{n}\left(h\left(\mu_{1}\right)-h\left(\mu_{2}\right)\right)\right| 
& \leq  (2\pi)^{-\frac{d}{2}}\mathbb{E_\gamma}\left| \sum_{k\in K_1\cap K_2}\left|e^{i n \cdot X^{k}}-e^{i n \cdot Y^{k}}\right| + \sum_{k \in K_{1} \backslash K_{2}} 1 + \sum_{k \in K_{2} \backslash K_{1}} 1 \right|\\
 & \leq (2\pi)^{-\frac{d}{2}}\mathbb{E_\gamma}\left| |n| \sum_{k \in K_{1} \cap K_{2}}\left(\left|X^{k}-Y^{k}\right| \wedge 1\right) + \#\left(K_{1} \triangle K_{2}\right) \right|.
\end{align*}
Therefore, taking infimum over all $\gamma\in\Lambda (\mu_1,\mu_2)$ , \\$\left|F_{n}\left(h\left(\mu_{1}\right)-h\left(\mu_{2}\right)\right)\right| \leq (2\pi)^{-\frac{d}{2}}|n+1| \mathbb{E_\gamma}\left[d_{E}\left(Z_{1}, Z_{2}\right)\right]= (2\pi)^{-\frac{d}{2}}|n+1|\mathcal{W}_{1}\left(\mu_{1}, \mu_{2}\right)$. Hence:

\[
\rho_{F}^{2}\left(h\left(\mu_{1}\right), h\left(\mu_{2}\right)\right)
\leq \mathcal{W}_{1}^{2}\left(\mu_{1}, \mu_{2}\right) \int_{\R^d} \frac{(2\pi)^{-d}|n+1|^{2}}{\left(1+|n|^{2}\right)^{\lambda}} dn 
= C \mathcal{W}_{1}^{2}\left(\mu_{1}, \mu_{2}\right).
\]

\end{proof}

\begin{proposition}
	Let Assumption \ref{A.1} hold true, $(t,\xi)$ be the initial condition with $t \in [0,T]$ and $\xi$ being a $\Fc_t$-random variable such that $\E[\langle \xi, 1\rangle] < \infty$, 
	and $(\alpha^k)_{k \in \K} \in \Ac$.
	Then there exists a unique solution of the controlled branching SDE \eqref{eq:SDE_MKVB} in the sense of Definition \ref{Solution Def}.
\end{proposition}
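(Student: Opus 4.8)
The plan is a decoupling/fixed-point argument. Observe first that every element of $E$ is a finite configuration, and since $\E[\langle\xi,1\rangle]<\infty$ the candidate population is dominated by a continuous-time branching process with bounded mean offspring; hence a.s.\ only finitely many branching events occur on $[t,T]$ and $\E[\langle Z_s,1\rangle]\le\E[\langle\xi,1\rangle]\,e^{\gammab M_1(s-t)}$ for any putative solution. For a \emph{frozen} measure flow $\mu=(\mu_s)_{s\in[t,T]}$ with $\sup_s\mu_s(\R^d)<\infty$, substituting the closed-loop control $\alpha^k_u=\alpha(u,X^k_u)$ into \eqref{eq:SDE_MKVB} gives the SDE of an ordinary (time-inhomogeneous) controlled branching diffusion: by Assumption \ref{A.1}(i)--(ii) and \eqref{eq:cond_alpha} the movement coefficients $x\mapsto b(u,x,\mu_u,\alpha(u,x))$ and $x\mapsto\sigma(u,x,\mu_u,\alpha(u,x))$ are bounded and Lipschitz in $x$, the death rate stays in $[0,\gammab]$ and is Lipschitz in $x$, and $x\mapsto p_\ell(u,x,\mu_u,\alpha(u,x))$ is Lipschitz with constant $C_\ell$. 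I would then construct the unique strong solution $Z^\mu$ pathwise: order the atoms of $(Q^k)$ that actually trigger a branching as $t<\tau_1<\tau_2<\cdots\le T$ (a.s.\ finitely many), on each interval $[\tau_i,\tau_{i+1})$ solve the finitely many ordinary SDEs governing the alive particles (classical strong existence and uniqueness under bounded Lipschitz coefficients), and update the configuration deterministically at each $\tau_{i+1}$; uniqueness on each interval gives uniqueness on $[t,T]$.

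Next I would run a Picard scheme directly at the level of the processes, which sidesteps any completeness issue for a metric on measure flows. Starting from an arbitrary bounded-mass flow $\mu^{(0)}$ (e.g.\ the constant flow $\mu^{(0)}_s\equiv h(\mathrm{Law}(\xi))$), inductively set $\mu^{(n)}_s:=h(\mathrm{Law}(Z^{(n)}_s))$ and let $Z^{(n+1)}$ be the frozen solution with interaction flow $\mu^{(n)}$. To compare consecutive iterates, couple $Z^{(n+1)}$ and $Z^{(n)}$ by driving them with the same Brownian motions $(W^k)$ and Poisson measures $(Q^k)$. Lemma \ref{lemma continuity} gives
\[
\rho_F\big(\mu^{(n)}_s,\mu^{(n-1)}_s\big)\;\le\;C\,\Wc_1\big(\mathrm{Law}(Z^{(n)}_s),\mathrm{Law}(Z^{(n-1)}_s)\big)\;\le\;C\,\E\big[d_E(Z^{(n)}_s,Z^{(n-1)}_s)\big].
\]
When a label $k$ is alive in both coupled branches, $X^{k,n+1}$ and $X^{k,n}$ are driven by the same $W^k$ with coefficients differing only through $\mu^{(n)}$ versus $\mu^{(n-1)}$, so by It\^o/Burkholder--Davis--Gundy and the Lipschitz dependence of $(b,\sigma)$ on $(x,m)$ (measured in $\rho_F$) the displacement part of $d_E$ is controlled by $\int_t^s\big(\E[d_E(Z^{(n+1)}_u,Z^{(n)}_u)]+\rho_F(\mu^{(n)}_u,\mu^{(n-1)}_u)\big)\,du$; the label-mismatch part $\E[\#(K^{(n+1)}_s\triangle K^{(n)}_s)]$ is bounded by the expected number of mismatched branching atoms, whose intensity at particle $k$ is controlled, via Assumption \ref{A.1}(i),(iii) and the summability $M=\sum_\ell\ell C_\ell<\infty$, by $C\big(\rho_F(\mu^{(n)}_u,\mu^{(n-1)}_u)+|X^{k,n+1}_u-X^{k,n}_u|\big)$ plus the discrepancy already accumulated. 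Writing $a_n(s):=\sup_{u\in[t,s]}\E[d_E(Z^{(n+1)}_u,Z^{(n)}_u)]$ and noting $a_n(t)=0$, a Gronwall argument yields
\[
a_n(s)\;\le\;C\int_t^s a_{n-1}(u)\,du,\qquad\text{hence}\qquad a_n(T)\;\le\;\frac{(C(T-t))^n}{n!}\,a_0(T).
\]
Thus $(Z^{(n)})$ is Cauchy in the sense that $\sup_s\E[d_E(Z^{(n+1)}_s,Z^{(n)}_s)]\to0$ summably; its limit $Z$ has a well-defined marginal flow $\mu_s=h(\mathrm{Law}(Z_s))$, and passing to the limit in the frozen SDEs shows $Z$ solves \eqref{eq:SDE_MKVB}. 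Uniqueness follows from the same coupling/Gronwall estimate applied to two hypothetical solutions.

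I expect the genuine obstacle to be the coupling of the branching mechanisms in the contraction step. Unlike the classical (non-branching) McKean--Vlasov setting, one cannot track particle displacements alone, since the two coupled populations may branch at different instants and thus carry different label sets; one must couple the Poisson random measures $(Q^k)$ so that branching events align as much as possible, and bound the probability of a mismatched branching of particle $k$ on a small time increment by the Lebesgue measure of the symmetric difference of the two acceptance regions $[0,\gamma]\times I_\ell$, estimated through the Lipschitz-in-$m$ bounds of Assumption \ref{A.1}(i),(iii) together with $M<\infty$ (to sum over the unbounded number of possible offspring $\ell$). Integrating this intensity against the finite-mean population and feeding it into the Gronwall loop is the technical heart; the remaining pieces are standard SDE estimates, and the closed-loop feature enters only through the elementary fact that composing the Lipschitz, linearly growing $\alpha$ with the bounded Lipschitz coefficients preserves boundedness and Lipschitz continuity in $x$. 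This mirrors the well-posedness arguments of \cite{Claisse Kang Tan 2024, CaoRenTan}.
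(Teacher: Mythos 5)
Your proposal is correct in its overall structure, but it re-derives from scratch the well-posedness result that the paper simply \emph{cites}. The paper's entire proof is a two-line reduction: by Lemma~\ref{lemma continuity} the map $h$ is Lipschitz from $(\Pc_1(E),\Wc_1)$ to $(\Mc_1(\R^d),\rho_F)$, and since $\alpha$ is Lipschitz with linear growth, the composed closed-loop coefficients $(x,m)\mapsto(b,\sigma,\gamma,p_\ell)(t,x,h(m),\alpha(t,x))$ are Lipschitz with linear growth, so Theorem~2.3 of Claisse--Kang--Tan~\cite{Claisse Kang Tan 2024} applies directly. What you lay out — pathwise construction for a frozen flow by ordering the finitely many accepted branching atoms, a Picard iteration on the marginal measure flow, and a coupled Gronwall contraction with the symmetric-difference/Poisson-thinning estimate controlling the label mismatch — is precisely the internal machinery of that cited theorem, and you correctly identify the genuine technical heart (aligning the acceptance regions $[0,\gamma]\times I_\ell$ between coupled iterates and summing the mismatch intensity via $M=\sum_\ell \ell C_\ell<\infty$). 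Your route buys self-containedness; the paper's route buys brevity and avoids re-proving what is already established. Two small points to tighten if you were to write yours out in full: (i) the Gronwall recursion you state, $a_n(s)\le C\int_t^s a_{n-1}(u)\,du$, actually comes after absorbing an $a_n$ term inside the integral by a first application of Gronwall (the displacement estimate feeds back $\E[d_E(Z^{(n+1)}_u,Z^{(n)}_u)]$ itself), so the constant should be $Ce^{CT}$; (ii) because $d_E$ is built from $|x^k-y^k|\wedge 1$, the BDG/Lipschitz displacement estimate must be run with the truncated distance, which is fine precisely because $b,\sigma$ are bounded by Assumption~\ref{A.1}(ii), but that observation deserves a sentence.
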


\begin{proof}
	We will apply the well-posedness results of the McKean-Vlasov branching SDE in Claisse, Kang and Tan \cite[Theorem 2.3]{Claisse Kang Tan 2024}.
	First, by the Lipschitz continuity of $h:\Pc(E)\longrightarrow\Mc(\R^d)$ in Lemma \ref{lemma continuity} as well as the Lipschitz property of the closed-loop control,
	it follows that $(x,m)  \longmapsto \big(b, \sigma, \gamma, p_{\ell} \big)(t, x, h(m), \alpha(x))$ is also Lipschitz continuous.
	Similarly, one can check the linear growth condition of $x \longmapsto \big(b, \sigma)(t,x, h(m), \alpha(x))$ 
	as required in the \cite[Theorem 2.3]{Claisse Kang Tan 2024}.
\end{proof}

\subsection{The control problem}\label{sec: control problem}

We now introduce a closed-loop McKean-Vlasov branching process control problem, inspired by the classical closed-loop McKean-Vlasov control problem in \cite{Pham Wei 2018}. We first introduce the cost functional and the value function of our control problem as follow.

\subsubsection{The value function}

	Let functions $L: [0,T] \times \mathbb{R}^d \times \Mc_2(\R^d) \times A \rightarrow \R$, $g: \mathbb{R}^d \times \Mc_2(\R^d) \rightarrow \mathbb{R}$ be
	the cost function. 
	We assume the following conditions.

	\begin{assumption}\label{A.3}
		\noindent $\mathrm{(i)}$ For all $(s,x,m,a)\in [0,T]\x\Mc_2(\R^d)\x A$, there exist constants $C_L, C_g$ such that
		\begin{align*}
			\big|L(s, x ,m, a) \big| 
			~\leq~
			C_L,\quad
			\big|g( x ,m) \big| 
			~\leq~
			C_g.
		\end{align*}
		\noindent $\mathrm{(ii)}$  $L$ is Lipschitz in $(t,x,m,a)$ in the sense that, there exists a constant $L_L>0$ such that
		\begin{align*}
			\big|L(s, x ,m, a) - L(s', x',m', a') \big| 
			~\leq~
			L_L
			\big( |s-s'|+|x-x'| + \rho_F(m, m')  + |a-a'|\big),
		\end{align*}
		for all $(s, s',x, x', m, m', a, a') \in [0,T] \x [0,T] \x \R^d \x \R^d \x \Mc_2(\R^d) \x \Mc_2(\R^d) \x A \x A$.\\
		\noindent $\mathrm{(iii)}$ For any $(a,s,m)$, the map $x \mapsto L(s,x,m,a)$ lies in $H^{\lambda}(\mathbb{R}^{d})$.
	\end{assumption}

	Let $t \in [0,T]$, $\xi$ be a $E$-valued $\Fc_t$-random variable and $\alpha\in\Ac$, 
	we denote by 
	$$
		Z^{t,\xi,\alpha}_s := \sum_{k\in K^{t,\xi,\alpha}_s}\delta_{(k,X^{k,t,\xi,\alpha}_s)}, s\in [t,T],
	$$
	the unique solution of \eqref{eq:SDE_MKVB}. 
	The cost function associated with the McKean--Vlasov branching diffusion process is given by
\begin{align}\label{eq:cost}
	J(t,\xi,\alpha)
	~:=&~
	\E\bigg[\int_t^T 
		\sum_{k\in K^{t,\xi,\alpha}_s}L\big(s,X^{k,t,\xi,\alpha}_s,\mu^{t,\xi, \alpha}_s,\alpha_s(X^{k,t,\xi,\alpha}_s)\big)	
		ds
	+
	\sum_{k\in K^{t,\xi,\alpha}_T}g(X^{k,t,\xi,\alpha}_T,\mu^{t,\xi, \alpha}_T)
	\bigg] \nonumber \\
	~=&~
	\int_t^T \big\<L\big(s, \cdot, \mu^{t,\xi, \alpha}_s, \alpha_s(\cdot)\big),\mu^{t,\xi, \alpha}_s\>ds
	+
	\big\<g(\cdot, \mu^{t,\xi, \alpha}_T), \mu^{t,\xi, \alpha}_T\big>,
\end{align}
where $\mu^{t,\xi, \alpha}_s$ is defined for any $s\in[t,T]$ and for any $\varphi\in C_b(\R^d)$,
\begin{align}\label{eq:mean measure xi}
	\<\varphi,\mu^{t,\xi, \alpha}_s\>
	:=
	\mathbb{E}\bigg[\sum_{k \in K^{t,\xi,\alpha}_s} \varphi(X^{k,t,\xi,\alpha}_s)\bigg].
\end{align}

\begin{proposition}[{\cite[Proposition 3.2]{Claisse Kang Lan Tan 2025}}]
	Let Assumptions \ref{A.1} and \ref{A.3} hold true, then the cost functional $J$ in \eqref{eq:cost} is well-defined and finite. 
\end{proposition}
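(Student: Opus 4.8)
The plan is to reduce the finiteness of $J(t,\xi,\alpha)$ to an a priori bound on the expected population size $s\mapsto\E\big[\langle Z^{t,\xi,\alpha}_s,1\rangle\big]$. Since Assumption~\ref{A.3}(i) gives $|L|\le C_L$ and $|g|\le C_g$, one has the pathwise bounds $\big|\sum_{k\in K^{t,\xi,\alpha}_s}L(s,X^k_s,\mu_s,\alpha_s(X^k_s))\big|\le C_L\,\langle Z_s,1\rangle$ and $\big|\sum_{k\in K^{t,\xi,\alpha}_T}g(X^k_T,\mu_T)\big|\le C_g\,\langle Z_T,1\rangle$, so that once $\sup_{s\in[t,T]}\E[\langle Z_s,1\rangle]<\infty$ the expectation defining $J$ is finite, and Fubini's theorem (legitimised by this bound together with the progressive measurability of $Z$) allows the interchange of $\E$ and $\int_t^T$ that produces the second, measure-valued form of $J$ via \eqref{eq:mean measure xi}.

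To obtain the population bound I would test the SDE \eqref{eq:SDE_MKVB} against $f\equiv 1$: the generator term and the Brownian integral vanish, and $\sum_{i=1}^{\ell}f^{ki}-f^k=\ell-1$, so
\[
\langle Z_s,1\rangle=\langle\xi,1\rangle+\int_{(t,s]\times[0,\gammab]\times[0,1]}\sum_{k\in K_{u-}}\sum_{\ell\ge0}(\ell-1)\1_{[0,\gamma(u,X^k_u,\mu_u,\alpha^k_u)]\times I_\ell(u,X^k_u,\mu_u,\alpha^k_u)}(z)\,Q^k(du,dz).
\]
Localising with $\tau_n:=\inf\{s\ge t:\langle Z_s,1\rangle\ge n\}$ — which increase past $T$ almost surely because the solution of Proposition~\ref{proposition existence and uniqueness} is a genuine $E$-valued c\`adl\`ag, hence non-exploding, process — the compensated jump term has zero expectation, whence
\[
\E\big[\langle Z_{s\wedge\tau_n},1\rangle\big]=\E[\langle\xi,1\rangle]+\E\left[\int_t^{s\wedge\tau_n}\sum_{k\in K_u}\gamma(u,X^k_u,\mu_u,\alpha^k_u)\Big(\sum_{\ell}(\ell-1)p_\ell(u,X^k_u,\mu_u,\alpha^k_u)\Big)du\right].
\]
Since $0\le\gamma\le\gammab$ and $\sum_\ell(\ell-1)p_\ell=\sum_\ell\ell p_\ell-1\le M_1-1$ by Assumption~\ref{A.1}(iii), the right-hand side is at most $\E[\langle\xi,1\rangle]+\gammab(M_1-1)^{+}\int_t^s\E[\langle Z_{u\wedge\tau_n},1\rangle]\,du$, and Gr\"onwall's lemma gives $\E[\langle Z_{s\wedge\tau_n},1\rangle]\le\E[\langle\xi,1\rangle]e^{\gammab(M_1-1)^{+}T}$; Fatou's lemma as $n\to\infty$ then yields $\sup_{s\in[t,T]}\E[\langle Z_s,1\rangle]\le\E[\langle\xi,1\rangle]e^{\gammab(M_1-1)^{+}T}<\infty$ under \eqref{assumption xi}. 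In particular $\mu_s$ defined by \eqref{eq:def_mu_t} lies in $\Mc_1(\R^d)$; running the same computation with $f(x)=|x|^2$ — whose branching increment is again $(\ell-1)|X^k_u|^2$ because offspring are born at the parent's position, and whose generator is dominated using the boundedness of $b,\sigma$ in Assumption~\ref{A.1}(ii) — and invoking \eqref{assumption xi} gives $\sup_{s\in[t,T]}\E[\langle Z_s,|\cdot|^2\rangle]<\infty$, so that $\mu_s\in\Mc_2(\R^d)$ and the arguments of $L$ and $g$ in \eqref{eq:cost} are indeed well defined.

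Combining the pieces, $|J(t,\xi,\alpha)|\le C_L\int_t^T\E[\langle Z_s,1\rangle]\,ds+C_g\E[\langle Z_T,1\rangle]\le(C_LT+C_g)e^{\gammab(M_1-1)^{+}T}\E[\langle\xi,1\rangle]<\infty$, and the two expressions for $J$ coincide by Fubini. The only genuinely delicate point is the localisation step — checking that the integral against $Q^k$ becomes a true martingale after stopping and that $\tau_n$ exceeds $T$ for $n$ large — which rests on the non-explosion of the $E$-valued solution; this is part of the well-posedness results of \cite{Claisse Kang Tan 2024} (and is used in exactly this way in \cite{Claisse Kang Lan Tan 2025}), so here it may be quoted rather than reproved.
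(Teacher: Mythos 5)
Your argument is correct. Note first that the paper itself does not prove this statement: Proposition 4.10 (in the paper's numbering) is cited verbatim from \cite[Proposition 3.2]{Claisse Kang Lan Tan 2025}, so there is no ``paper's own proof'' to compare against. What you have supplied is essentially the standard argument, and it is in fact the same first-moment machinery the paper does re-derive in Lemma \ref{lemma integrability K}: testing the SDE \eqref{eq:SDE_MKVB} against $f\equiv 1$, killing the drift and Brownian terms, compensating the jump integral, localising, applying Gr\"onwall, then Fatou. Your bound $\E[\langle Z_s,1\rangle]\le\E[\langle\xi,1\rangle]e^{\gammab(M_1-1)^{+}T}$ is slightly sharper in the exponent than the paper's $e^{\gammab M_1(T-t)}$ (both are fine since only finiteness is needed), and you correctly combine it with the uniform bounds $|L|\le C_L$, $|g|\le C_g$ from Assumption \ref{A.3}(i) to get $|J|\le (C_LT+C_g)\sup_s\E[\langle Z_s,1\rangle]<\infty$. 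The Fubini step for passing between the two displayed forms of $J$ in \eqref{eq:cost} is likewise justified by this integrability.

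One minor imprecision worth flagging: in the second-moment paragraph you say the generator of $f(x)=|x|^2$ is ``dominated using the boundedness of $b,\sigma$''. In fact $\Lc f = \Tr(\sigma\sigma^{\top}) + 2b\cdot x$ has a linearly growing term $2b\cdot x$, so summing over $k\in K_u$ produces $\sum_k|X^k_u|$, which is not directly of the form $\langle Z_u,|\cdot|^2\rangle$. One either needs the first-moment-of-position bound of Lemma \ref{upperbound} (which the paper proves separately), or a Young/Cauchy--Schwarz step $\sum_k|X^k_u|\le\tfrac12(\#K_u+\langle Z_u,|\cdot|^2\rangle)$ to close the Gr\"onwall loop using the already-established bound on $\E[\#K_u]$. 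This gap is small and easily patched, and does not affect the correctness of the conclusion. Everything else — the identification of the branching increment as $(\ell-1)|X^k_u|^2$, the role of non-explosion (quoted from \cite{Claisse Kang Tan 2024}) in making the localising times eventually exceed $T$, and the final assembly — is sound.
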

\vspace{1em}	
Now for all $(t, \nu) \in [0,T] \x \Mc(\R^d)$, let us define
$$
	\Xi(t, \nu) 
	~:=~
	\big\{ \xi ~\mbox{is}~\Fc_t\mbox{-measurable}~E\mbox{-valued random variable s.t.}~ h(\Lc(\xi)) = \nu \big\}.
$$
Equivalently, $\Xi(t, \nu)$ is the set of all $\Fc_t$-measurable $E$-valued random variables $\xi = \sum_{k\in K} \delta_{X^k_t}$ such that $ \E \big[ \sum_{k\in K} \varphi(X^k_t) \big] = \langle \nu, \varphi \rangle$ for all $\varphi \in C_b(\R^d)$.
It is clear that $\Xi(t, \nu)$ is non-empty for all $\nu \in \Mc(\R^d)$.
Let us then introduce the value function of our  controlled branching processes problem:
\begin{equation}\label{eq:value function}
	v(t, \nu)
	~:=~
	\inf_{\alpha \in \Ac} ~\inf_{\xi \in \Xi(t, \nu)} J(t,\xi,\alpha),
	~~\mbox{for all}~(t, \nu) \in [0,T] \x \Mc_2(\R^d).
\end{equation}

In {\cite[Lemma 3.3]{Claisse Kang Lan Tan 2025}}, it is shown that $J(t, \xi_1, \alpha) = J(t, \xi_2, \alpha)$ for all $\xi_1, \xi_2 \in \Xi(t,\nu)$, so that one can replace the infimum over $\xi$ in \eqref{eq:value function}
by taking an arbitrary $\xi \in \Xi(t, \nu)$. Consequently, one can denote $\mu^{t,\nu, \alpha} := \mu^{t,\xi, \alpha}$ with an arbitrary $\xi \in \Xi(t, \nu)$, where $(\mu^{t,\xi, \alpha}_s)_{s \in [t,T]}$ is the flow of marginal measure defined in \eqref{eq:mean measure xi},
	so that 
	$$
		v(t, \nu) = \inf_{\alpha \in \Ac} J(t, \nu, \alpha),
	$$
	and
	\begin{align}\label{eq:reformulation value function}
		v(t, \nu)
		~=&~
		\inf_{\alpha\in\Ac} \bigg\{
		\int_t^T \big\<L\big(s,\cdot,\mu^{t,\nu,\alpha}_s,\alpha_s(\cdot)\big),\mu^{t,\nu,\alpha}_s\big\>ds
		+
		\big\<g(\cdot,\mu^{t,\nu,\alpha}_T),\mu^{t,\nu,\alpha}_T\big\>
		\bigg\}.
	\end{align}

\subsubsection{Dynamic programming principle}

We now provide the dynamic programming principle of the controlled branching processes problem.
Recall that $\mu^{t,\nu, \alpha} := \mu^{t,\xi, \alpha}$ with an arbitrary $\xi \in \Xi(t, \nu)$, where the latter is defined in \eqref{eq:mean measure xi}.

\begin{lemma}[{\cite[Lemma 3.4]{Claisse Kang Lan Tan 2025}}]\label{lemma flow property}
	Let Assumption \ref{A.1} hold true.
	Then we have the flow property for the measure $\mu^{t, \nu}$ in the sense that for all $t \le u \le s \le T$,
\begin{align*}
	\mu^{t,\nu,\alpha}_s
	~=&~
	\mu^{u, \mu^{t,\nu,\alpha}_u,\alpha}_s.
\end{align*}
\end{lemma}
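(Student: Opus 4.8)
The plan is to reduce the flow property to the uniqueness statement for the controlled branching SDE \eqref{eq:SDE_MKVB} together with the restriction of its integral formulation to a subinterval, and then to invoke the fact — recalled above, see {\cite[Lemma 3.3]{Claisse Kang Lan Tan 2025}} — that the marginal flow $\mu^{t,\xi,\alpha}$ depends on the initial datum $\xi$ only through $h(\Lc(\xi))$. Fix $\nu\in\Mc_2(\R^d)$, a control $\alpha\in\Ac$, and some $\xi\in\Xi(t,\nu)$; let $Z=(Z_s)_{s\in[t,T]}=Z^{t,\xi,\alpha}$ be the unique strong solution of \eqref{eq:SDE_MKVB} on $[t,T]$ and write $\mu_s:=\mu^{t,\nu,\alpha}_s=h(\Lc(Z_s))$ for its marginal flow; by the a priori estimates of this subsection one has $\mu_u\in\Mc_2(\R^d)$ and $\E[\langle Z_u,1\rangle]+\E[\langle Z_u,|\cdot|^2\rangle]<\infty$. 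Since $Z_u$ is $\Fc_u$-measurable with $h(\Lc(Z_u))=\mu_u$, we have $Z_u\in\Xi(u,\mu_u)$, so $Z^{u,Z_u,\alpha}$ and $\mu^{u,\mu_u,\alpha}$ are well defined.

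The key step is to show that $(Z_s)_{s\in[u,T]}=Z^{u,Z_u,\alpha}$, hence $\mu^{u,Z_u,\alpha}_s=\mu_s$ for $s\in[u,T]$. To this end I would subtract the integral identity \eqref{eq:SDE_MKVB} evaluated at time $u$ from the same identity at time $s\ge u$: using that $(W^k_s-W^k_u)_{s\in[u,T],\,k\in\K}$ is again a family of Brownian motions independent of $\Fc_u$, and that the restriction of each $Q^k$ to $(u,T]\times[0,\gammab]\times[0,1]$ is a Poisson random measure with Lebesgue intensity, one obtains exactly the integral equation \eqref{eq:SDE_MKVB} with initial time $u$, initial datum $Z_u$, closed-loop control $\alpha$, and interaction term $(\mu_r)_{r\in[u,T]}$. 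But by \eqref{eq:def_mu_t} the flow $(\mu_r)_{r\in[u,T]}$ is precisely the marginal of $(Z_r)_{r\in[u,T]}$; thus $(Z_s)_{s\in[u,T]}$ is a strong solution of \eqref{eq:SDE_MKVB} started at $(u,Z_u)$ in the sense of Definition \ref{definition strong solution} — i.e. with the self-consistent mean-field interaction — and Proposition \ref{proposition existence and uniqueness} (equivalently {\cite[Theorem 2.3]{Claisse Kang Tan 2024}}) forces it to coincide with $Z^{u,Z_u,\alpha}$. Taking marginals gives $\mu^{u,Z_u,\alpha}_s=\mu_s$ for all $s\in[u,T]$.

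Finally, combining this with the $\xi$-independence of the marginal flow, for $s\in[u,T]$,
\[
\mu^{t,\nu,\alpha}_s=\mu^{t,\xi,\alpha}_s=\mu_s=\mu^{u,Z_u,\alpha}_s=\mu^{u,\mu_u,\alpha}_s=\mu^{u,\mu^{t,\nu,\alpha}_u,\alpha}_s,
\]
where the fifth equality uses $Z_u\in\Xi(u,\mu_u)$ together with {\cite[Lemma 3.3]{Claisse Kang Lan Tan 2025}}; since $u\le s$ are arbitrary, this is the asserted flow property. I expect the only genuinely delicate point to be the middle step: one must verify that the restricted process is again a strong solution \emph{in the McKean--Vlasov (self-consistent) sense} — not merely a solution of a decoupled linear SDE obtained by freezing the already-computed flow $\mu$ — so that the uniqueness of Proposition \ref{proposition existence and uniqueness} is applicable; everything else is bookkeeping with the driving noises and the notation $\mu^{t,\nu,\alpha}$.
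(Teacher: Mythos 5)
The paper does not prove this lemma itself; it is quoted directly from \cite[Lemma~3.4]{Claisse Kang Lan Tan 2025}, so there is no internal proof to compare against. Judged on its own merits, your argument is the standard and correct route: restrict the original strong solution $(Z_s)_{s\in[t,T]}$ to $[u,T]$, observe that the subtracted integral identity shows it solves the McKean--Vlasov branching SDE on $[u,T]$ started from $Z_u$ \emph{with the self-consistent interaction} (since the frozen flow $(\mu_r)_{r\in[u,T]}$ is precisely the marginal flow of the restricted process itself), invoke the pathwise uniqueness of Proposition~\ref{proposition existence and uniqueness} to identify $(Z_s)_{s\in[u,T]}$ with $Z^{u,Z_u,\alpha}$, and then use the $\xi$-independence of marginals (\cite[Lemma~3.3]{Claisse Kang Lan Tan 2025}) together with $Z_u\in\Xi(u,\mu_u)$ to replace $Z_u$ by $\mu_u$. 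You correctly flag the one genuinely delicate step --- that the restriction must be a solution in the self-consistent sense, not merely of a decoupled SDE with a frozen external input --- and you address it. Two small points worth making explicit if this were to be written out in full: (i) one should record that $Z_u$ inherits the required integrability $\E[\langle Z_u,1\rangle]+\E[\langle Z_u,|\cdot|^2\rangle]<\infty$ from the a priori bounds of Lemmas~\ref{lemma integrability K} and \ref{upperbound}, so that the well-posedness result applies; and (ii) although Definition~\ref{definition strong solution} and Proposition~\ref{proposition existence and uniqueness} nominally take an $\Fc_0$-measurable initial datum while $Z_u$ is only $\Fc_u$-measurable, this is resolved by the standard time-shift of the filtration and noises (which is exactly what your remark about $(W^k_\cdot-W^k_u)$ and the restricted Poisson random measures being independent of $\Fc_u$ is implicitly doing), consistent with the fact that $\Xi(u,\nu)$ is defined with $\Fc_u$-measurability. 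With these made explicit, the proof is complete.
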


\begin{theorem} \label{DPP}
	Let Assumptions \ref{A.1} and \ref{A.3} be true,
	and $v$ be the value function of the controlled branching process in \eqref{eq:reformulation value function}.
	Then, for all $(t, \nu) \in [0,T] \x \Mc_2(\R^d)$ and $s \in [t,T]$, one has
	\begin{equation}\label{eq:dpp}
		v(t, \nu)
		~=~
		\inf_{\alpha\in\Ac}\bigg\{
			\int_{t}^{s} \<L\big(u,\cdot,\mu^{t,\nu,\alpha}_u, \alpha_u(\cdot)\big),\mu^{t,\nu,\alpha}_u\>du
		+
		v(s, \mu^{t,\nu,\alpha}_s)
		\bigg\}.
	\end{equation}
\end{theorem}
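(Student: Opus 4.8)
The plan is to reduce the dynamic programming identity \eqref{eq:dpp} to a purely deterministic statement about the flow of marginal measures. The key enabling fact, recalled just before the statement, is \cite[Lemma 3.3]{Claisse Kang Lan Tan 2025}: the cost $J(t,\xi,\alpha)$ depends on the initial datum only through $\nu=h(\Lc(\xi))$, so that $v(t,\nu)=\inf_{\alpha\in\Ac}J(t,\nu,\alpha)$ where, by \eqref{eq:reformulation value function}, $J(t,\nu,\alpha)$ is a functional of the deterministic flow $(\mu^{t,\nu,\alpha}_u)_{u\in[t,T]}$ alone. That flow obeys the semigroup-type identity $\mu^{t,\nu,\alpha}_r=\mu^{s,\mu^{t,\nu,\alpha}_s,\alpha}_r$ for $t\le s\le r\le T$ (Lemma \ref{lemma flow property}). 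Denoting by $W(t,\nu)$ the right-hand side of \eqref{eq:dpp}, I would prove $v\ge W$ and $v\le W$ separately.

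For $v\ge W$ (the easy half), I would fix an arbitrary $\alpha\in\Ac$, split $J(t,\nu,\alpha)$ at time $s$ into the running cost on $[t,s]$ plus the remaining cost on $[s,T]$, and recognise the latter, via Lemma \ref{lemma flow property} applied to the restricted feedback $\alpha|_{[s,T]}\in\Ac$, as $J(s,\mu^{t,\nu,\alpha}_s,\alpha|_{[s,T]})\ge v(s,\mu^{t,\nu,\alpha}_s)$. This gives $J(t,\nu,\alpha)\ge\int_t^s\<L(u,\cdot,\mu^{t,\nu,\alpha}_u,\alpha_u(\cdot)),\mu^{t,\nu,\alpha}_u\>du+v(s,\mu^{t,\nu,\alpha}_s)\ge W(t,\nu)$, and taking the infimum over $\alpha$ yields $v\ge W$.

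For $v\le W$, I would argue by $\eps$-optimal concatenation. Given $\eps>0$, first pick $\alpha^1\in\Ac$ that is $\eps$-optimal in the definition of $W(t,\nu)$, set $\nu':=\mu^{t,\nu,\alpha^1}_s$ (which lies in $\Mc_2(\R^d)$ by the a priori moment estimates along the branching flow), and then pick $\alpha^2\in\Ac$ that is $\eps$-optimal for $v(s,\nu')$. Gluing them into the feedback $\alpha(u,x):=\alpha^1(u,x)\1_{[t,s)}(u)+\alpha^2(u,x)\1_{[s,T]}(u)$ --- which stays in $\Ac$ because, by Definition \ref{A.2}, its spatial Lipschitz and linear-growth constants can be taken as the maxima of those of $\alpha^1$ and $\alpha^2$ --- and using Lemma \ref{lemma flow property} twice (the flow under $\alpha$ agrees with that under $\alpha^1$ on $[t,s]$, since a single time point is irrelevant for the driving Brownian and Poisson noises and for the running cost, and agrees with $\mu^{s,\nu',\alpha^2}$ on $[s,T]$), I would get $J(t,\nu,\alpha)=\int_t^s\<L,\mu^{t,\nu,\alpha^1}_u\>du+J(s,\nu',\alpha^2)\le W(t,\nu)+2\eps$. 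Letting $\eps\downarrow 0$ gives $v\le W$, which together with the first inequality proves the theorem.

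The genuinely routine ingredients are: the time-measurability and integrability of $u\mapsto\<L(u,\cdot,\mu^{t,\nu,\alpha}_u,\alpha_u(\cdot)),\mu^{t,\nu,\alpha}_u\>$ and the propagation of second moments along the flow (from Assumptions \ref{A.1}, \ref{A.3} and the a priori estimates), together with the stability of $\Ac$ under restriction and concatenation (immediate from Definition \ref{A.2}). The only point that requires genuine care is the $v\le W$ direction: in a truly stochastic closed-loop formulation one would have to splice an $\eps$-optimal control at the \emph{random} intermediate state through a measurable-selection argument, which is the usual technical heart of such dynamic programming proofs. Here that difficulty disappears precisely because of the deterministic reduction via \cite[Lemma 3.3]{Claisse Kang Lan Tan 2025}: the relevant intermediate state $\nu'=\mu^{t,\nu,\alpha^1}_s$ is a single deterministic measure, so an ordinary $\eps$-optimal choice of $\alpha^2$ followed by deterministic concatenation suffices.
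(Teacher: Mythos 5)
The paper does not give a proof of this statement: it is imported verbatim as \cite[Theorem 3.2]{Claisse Kang Lan Tan 2025}, so there is nothing in the present manuscript against which to match your argument. What you have written is therefore a blind reconstruction of the cited reference's proof rather than of any proof in this paper.

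On its own merits, your sketch is the standard two-inequality dynamic programming argument, and the structural observations you rely on are the right ones: the deterministic reduction via the "$J$ depends on $\xi$ only through $\nu$" lemma, the semigroup/flow property of $\mu^{t,\nu,\alpha}$, and the closure of the closed-loop class $\Ac$ under time-restriction and time-concatenation (the Lipschitz and linear-growth constants for the spliced control are simply the maxima of the two pieces, and Definition \ref{A.2} imposes no continuity in $t$, so the splice at the single time $s$ is harmless). Your remark that the deterministic reduction sidesteps the usual measurable-selection difficulty at the random intermediate state is precisely the point that makes this branching McKean--Vlasov DPP tractable. The one place where a fully rigorous write-up would need a bit more care than your sketch provides is the justification that the marginal flow generated by the concatenated control agrees with $\mu^{t,\nu,\alpha^1}$ on $[t,s]$ and with $\mu^{s,\nu',\alpha^2}$ on $[s,T]$: this is not literally an application of Lemma \ref{lemma flow property} (which is stated for a single fixed $\alpha$), but rather follows from the pathwise uniqueness of the branching SDE (Proposition \ref{proposition existence and uniqueness}) applied separately on each subinterval, together with the fact that the dynamics on $[t,s]$ never read the values of $\alpha$ on $[s,T]$. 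With that point made explicit, your argument reproduces the expected proof of the cited theorem.
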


{
	Notice that our control problem \eqref{eq:value function} differs slightly with that in \cite{Claisse Kang Lan Tan 2025} since our admissible control are required to be Lipschitz in the space variable with fixed Lipschitz constant $C_0 > 0$, in place of any measurable function in \cite{Claisse Kang Lan Tan 2025}.
	The proof of our dynamic programming result is almost the same as that in \cite[Theorem 3.2]{Claisse Kang Lan Tan 2025}, and is hence omitted.
}

\subsubsection{The Hamilton-Jacobi-Bellman equation and its comparison principle}
	
{
	Recall the fixed constant $C_0 > 0$ at the beginning of Section \ref{sec:Formulation}, 
}
	we denote by $\tilde{\mathcal{A}}$ the space of all measurable deterministic function $\tilde\alpha:\R^d\rightarrow A$ satisfying :
	\begin{itemize}
		\item \(\big|\tilde\alpha(x) - \tilde\alpha(x')\big|
		~\leq~
		C_0 \big(|x-x'|\big)\) for all $(x,x')\in\R^d\x\R^d$,
		\item \(\big|\tilde\alpha(x)\big|
		~\leq~
		C_0 \big(1 + | x |\big)\).
	\end{itemize}
	For all $\tilde \alpha \in \tilde{\mathcal{A}}$, we define $\Gc^{\tilde \alpha}_t v$ by, for any $\nu\in\Mc_2(\R^d)$ and $x\in\R^d$,
\begin{align*}
	\Gc^{\tilde\alpha}_t v(\nu)(x)
	~:=&~
	b\big(t,x,\nu,\tilde\alpha(x)\big)\cdot D_\mu v(t,\nu)(x) 
	+
	\frac{1}{2}\Tr\left(\sigma\sigma^{\top}\big(t,x,\nu,\tilde\alpha(x)\big)\partial_x D_\mu v(t,\nu)(x)\right)\\
	&~+
	\gamma\big(t,x,\nu,\tilde\alpha(x)\big)\sum_{\ell\geq0}(\ell - 1)p_\ell\big(t,x,\nu,\tilde\alpha(x)\big)\frac{\delta v}{\delta\mu}(t,\nu)(x).
\end{align*}

\begin{theorem}\label{hjbprop}
	Let Assumption \ref{A.1} and \ref{A.3} hold. 
	Then the value function $v$ defined in \eqref{eq:value function} is the unique viscosity solution of the following HJB equation:
	\begin{equation}\label{eq:HJB}
		\partial_t v(t,m) + \inf_{\tilde{\alpha}\in\tilde{\Ac}}\big\{
			\<L\big(t,\cdot,m, \tilde{\alpha} (\cdot)\big),m\>
			+
			\<\Gc^{\tilde{\alpha}}_tv(t,m)(\cdot),m\>
		\big\}
		=
		0, 
		~~~(t,m) \in [0,T)\times\Mc_2(\R^d),\\
	\end{equation}
	with terminal condition 
	$v(T,m) =	\<g(\cdot, m), m\> $.
\end{theorem}

\begin{remark}
	\label{remark:comparison_with_MFC}
	It is pertinent to compare our approach with the recent work of \cite{Erhan Hang 2025}, which establishes a comparison principle for a specific class of HJB equations arising from mean field control (MFC) problems. The methodology in \cite{Erhan Hang 2025} leverages a particle approximation scheme, constructing a sequence of finite-dimensional PDEs whose solutions converge to the viscosity solution of the limiting equation on the Wasserstein space. A significant advantage of this approach is that it operates under assumptions that are weaker than those required by the more general doubling variable arguments employed in \cite{Erhan Xin 2025}.

	The core of the method in \cite{Erhan Hang 2025} relies on the limit theory of the standard mean-field control problem, 
	which ensures that the mean-field control problem describes the limiting behavior of a large system of controlled interacting particles as the population size tends to infinity. 
	Although the limit theory for uncontrolled McKean-Vlasov branching diffusion process has already been established in \cite{Claisse Kang Tan 2024, CaoRenTan}, and it is natural to be extended to the controlled setting,
	such an extension seems not direct and trivial. 
	We would like to address it in our future project.

\end{remark}

\section{Proof of Theorem \ref{hjbprop}}\label{section5}

This chapter is devoted to the rigorous proof of Theorem \ref{hjbprop}, which characterizes the value function of the controlled branching McKean-Vlasov problem as the unique viscosity solution to the associated Hamilton-Jacobi-Bellman equation \eqref{eq:HJB}. The argument proceeds in three distinct stages. First, Section \ref{section5.1} establishes the requisite a priori estimates for the controlled branching diffusion , alongside fundamental stability results and Itô's formula for measure-valued processes. Building upon these tools, Section \ref{section5.2} deduces the critical regularity properties of the system; in particular, we establish a time-Hölder inequality for the marginal measures with a Hölder coefficient strictly independent of the chosen admissible control, as well as the continuity of the Hamiltonian. Finally, Section \ref{section5.3} synthesizes these components to verify the continuity of the value function and to prove that it satisfies the HJB equation in the viscosity sense, concluding with the verification of the comparison principle to guarantee uniqueness.

\subsection{Technical lemmas}\label{section5.1}
To lay the groundwork for the viscosity solution framework, this section derives essential a priori bounds and stability estimates for the controlled McKean-Vlasov branching diffusion. We begin by establishing uniform moment estimates for both the population size and the spatial distribution of the particles. These estimates are pivotal for ensuring the well-posedness of the cost functional and the finiteness of expectations taken over the entire temporal integration and summation of the branching processes. Subsequently, we recall Itô's formula for measure-valued processes and prove the robust stability of the branching dynamics under the bounded Lipschitz distance.

\subsubsection{Moment estimates for the controlled process}
\begin{lemma}\label{lemma integrability K}
	Let Assumption \ref{A.1} hold true,
	$(t,\xi)$ be the initial condition with $t \in [0,T]$ and $\xi$ being a $\Fc_t$-random variable, 
	and $(\alpha^k)_{k \in \K} \in \Ac$.
	Then with the constant $M_1$ in Assumption \ref{A.1}, one has first order estimate
	\begin{equation}\label{ineq:supK}
		\sup_{s\in[t,T]} \mu^{t,\xi,\alpha}_s(\R^d) 
		~\leq~
		\E\big[\sup_{s\in[t,T]} \# K^{\alpha}_s\big]
		~\leq~
		\E[\<\xi, 1\>] e^{\bar\gamma M_1 (T-t)},
	\end{equation}
	where $\mu^{t,\xi,\alpha}_s$ is the process as defined in \eqref{eq:mean measure xi}.
\end{lemma}

\begin{proof}
	By the definition of the marginal measure $\mu^{t,\xi,\alpha}_s$ given in \eqref{eq:mean measure xi}, evaluating it with the constant test function $\varphi \equiv 1$ yields the total mass of the measure at time $s$:
	\begin{align*}
		\mu^{t,\xi,\alpha}_s(\R^d) 
		~=~ \<\1, \mu^{t,\xi,\alpha}_s\> 
		~=~ \E\bigg[\sum_{k \in K^{\alpha}_s} 1\bigg] 
		~=~ \E\big[\# K^{\alpha}_s\big].
	\end{align*}
	Taking the supremum over $s \in [t, T]$ on both sides, and using the fundamental property that the supremum of an expectation is bounded above by the expectation of the supremum, we obtain:
	\begin{align*}
		\sup_{s\in[t,T]} \mu^{t,\xi,\alpha}_s(\R^d) 
		~=~ \sup_{s\in[t,T]} \E\big[\# K^{\alpha}_s\big] 
		~\leq~ \E\bigg[\sup_{s\in[t,T]} \# K^{\alpha}_s\bigg].
	\end{align*}
	This establishes the first inequality.
	The proof of the second inequality in \eqref{ineq:supK} follows exactly the same argument in Claisse \cite[Proposition 2.1]{Claisse 2018} which is omitted here. 
\end{proof}

\begin{lemma}\label{upperbound}
	Let Assumption \ref{A.1} hold,
	$\xi$ be a $\Fc_t$-random variable such that $\E[\<\xi,1\>] + \E[\<\xi, |\cdot|^2\>] < \infty$,
	and $(\alpha^k)_{k \in \K} \in \Ac$.
	Then
	\begin{align*}
		\E\bigg[\sup_{s\in [t,T]} \sum_{k\in K^{\alpha}_s}|X^{k,\alpha}_s|^2\bigg]
		~\leq~
		C.
	\end{align*}
\end{lemma}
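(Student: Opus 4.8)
The plan is to apply the dynamics \eqref{eq:SDE_MKVB} to the smooth test function $f(x):=\sqrt{1+|x|^2}$, which satisfies $|x|\le f(x)\le 1+|x|$ and has uniformly bounded first and second derivatives, and then to close a Gronwall estimate using the moment bounds on $\#K^{\alpha}_s$ from Lemma \ref{lemma integrability K}. Write $Y_s:=\big\langle Z^{\alpha}_s,f\big\rangle=\sum_{k\in K^{\alpha}_s}f(X^{k,\alpha}_s)$; since $\sum_{k}|X^{k,\alpha}_s|\le Y_s$, it suffices to show $\E\big[\sup_{s\in[t,T]}Y_s\big]\le C$. First note that $\E[Y_t]=\E[\langle\xi,f\rangle]\le\E[\langle\xi,1\rangle]+\big(\E[\langle\xi,1\rangle]\big)^{1/2}\big(\E[\langle\xi,|\cdot|^2\rangle]\big)^{1/2}<\infty$ by Cauchy--Schwarz and the integrability hypothesis on $\xi$. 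Taking $f^k=f$ for every label in \eqref{eq:SDE_MKVB}, so that $\big(\sum_{i=1}^{\ell}f^{ki}-f^k\big)(x)=(\ell-1)f(x)$, we obtain the decomposition
\[
Y_s ~=~ Y_t + \int_t^s\sum_{k\in K_u}\Lc f\big(u,X^k_u,\mu_u,\alpha^k_u\big)\,du + M_s + B_s,
\]
where $M$ is the Brownian stochastic integral and $B$ the Poisson integral carrying the branching jumps.

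We estimate the three terms separately. For the drift, Assumption \ref{A.1}(ii) and the boundedness of $\nabla f,\nabla^2 f$ give $|\Lc f|\le C$, so $\sup_{s\le r}\big|\int_t^s\sum_k\Lc f\,du\big|\le C\int_t^r\#K_u\,du$. For the continuous martingale, $\langle M\rangle_s=\int_t^s\sum_k\big|\sigma^{\top}(u,X^k_u,\mu_u,\alpha^k_u)\nabla f(X^k_u)\big|^2du\le C\int_t^s\#K_u\,du$ by Assumption \ref{A.1}(ii) and $|\nabla f|\le1$, hence Burkholder--Davis--Gundy gives $\E\big[\sup_{s\le r}|M_s|\big]\le C\,\E\big[\big(\int_t^r\#K_u\,du\big)^{1/2}\big]\le C\big((T-t)\,\E[\sup_{s\le T}\#K_s]\big)^{1/2}$, which is finite by \eqref{ineq:supK}. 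For the branching term we discard the jumps with $\ell=0$ and bound $B_s\le\hat B_s$, where $\hat B$ is defined exactly as the branching integral in \eqref{eq:SDE_MKVB} but with the factor $(\ell-1)$ replaced by $(\ell-1)_+$; then $\hat B$ is nondecreasing, and since $\sum_{\ell}(\ell-1)_+p_\ell\le\sum_\ell\ell p_\ell\le M_1$ and $\gamma\le\gammab$, its compensator is dominated by $\gammab M_1\int_t^s\sum_k f(X^k_u)\,du=\gammab M_1\int_t^sY_u\,du$, so $\E[\hat B_s]\le\gammab M_1\int_t^s\E[Y_u]\,du$.

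Combining these bounds and using that $\hat B$ is nondecreasing, $\sup_{s\le r}Y_s\le Y_t+C\int_t^r\#K_u\,du+\sup_{s\le r}|M_s|+\hat B_r$; taking expectations we arrive at
\[
\E\big[\sup_{s\le r}Y_s\big] ~\le~ C_0 + \gammab M_1\int_t^r\E\big[\sup_{u'\le u}Y_{u'}\big]\,du,\qquad r\in[t,T],
\]
with $C_0:=\E[Y_t]+C(T-t)\,\E[\sup_{s\le T}\#K_s]+C\big((T-t)\,\E[\sup_{s\le T}\#K_s]\big)^{1/2}<\infty$ independent of $r$, so Gronwall's lemma yields $\E[\sup_{s\in[t,T]}Y_s]\le C_0\,e^{\gammab M_1(T-t)}=:C$, hence $\E\big[\sup_{s\in[t,T]}\sum_k|X^{k,\alpha}_s|\big]\le C$. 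The passage through a localizing sequence $\tau_n:=\inf\{s\ge t:\#K_s\ge n\}\wedge T$ (along which all the quantities above are finite and the stochastic integrals are true martingales) and Fatou's lemma needed to make the Gronwall step rigorous is standard and proceeds exactly as in the proof of Lemma \ref{lemma integrability K}, using that $C_0$ is independent of $n$ and $\tau_n\uparrow T$ a.s.\ by \eqref{ineq:supK}. The main obstacle is the branching term: a naive Burkholder--Davis--Gundy bound on $B$ would produce $\sum_k f(X^k_u)^2$, i.e.\ a second spatial moment not available at this stage, and the remedy is to retain only the nondecreasing positive-jump part $\hat B$, whose compensator is linear in $Y$, so that Gronwall closes the estimate with first moments alone.
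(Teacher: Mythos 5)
Your proof is correct in substance and, notably, self-contained: the paper itself offers no argument for this lemma, merely citing Lemma 2.5 of Claisse--Kang--Lan--Tan (2025), so your derivation is a genuine reconstruction. The choice $f(x)=\sqrt{1+|x|^2}$, the decomposition of $\langle Z_s,f\rangle$ via the SDE into drift, Brownian martingale, and Poisson jump parts, the bound $|\Lc f|\le C$ from Assumption \ref{A.1}(ii), the BDG step with quadratic variation controlled by $\int\#K_u\,du$, the replacement of the jump integrand's $(\ell-1)$ by $(\ell-1)_+$ to obtain a nondecreasing dominating process $\hat B$ with compensator $\le\bar\gamma M_1\int Y_u\,du$, and the final Gronwall closure using Lemma \ref{lemma integrability K} are all sound; your remark about why a naive BDG on $B$ fails (it produces $\sum_k f(X^k_u)^2$, a second spatial moment) and why the one-sided monotone bound is the right remedy correctly identifies the structural point of the argument.

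The one place that needs tightening is the localization. You stop at $\tau_n=\inf\{s\ge t:\#K_s\ge n\}\wedge T$ and assert that "along which all the quantities above are finite," but capping the particle count does not cap the positions $|X^k_s|$, so on $[t,\tau_n]$ the process $Y_s=\sum_k f(X^k_s)$ and the compensator $\bar\gamma M_1\int Y_u\,du$ of $\hat B$ remain a priori unbounded, and moreover the SDE \eqref{eq:SDE_MKVB} is stated for test functions in $C^2_b$ whereas $f$ is unbounded. The standard repair is either (i) a second localization $\sigma_m=\inf\{s:\max_{k\in K_s}|X^k_s|\ge m\}$, on $[t,\tau_n\wedge\sigma_m]$ everything is genuinely bounded and the SDE applies after replacing $f$ by a $C^2_b$ truncation that agrees with $f$ on $\{|x|\le m\}$, or (ii) directly applying the SDE to a family $f_R\in C^2_b$ with uniformly bounded first and second derivatives and $f_R\uparrow f$, and passing to the limit by monotone convergence/Fatou once the $R$-uniform bound is in hand. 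Since $b$ and $\sigma$ are bounded by Assumption \ref{A.1}(ii) and there are at most $n$ particles before $\tau_n$, one indeed has $\tau_n\wedge\sigma_m\uparrow T$ a.s., so either route closes cleanly; it simply should be spelled out rather than folded into "standard."
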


\begin{proof}
	The proof of Lemma \ref{upperbound} follows exactly the same argument in \cite[Lemma~2.5]{Claisse Kang Lan Tan 2025}
\end{proof}

\subsubsection{Itô's formula for measure-valued processes}

	Recall that the linear functional derivatives and intrinsic derivatives for functionals defined on $\Mc_2(\R^d)$ are defined in Definition \ref{definition linear functional derivative}.

\begin{theorem}[{\cite[Theorem 2.14]{CaoRenTan}}]\label{proposition Ito formula}
	Let $F\in C^{1,2}([0,T] \x \Mc_2(\R^d))$,
	and $(\mu_t)_{t \in [0,T]}$ be given by \eqref{eq:def_mu_t} for a controlled branching diffusion process $Z$ with control $\alpha$.
	Then, for all $0 \le s \le t \le T$,
	\begin{align*}
		F(t, \mu_t) - F(s, \mu_s) &\nonumber  \\
		=
		\int_s^t
		\partial_t F(u, \mu_u) 
		+
		\Big\langle &
			b\big(u,\cdot,\mu_u,\alpha_u(\cdot)\big)\cdot D_\mu F(\mu_u, \cdot) \nonumber  \\
		&	+
			\frac{1}{2}\Tr\big(\sigma\sigma^{\top} \big(u, \cdot,\mu_u, \alpha_u(\cdot)\big)\partial_x D_\mu F(\mu_u,\cdot)\big) \nonumber \\
		&	~+
			\gamma\big(u,\cdot,\mu_u,\alpha_u(\cdot)\big)\sum_{\ell\geq0}(\ell - 1)p_\ell\big(u,\cdot,\mu_u,\alpha_u(\cdot)\big)\frac{\delta F}{\delta\mu}(\mu_u,\cdot)
			,
			\mu_u
		\Big\rangle
		du.
	\end{align*}
\end{theorem}

\subsubsection{Stability estimates of the process}

Here we establish the Lipschitz continuity of the branching marginal measures with respect to the initial measure in the bounded Lipschitz distance (as defined in Section \ref{T_metric}). This stability result is fundamental, demonstrating that two processes starting from similar initial configurations will remain close over time. This property is key to proving the continuity of the value function in the bounded Lipschitz distance.

\begin{lemma}\label{branching stability}
    Let Assumption \ref{A.1} hold true. For any $\alpha \in \Ac$, let $Z^{t,\xi,\alpha}_s$ and $Z^{t,\xi',\alpha}_s$ be respectively the unique solution to SDE with initial condition $\xi$ and $\xi'$ satisfying \eqref{assumption xi}. Then there exists a constant $C>0$  depend on \( (C_{b,\gamma,l,\sigma}, L, C_0, C_1, \bar\gamma,M) \) where \(C_1:=\max \left(\mathbb{E}\left[\# K_{0}\right], \mathbb{E}\left[\# K'_{0}\right]\right)\) such that
    \begin{align*}
        \mathbb{E}[d_E(Z^{t,\xi,\alpha}_s,Z^{t,\xi',\alpha}_s) ] \leq C \mathbb{E}[d_E(\xi,\xi') ].
    \end{align*}
\end{lemma}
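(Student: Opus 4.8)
The plan is to couple the two systems synchronously and then derive a Grönwall inequality. Realize $Z:=Z^{t,\xi,\alpha}$ and $Z':=Z^{t,\xi',\alpha}$ on the same probability space, driving both with the \emph{same} families $(\Wk)_{k\in\K}$ and $(\Qk)_{k\in\K}$, so that a particle carrying label $k$ in either system moves along $\Wk$ and branches/dies at the points of $\Qk$. Write $Z_s=\sum_{k\in K_s}\delta_{(k,\Xk_s)}$, $Z'_s=\sum_{k\in K'_s}\delta_{(k,Y^k_s)}$, $\mu_u:=\mu^{t,\xi,\alpha}_u=h(\Lc(Z_u))$, $\mu'_u:=\mu^{t,\xi',\alpha}_u=h(\Lc(Z'_u))$, and set $\phi(s):=\E[d_E(Z_s,Z'_s)]$. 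By \eqref{eq:def_d_E},
\[
d_E(Z_s,Z'_s)=S_s+N_s,\qquad S_s:=\sum_{k\in K_s\cap K'_s}\big(|\Xk_s-Y^k_s|\wedge1\big),\quad N_s:=\#\big(K_s\triangle K'_s\big),
\]
and I would estimate $\E[S_s]$ and $\E[N_s]$ separately, aiming at $\phi(s)\le 2\,\E[d_E(\xi,\xi')]+C\int_t^s\phi(u)\,du$.

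\textbf{The spatial term $\E[S_s]$.} For a label $k\in K_u\cap K'_u$, both copies of particle $k$ solve, between their (common) birth time and $u$, the diffusion SDE with the same driver $\Wk$ and coefficients $(b,\sigma)$, differing only through the positions and the interaction terms $\mu_u$ versus $\mu'_u$; by Assumption \ref{A.1}(i)--(ii) and the Lipschitz property of $\alpha$ the drift/diffusion discrepancy is bounded by $C\big((|\Xk_u-Y^k_u|\wedge1)+\rho_F(\mu_u,\mu'_u)\big)$. Applying Itô's formula to a smooth approximation of $|\cdot|\wedge1$ (using concavity of $r\mapsto r\wedge1$ near $r=1$ to control the correction terms) and summing over alive common labels — branchings of a common particle into common children merely replicate the inherited discrepancy, branchings that decouple decrease $S$ — gives a Dynkin inequality
\[
\E[S_s]\ \le\ \E[d_E(\xi,\xi')]+C\int_t^s\Big(\E[S_u]+\E[\#K_u]\,\rho_F(\mu_u,\mu'_u)\Big)\,du .
\]

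\textbf{The mismatch term $\E[N_s]$.} A label is alive in exactly one system only if, somewhere along its ancestral line, a point of the corresponding Poisson measure at time $u$ landed in the symmetric difference of the two acceptance regions $[0,\gamma(u,\Xk_u,\mu_u,\alpha^k_u)]\times I_\ell(\cdots)$ and $[0,\gamma(u,Y^k_u,\mu'_u,\alpha^k_u)]\times I_\ell(\cdots)$. By Assumption \ref{A.1}(i),(iii) the Lebesgue measure of this symmetric difference is $\le C\big((|\Xk_u-Y^k_u|\wedge1)+\rho_F(\mu_u,\mu'_u)\big)$ uniformly in the states (here $\gamma\le\gammab$ and $\sum_\ell C_\ell<\infty$ are used), and each such decoupling event creates a mean number of new mismatched labels bounded by $1+2\sum_\ell\ell\,p_\ell(\cdots)\le1+2M_1$. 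Taking the compensators of the jump terms, bounding $\sum_{k}(|\Xk_u-Y^k_u|\wedge1)\le S_u$, and controlling the further branching inside the already-mismatched subtrees by the rate bound $\gamma\le\gammab$ yields
\[
\E[N_s]\ \le\ \E[d_E(\xi,\xi')]+C\int_t^s\Big(\E[N_u]+\E[S_u]+\E[\#K_u]\,\rho_F(\mu_u,\mu'_u)\Big)\,du .
\]

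\textbf{Conclusion and main difficulty.} To close the loop, note that the mean-field discrepancy feeds back into $\phi$: Lemma \ref{lemma continuity} together with the fact that $(Z_u,Z'_u)$ is a coupling of $\Lc(Z_u)$ and $\Lc(Z'_u)$ gives $\rho_F(\mu_u,\mu'_u)\le C\,\Wc_1(\Lc(Z_u),\Lc(Z'_u))\le C\,\phi(u)$, while Lemma \ref{lemma integrability K} gives $\sup_{u\le T}\E[\#K_u]\le C_0\,e^{\gammab M_1 T}<\infty$. Adding the two displays, substituting these bounds and $\E[S_u]+\E[N_u]=\phi(u)$, and applying Grönwall's lemma yields $\phi(s)\le C\,\E[d_E(\xi,\xi')]$ with $C$ depending only on the data listed in the statement. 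The delicate step is the mismatch estimate: one must track, simultaneously over the whole Ulam--Harris--Neveu tree, how a discrepancy at a particle propagates through the slightly different branching rates and offspring laws to entire newly created (or destroyed) subtrees, and show that the expected rate of such events stays \emph{linear} in $\phi(u)$ rather than, say, in $\E[(\#K_u)^2]$; this is precisely where the summability $\sum_\ell\ell C_\ell<\infty$, the uniform bound $\gamma\le\gammab$, and the first-moment population estimate of Lemma \ref{lemma integrability K} are essential, and the argument follows and slightly extends the stability estimate in \cite{Claisse Kang Lan Tan 2025}.
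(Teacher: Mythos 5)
Your proposal is correct and follows essentially the same route as the paper: the paper states exactly the decomposition $d_E(Z_s,Z'_s)=\#(K_s\triangle K'_s)+\sum_{k\in K_s\cap K'_s}(|X^k_s-X'^k_s|\wedge1)$ and then simply cites \cite[Proposition~A.1]{Claisse Kang Tan 2024} (not \cite{Claisse Kang Lan Tan 2025}, which you cite) together with the uniform Lipschitz bound on the closed-loop control, whereas you spell out the synchronous coupling, the separate Gr\"onwall estimates for the spatial and mismatch parts, and the key closing step $\rho_F(\mu_u,\mu'_u)\le C\,\Wc_1(\Lc(Z_u),\Lc(Z'_u))\le C\,\phi(u)$ via Lemma~\ref{lemma continuity}. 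One small imprecision: at a synchronized branching of a common particle into $\ell$ children the spatial contribution is multiplied by $\ell$ (not ``merely replicated''), so the replication enters the Gr\"onwall constant through the rate $\gammab$ and the mean offspring $M_1$ rather than being neutral, but your integral estimate already accounts for this.
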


\begin{proof}
    Denote $K^{\triangle}_t := K_t\triangle K'_t$ and $K^{\cap}_t = K_t\cap K'_t$ and recall that
    \begin{align*}
     d_E(Z^{t,\xi,\alpha}_s,Z^{t,\xi',\alpha}_s) = \# K^{\triangle,\alpha}_s + \sum_{k\in K^{\cap,\alpha}_s}  \big|X^{k,\alpha}_s-X'^{k,\alpha}_s\big|\wedge 1.
    \end{align*}
    Since $\alpha$ is a closed-loop control satisfies the Lipschitz conditions with a uniform Lipschitz constant not depend on $\alpha$. The result is a direct generalization of {\cite[Proposition A.1]{Claisse Kang Tan 2024}}.
\end{proof}

\begin{lemma}
	For any $m,m' \in \Mc_2(\R^d)$, $\alpha \in \Ac$, there exists a constant $C> 0$ (independent of $\alpha$) such that
	\[
		\mathbf{d}_{\mathrm{BL}}(\mu^{t,m,\alpha}_s,\mu^{t,m',\alpha}_s) \leq C\mathbf{d}_{\mathrm{BL}}(m,m').
	\]
\end{lemma}

	\begin{proof}
		First, by {\cite[Lemma 2.2]{Claisse Kang Lan Tan 2025}} and Lemma \ref{branching stability}, for any E-valued  random variable $\xi,\xi'$ satisfy $h(\mathcal{L}(Z)) = m, h(\mathcal{L}(Z')) = m'$, 
		\[
			\mathbf{d}_{\mathrm{BL}} (\mu^{t,m,\alpha}_s,\mu^{t,m',\alpha}_s) \leq \mathbb{E}[d_E(Z^{t,\xi,\alpha}_s,Z^{t,\xi',\alpha}_s) ] \leq C \mathbb{E}[d_E(\xi,\xi') ].
		\]
		So we left to show that for any $\varepsilon$ by some specific construction of $\xi,\xi'$, we have \(\mathbb{E}[d_E(\xi,\xi') ] \leq C\mathbf{d}_{\mathrm{BL}}(m,m') + \eps\).

		By definition of $\mathbf{d}_{\mathrm{BL}}$, it can be easily checked that
		\[
			\mathbf{d}_{\mathrm{BL}}(m, m^{\prime}) \geq
		C\min\left(m\left({\R}^{d}\right), m^{\prime}\left({\R}^{d}\right)\right)\cdot\mathcal{W}_{1}\left(\frac{m}{m\left({\R}^{d}\right)},\frac{m^{\prime}}{m^{\prime}\left({\R}^{d}\right)}\right)+\left|m\left({\R}^{d}\right)-m^{\prime}\left({\R}^{d}\right)\right|,
		\]
		where $\mathcal{W}_{1}$
		is the Wasserstein distance of probability measure with trancated Euclidian distance \\
		Since $m$ and $m'$ are finite positive measures, we shall assume that \(m(\R^d) = n + \lambda\), and \(m'(\R^d) = n' + \lambda'\), where $n$ and $n'$ are positive integers, and \(\lambda,\lambda' \in [0,1)\). Without loss of generality we may assume \(m(\R^d) \leq m(\R^d)' < N\) for some positive integer N\\
		Then, we can construct $N$ i.i.d. random pairs \((X_1,Y_1),...,(X_{N},Y_{N})\) with \( X_i \sim \frac{m}{m(\R^d)} \), \( Y_i \sim \frac{m'}{m'(\R^d)} \) and \[
		\mathbb{E}\left[\left|X_{i}-Y_{i}\right|\right] - \varepsilon \leq \mathcal{W}_{1}\left(\frac{m}{m\left(\mathbb{R}^{d}\right)}, \frac{m^{\prime}}{m^{\prime}\left(\mathbb{R}^{d}\right)}\right)
		\]
	 for all \(1 \leq i \leq N \). We also construct two Bernoulli random variables $\alpha$ and $\alpha'$ such that $\alpha = \mathbf{1}_{\{U \leq \lambda\}}$, $\alpha' = \mathbf{1}_{\{U \leq \lambda'\}}$, where $U \sim \text{Uniform}(0,1)$.

		Let us define
		\[
		\xi = \sum_{k = 1}^{n}\delta_{(k,X_k)} + \alpha \delta_{(n+1,X_{n+1})},
		\]
		\[
		\xi' = \sum_{k = 1}^{n}\delta_{(k,Y_k)} + \alpha' \delta_{(n+1,Y_{n+1})} + \sum_{k = n+2}^{n'+1}\delta_{(k,Y_k)}.
		\]
		Then by  \eqref{metric:E} we have, 
		\[
		d_{E}\left(\xi,\xi' \right)\leq\sum_{i=1}^{n}\left(\left|X_{i}-Y_{i}\right|\wedge 1\right) + min(\alpha, \alpha')\left(\left|X_{n+1}-Y_{n+1}\right|\wedge 1\right) + |\alpha' - \alpha| + n^{\prime} - n.
		\]
		Taking the expectation, it follows that
		\begin{align*}
		\mathbb{E}\left[d_{E}\left(\xi,\xi' \right)\right]
		& \leq\min\left(m\left(\mathbb{R}^{d}\right), m'\left(\mathbb{R}^{d}\right)\right)\cdot\mathcal{W}_{1}\left(\frac{m}{m\left(\mathbb{R}^{d}\right)},\frac{m'}{m'\left(\mathbb{R}^{d}\right)}\right)+\left|m\left(\mathbb{R}^{d}\right)-m'\left(\mathbb{R}^{d}\right)\right| + \varepsilon \\
		& \leq \mathbf{d}_{\mathrm{BL}}(m, m^{\prime})+ \varepsilon.
		\end{align*}
		\end{proof}

\subsection{Regularity properties of the marginal measures and the Hamiltonian}\label{section5.2}

With the foundational estimates in place, we now turn to the regularity properties of the branching marginal measures and the Hamiltonian, which are indispensable for demonstrating the existence of the viscosity solution to the HJB equation. The primary objective of this section is twofold. First, we strictly establish that the measure-valued process satisfies a $\frac{1}{2}$-Hölder inequality in time, yielding a Hölder coefficient that is independent of the admissible control $\alpha \in \mathcal{A}$. Second, we verify the continuity of the Hamiltonian $G$ with respect to its parameters, a necessary condition for the application of the comparison principle established in Chapter \ref{section3}.

\subsubsection{Time-Hölder continuity of the marginal measures}

\begin{lemma}\label{strong continuity}
	Let Assumption \ref{A.1} hold true, $\xi$ be a $\Fc_t$-random variable such that $\E[\<\xi,1\>] < \infty$. The measure valued process $\mu^{t,m,\alpha}_s$ as defined in \eqref{eq:mean measure xi} and \cite[Lemma 3.3]{Claisse Kang Lan Tan 2025} is $\frac{1}{2}$-Hölder continuous in time in the sense that there exists a constant $C > 0$, such that for any $r, s \in [t, T]$ with $r \leq s$, the following estimate holds:
	\begin{align*}
		\rho_F\big(\mu^{t,m,\alpha}_s, \mu^{t,m,\alpha}_r\big)
		~\leq~
		C \sqrt{\left|s-r\right|},
	\end{align*}
	where the constant C is independent of the choice of the admissible control $\alpha \in \mathcal{A}$.
\end{lemma}

\begin{proof}
	Recall that from Lemma \ref{lemma continuity}, we have that for any $\xi \in E$, \(s \geq r\),
	\begin{align*}
		\rho_F\big(\mu^{t,m,\alpha}_s, \mu^{t,m,\alpha}_r\big)
		~\leq~
		C \E\big[\sum_{k\in K_s\cap K_r}|X_s^k - X_r^k|\wedge1 + \# (K_s\triangle K_r)\big]
		~=~
		C \E\big[d_E(Z^{t,\xi,\alpha}_s, Z^{t,\xi,\alpha}_r)\big].
	\end{align*}
	\textbf{Part 1.}We estimate \(\# (K_s\triangle K_r)\)
Let us denote the number of alive particles at time t by $N_t$, and let $D(r,s)$ denote the number of particles dead in $[r,s]$, then we have \(\# (K_s\triangle K_r) = |N_s - N_r| + 2D(r,s)\). Consider SDE \eqref{eq:SDE_MKVB}, taking $f(x) = 1$, we get
	\begin{align*}
		N_s = \langle Z_s, f \rangle = N_r + \int_{(r,s] \times [0,\bar{\gamma}] \times [0,1]} \sum_{k \in K_{u-}} \sum_{\ell \geq 0} (\ell - 1) \1_{[0,\gamma] \times I_\ell}(z) Q^k(du, dz),
	\end{align*}
	\[
\mathbb{E}\left[\left|N_{s}-N_{r}\right|\right] \leq \mathbb{E}\left[\int_{r}^{s} \sum_{k \in K_{v-}} \gamma\left(v, X_{v}^{k}, \mu_{v}, \alpha_{v}^{k}\right) \sum_{\ell \geq 0}|\ell-1| p_{\ell}\left(v, X_{v}^{k}, \mu_{v}, \alpha_{v}^{k}\right)  dv\right].
\]
Since $\sum_{\ell} |\ell-1| p_{\ell}$ is bounded, let the bound be $M_1$, and $\gamma \leq \bar{\gamma}$, we have:

\[
\mathbb{E}\left[\left|N_s - N_{r}\right|\right] 
\leq \left(\bar{\gamma} M_1\right) (s - r) 
\mathbb{E}\left[ \sup_{v \in [r, s]} N_v \right].
\]
By a prior estimate, $\mathbb{E}\left[ \sup_{v} N_v \right] \leq C$ 
(where the constant $C$ does not depend on the control), we have:

\[
\mathbb{E}\left[\left|N_s - N_{r}\right|\right] \leq C_1 (s - r).
\]
As for the death part $D(r,s)$, we have
\[
\mathbb{E}\left[D(r,s)\right] = 
\mathbb{E}\left[\int_{r}^{s} \sum_{k \in K_{v}} \gamma\left(v, X_{v}^{k}, \mu_{v}, \alpha_{v}^{k}\right)  dv\right] 
\leq \bar{\gamma} \int_{r}^{s} \mathbb{E}\left[N_{v}\right]  dv 
\leq \bar{\gamma} C_2\left(s-r\right).
\]
Thus we have 
\[
\mathbb{E}\left[\# (K_s\triangle K_r)\right] \leq C (s - r).
\]
\textbf{Part 2.}Next, consider the change in position of particles with common labels, i.e.,
\[
\mathbb{E}\left[\sum_{k \in K_{r} \cap K_s} \left|X_s^k - X_{r}^k\right| \wedge 1\right].
\]
For a particle $k$ that survives at both $r$ and $s$, it does not undergo branching in the interval $[r, s]$. Therefore, within $[r, s]$, the evolution of particle $k$ follows a diffusion process without branching.\\
Hence, for each such particle $k$, its position satisfies:
\[
X_s^k = X_{r}^k + \int_{r}^{s} b\left(u, X_u^k, \mu_u, \alpha_u^k\right) du + \int_{r}^{s} \sigma\left(u, X_u^k, \mu_u, \alpha_u^k\right) dW_u^k.
\]
Therefore,
\[
\left|X_s^k - X_{r}^k\right| \wedge 1 \leq \left|\int_{r}^{s} b\left(u, X_u^k, \mu_u, \alpha_u^k\right) du\right| + \left|\int_{r}^{s} \sigma\left(u, X_u^k, \mu_u, \alpha_u^k\right) dW_u^k\right|,
\]
and 
\begin{align*}
	&\mathbb{E}\left[\sum_{k \in K_{r} \cap K_s} \left|X_s^k - X_{r}^k\right| \wedge 1\right]  \\
	\leq  &\mathbb{E}\left[\sum_{k \in K_{r} \cap K_s}\left|\int_{r}^{s} b\left(u, X_u^k, \mu_u, \alpha_u^k\right) du\right| + \sum_{k \in K_{r} \cap K_s}\left|\int_{r}^{s} \sigma\left(u, X_u^k, \mu_u, \alpha_u^k\right) dW_u^k\right|\right] \\
	\leq &\mathbb{E}\left[\sum_{k \in K_{r} \cap K_s}\int_{r}^{s} \left|b\left(u, X_u^k, \mu_u, \alpha_u^k\right)\right| du + \sum_{k \in K_{r} \cap K_s}\left|\int_{r}^{s} \sigma\left(u, X_u^k, \mu_u, \alpha_u^k\right) dW_u^k\right|\right].
\end{align*}
For the drift part, by the linear growth condition in our assumptions:
\begin{align*}
	\mathbb{E}\left[\sum_{k \in K_{r} \cap K_s}\int_{r}^{s} \left|b\left(u, X_u^k, \mu_u, \alpha_u^k\right)\right| du\right] 
\leq \mathbb{E}\left[\sum_{k \in K_{r} \cap K_s}\int_{r}^{s}\left(1 + \left|X_{u}^{k}\right| + \mu_u(\mathbb{R}^d) + \left|\alpha_{u}^{k}\right|\right)du\right]. 
\end{align*}
By the control condition \(\left|\alpha_{u}^{k}\right| \leq C\left(1 + \left|X_{u}^{k}\right| + \mu_{u}\left(\mathbb{R}^{d}\right)\right)\), together with Lemma \ref{lemma integrability K} we have:
\begin{align*}
	&\mathbb{E}\left[\sum_{k \in K_{r} \cap K_s}\int_{r}^{s} \left|b\left(u, X_u^k, \mu_u, \alpha_u^k\right)\right| du\right] 
\leq C \mathbb{E}\left[\int_{r}^{s}\sum_{k \in K_{r} \cap K_s}\left(1 + \left|X_{u}^{k}\right| + \mu_{u}\left(\mathbb{R}^{d}\right)\right)du\right] \\
\leq& C(s-r)\mathbb{E}\left[\sum_{k \in K_{r} \cap K_s}1\right] + C(s-r)\E\bigg[\sup_{u\in [r,s]} \sum_{k\in K_u}|X^{k}_u|\bigg],
\end{align*}
which is bounded by $C(s-r)$ by Lemma \ref{upperbound}.\\
And since the set $\K$ is countable, we shall prove that the diffusion term has a similar bound:\\
Let \(\Fc^k := \sigma(K_r,K_s)\),
\begin{align*}
	&\mathbb{E}\left[\sum_{k \in K_{r} \cap K_s}\left|\int_{r}^{s} \sigma\left(u, X_u^k, \mu_u, \alpha_u^k\right) dW_u^k\right| \right] 
	= \mathbb{E}\left[\sum_{k \in \K}\mathbf{1}_{\{k \in K_{r} \cap K_s\}}\left|\int_{r}^{s} \sigma\left(u, X_u^k, \mu_u, \alpha_u^k\right) dW_u^k\right| \right]\\
	= &\sum_{k \in \K}\mathbb{E}\left[\left|\int_{r}^{s} \mathbf{1}_{\{k \in K_{r} \cap K_s\}}\sigma\left(u, X_u^k, \mu_u, \alpha_u^k\right) dW_u^k\right| \right].
\end{align*}
Applying Burkholder-Davis-Gundy inequality, and denote the upper bound of $\sigma$ by $C_\sigma$ we get
\begin{align*}
	&\sum_{k \in \K}\mathbb{E}\left[\left|\int_{r}^{s} \mathbf{1}_{\{k \in K_{r} \cap K_s\}}\sigma\left(u, X_u^k, \mu_u, \alpha_u^k\right) dW_u^k\right| \right]\\
	\leq &\sum_{k \in \K}\mathbb{E}\bigg[\left(\int_{r}^{s} \mathbf{1}_{\{k \in K_{r} \cap K_s\}}C_{\sigma}^2 du^k\right)^{1/2} \bigg]
	 = C_\sigma \sqrt{s - r} \mathbb{E}\left[\sum_{k \in \K} \mathbf{1}_{\{k \in K_{r} \cap K_s\}}\right]\\
	\leq &C_\sigma \sqrt{s - r}\mathbb{E}\left[ \sup_{v \in [r, s]} N_v \right]
	\leq C \sqrt{s - r}.
\end{align*}
In summary, we have:
\begin{align*}
\mathbb{E}\left[d_{E}\left(Z_{s}, Z_r\right)\right]
&= \mathbb{E}\left[\sum_{k \in K_{r} \cap K_{s}} \left(\left|X_{s}^{k}-X_{r}^{k}\right| \wedge 1\right)\right] 
+ \mathbb{E}\left[\#\left(K_{s} \triangle K_{r}\right)\right]\\
&\leq C \sqrt{s-r} + C\left|s-r\right| + C\left|s-r\right|\\
&\leq C \sqrt{s-r}.
\end{align*}
Here, we used the inequality \( \left|s-r\right| \leq \sqrt{T} \sqrt{\left|s-r\right|} \) 
(since \( \left|s-r\right| \leq T \)), so it can actually be written as 
\( \leq C \sqrt{\left|s-r\right|} \), where the constant \( C \) does not depend on the control.\\
Thus from Lemma \ref{lemma continuity}, we get
\begin{align*}
	\rho_F\big(\mu^{t,m,\alpha}_s, \mu^{t,m,\alpha}_r\big)
	~\leq~
	C \sqrt{\left|s-r\right|},
\end{align*}
which proofs the continuity.
\end{proof}

\subsubsection{Continuity of the Hamiltonian}
	\begin{lemma}\label{ham_cts}
		Let Assumption \ref{A.1} hold true and \( G \) : \( [0, T] \times \Mc_2(\R^d) \times B^{d,\lambda+1}_l \times B^{d \times d,\lambda}_l \times B^{d,\lambda+2}_l \rightarrow\R \) be  defined by
\begin{align}
    G(t, m, p, q, r) = & \inf_{\tilde{\alpha}\in\tilde{\Ac}} \big[
    \left\langle L\left(t, \cdot, m, \tilde{\alpha}\right), m \right\rangle 
    + \langle b(t, \cdot, m, \tilde{\alpha})p(\cdot) \nonumber \\
    &+ \frac{1}{2}\Tr\left(\sigma\sigma^{\top}\big(t,\cdot,m,\tilde{\alpha}\big)q(\cdot)\right) \nonumber\\
    &+ \gamma\big(t,\cdot,m,\tilde{\alpha}\big) \sum_{\ell\geq0}(\ell - 1)p_\ell\big(t,\cdot,m,\tilde{\alpha}\big)r(\cdot),m \rangle \big].
\end{align}
Then G is continuous in each variable in the sense that for any \(\theta = (t,m)\), \((\theta,p,q,r) \in  [0, T] \times \Mc_2(\R^d) \times B^{d,\lambda+1}_l \times B^{d \times d,\lambda}_l \times B^{d,\lambda+2}_l \), then for any $\eps > 0$, there exists a $\delta>0$ such that for any \((\theta',p',q',r') \in  [0, T] \times \Mc_2(\R^d) \times B^{d,\lambda+1}_l \times B^{d \times d,\lambda}_l \times B^{d,\lambda+2}_l \) satisfying
		\[d_F^2(\theta,\theta') + |p-p'|_{l}+|q-q'|_{l}+|r-r'|_{l} \leq \delta, \]
		we have 
		\[
		|G(\theta,p,q,r) - G(\theta',p',q',r')| \leq \eps.
		\]
	\end{lemma}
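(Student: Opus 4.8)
The plan is to exploit that $G$ is an infimum over $\tilde\alpha\in\tilde\Ac$: since $|\inf_a f(a)-\inf_a h(a)|\le\sup_a|f(a)-h(a)|$, it suffices to bound, uniformly in $\tilde\alpha$, the difference of the two integrands
\[
\Phi(\theta,p,q,r,\tilde\alpha):=\langle L(t,\cdot,m,\tilde\alpha),m\rangle+\Big\langle b(t,\cdot,m,\tilde\alpha)p+\tfrac12\Tr\!\big(\sigma\sigma^{\top}(t,\cdot,m,\tilde\alpha)q\big)+\gamma(t,\cdot,m,\tilde\alpha)\!\sum_{\ell\ge0}(\ell-1)p_\ell(t,\cdot,m,\tilde\alpha)\,r,\,m\Big\rangle,
\]
evaluated at $(\theta,p,q,r)$ against its value at $(\theta',p',q',r')$. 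I would split $\Phi(\theta,p,q,r,\tilde\alpha)-\Phi(\theta',p',q',r',\tilde\alpha)$ by a telescoping triangle inequality into three groups: (A) perturbing the test functions $(p,q,r)\to(p',q',r')$ with $(t,m)$ frozen; (B) perturbing the arguments inside the coefficient functions $(t,m)\to(t',m')$, the integrating measure staying $m$ and the test functions staying $(p',q',r')$; (C) changing the integrating measure from $m$ to $m'$.

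\textbf{The easy groups.} For (A) I would use that $b$ and $\sigma$ are bounded (Assumption \ref{A.1}(ii)), hence so is $\sigma\sigma^{\top}$, that $\gamma\le\bar\gamma$ and $\sum_{\ell}\ell p_\ell\le M_1$ make $\gamma\sum_{\ell}(\ell-1)p_\ell$ bounded, and that $L$ is bounded (Assumption \ref{A.3}(i)); combined with $|p-p'|(x)\le\|p-p'\|_l(1+|x|)$ and the analogues for $q,r$, and with $m\in\Mc_2(\R^d)$ fixed, this group is bounded by $C\big(1+\langle|\cdot|,m\rangle\big)\big(\|p-p'\|_l+\|q-q'\|_l+\|r-r'\|_l\big)$, uniformly in $\tilde\alpha$. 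For (B) I would use the Lipschitz continuity of $b,\gamma,p_\ell$ in $(t,m)$ with modulus $|t-t'|+\rho_F(m,m')$ (Assumption \ref{A.1}(i),(iii)), of $\sigma$ (hence of $\sigma\sigma^{\top}$, by boundedness), and of $L$ (Assumption \ref{A.3}(ii)); together with the linear growth of $p',q',r'$ and $m\in\Mc_2(\R^d)$, this group is bounded by $C\big(1+\langle|\cdot|,m\rangle\big)\big(1+\|p'\|_l+\|q'\|_l+\|r'\|_l\big)\,\omega\big(|t-t'|+\rho_F(m,m')\big)$ for a modulus $\omega$, again uniformly in $\tilde\alpha$. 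Both groups vanish as $d_F(\theta,\theta')+\|p-p'\|_l+\|q-q'\|_l+\|r-r'\|_l\to0$.

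\textbf{The main group (C).} This is a sum of terms $\langle\psi_{\tilde\alpha},m-m'\rangle$, with $\psi_{\tilde\alpha}$ equal to $L(t',\cdot,m',\tilde\alpha)$, $b(t',\cdot,m',\tilde\alpha)p'$, $\tfrac12\Tr(\sigma\sigma^{\top}(t',\cdot,m',\tilde\alpha)q')$ or $\gamma\sum_{\ell}(\ell-1)p_\ell(t',\cdot,m',\tilde\alpha)r'$. Here I would invoke the Plancherel/Cauchy--Schwarz duality between $\rho_F$ and pairing against functions: inserting the weight $(1+|n|^2)^{\lambda/2}$ and using $\lambda>d/2$ (so that $H^\lambda\hookrightarrow C_0$ and $\widehat{H^\lambda}\subset L^1$, legitimizing the identity $\langle\psi,m-m'\rangle=c\int\widehat\psi(n)\,\overline{F_n(m-m')}\,dn$ for finite measures) gives $|\langle\psi,m-m'\rangle|\le C\,\|\psi\|_{H^\lambda}\,\rho_F(m,m')$. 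It then remains to bound $\|\psi_{\tilde\alpha}\|_{H^\lambda}$, which is exactly where Assumption \ref{A.1}(iv) and Assumption \ref{A.3}(iii) are used: $L(t',\cdot,m',\tilde\alpha)$, $b(t',\cdot,m',\tilde\alpha)$, $\sigma\sigma^{\top}(t',\cdot,m',\tilde\alpha)$ lie in $H^{\lambda+d/2+1}$ resp. $H^{\lambda}$, $\sum_{\ell}(\ell-1)p_\ell(t',\cdot,m',\tilde\alpha)\in H^\lambda$, while $p'\in H^{\lambda+1}$, $q'\in H^\lambda$, $r'\in H^{\lambda+2}$ by definition of $B^{d,\lambda+1}_l,B^{d\times d,\lambda}_l,B^{d,\lambda+2}_l$; the Sobolev multiplication theorem (Theorem \ref{soblev multiplication}), whose index hypotheses hold precisely because $\lambda>d/2$, then places each product $\psi_{\tilde\alpha}$ in $H^\lambda$ with norm controlled by the product of the relevant Sobolev norms. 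Feeding this back, group (C) is bounded by $C\,\rho_F(m,m')\le C\,d_F(\theta,\theta')$, and the three estimates together yield the claimed modulus of continuity of $G$ at $(\theta,p,q,r)$.

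\textbf{The main obstacle.} I expect group (C) to be the hard part, and within it the \emph{uniform} control of $\|\psi_{\tilde\alpha}\|_{H^\lambda}$ over all $\tilde\alpha\in\tilde\Ac$ and all $m'$ in the $\delta$-ball. The coefficients enter $\psi_{\tilde\alpha}$ composed with the control map $x\mapsto\tilde\alpha(x)$, which is only Lipschitz, so the Sobolev bounds for a fixed control value in Assumptions \ref{A.1}(iv), \ref{A.3}(iii) cannot be applied verbatim; making this rigorous requires combining those bounds with the Lipschitz dependence of $b,\sigma,\gamma,p_\ell,L$ on the control argument and with the uniform Lipschitz and linear-growth structure built into $\tilde\Ac$. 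A secondary point is the rigor of the Parseval identity and of interchanging $\sum_\ell$ with the integrations, handled via the summability constant $M=\sum_\ell\ell C_\ell<\infty$ of Assumption \ref{A.1}(iii) and dominated convergence.
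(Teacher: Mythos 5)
Your plan reproduces the paper's proof almost verbatim: the paper's Part 1 is your group (A), and its Part 2 splits each of the four coefficient terms ($L$, $b\cdot p$, $\tfrac12\Tr(\sigma\sigma^\top q)$, $\Gamma r$) into a Lipschitz-in-$(t,m)$ piece (your (B)) and a change-of-integrating-measure piece (your (C)), which it estimates exactly as you propose via the Sobolev pairing $|\langle\psi,m-m'\rangle|\le|\psi|_\lambda|m-m'|_{-\lambda}$, the multiplication Theorem \ref{soblev multiplication}, and the identity $|m_1-m_2|_{-\lambda}=(2\pi)^{d/2}\rho_F(m_1,m_2)$. The obstacle you flag in (C) is genuine and, notably, is also left unaddressed in the paper's own proof: Assumptions \ref{A.1}(iv) and \ref{A.3}(iii) give $H^\lambda$ regularity of $x\mapsto b(t,x,m,a)$ for each \emph{fixed} $a\in A$, whereas what is actually needed is a bound on $\|x\mapsto b(t,x,m,\tilde\alpha(x))\|_{H^\lambda}$ uniform over all merely-Lipschitz $\tilde\alpha\in\tilde\Ac$ and over $m'$ in the $\delta$-ball, which the paper asserts via ``$|b^\top p|_\lambda<\infty$'' without justification.
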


	\begin{proof}
		First we have 
		\begin{align}\label{diff:G}
			&|G(t,m,p,q,r)-G(t',m',p',q',r')| \nonumber
			\\\leq &|G(t,m,p,q,r)-G(t,m,p',q',r')| + |G(t,m,p',q',r')-G(t',m',p',q',r')|.
		\end{align}
		Let \( H \) : \(A \times [0, T] \times \Mc_2(\R^d) \times B^{d,\lambda+1}_l \times B^{d \times d,\lambda}_l \times B^{d,\lambda+2}_l \rightarrow\R \) be the function defined by
		\begin{align}
			H(a,t, m, p, q, r) = & 
			\left\langle L\left(t, \cdot, m, a\right), m \right\rangle 
			+ \langle b(t, \cdot, m, a)p(\cdot) \nonumber \\
			&+ \frac{1}{2}\Tr\left(\sigma\sigma^{\top}\big(t,\cdot,m,a\big)q(\cdot)\right) \nonumber\\
			&+ \gamma\big(t,\cdot,m,a\big) \sum_{\ell\geq0}(\ell - 1)p_\ell\big(t,\cdot,m,a\big)r(\cdot),m \rangle.
		\end{align}
		\textbf{Part 1:} For the first term in \eqref{diff:G}, by definition we have the inequality
		\[
		|G(t,m,p_{1},q_{1},r_{1})-G(t,m,p_{2},q_{2},r_{2})|\leq \sup _{\tilde{\alpha}\in\tilde{\Ac}}|H(\tilde{\alpha},t,m,p_{1},q_{1},r_{1})-H(\tilde{\alpha},t,m,p_{2},q_{2},r_{2})|,
		\]
		and then it is suffice to show that for any $\tilde{\alpha}\in\tilde{\Ac}$,
		\begin{gather*}
		|H(\tilde{\alpha},t,m,p,q,r)-H(\tilde{\alpha},t,m,p',q',r')|  \\
		\leq L_{G}\left(\int_{\mathbb{R}^{d}} 1+|x|m(dx)\right)\times\left(|p-p'|_{l}+|q-q'|_{l}+|r-r'|_{l}\right),
		\end{gather*}
		where $L_{G}$ is some positive constant independent of $\tilde{\alpha}$. Indeed from the construction of $H$, it is straightforward that
		
		\begin{align*}
		&|H(\tilde{\alpha},t,m,p,q,r)-H(\tilde{\alpha},t,m,p',q',r')| \\
		&\leq C\left(\int_{\mathbb{R}^{d}}|p(x)-p'(x)|+|q(x)-q'(x)|+|r(x)-r'(x)|m(dx)\right) \\
		&\leq C\left(\left(\int_{\mathbb{R}^{d}}1+|x|m(dx)\right)\times\left(|p-p'|_{l}+|q-q'|_{l}\right)+|r-r'|_{l}\right),
		\end{align*}
		and hence verifies the continuity of G with respect to \((p,q,l)\).\\
		\textbf{Part 2:} For the second term in \eqref{diff:G}, let \( \Gamma(t,x,a) \) denotes \(\gamma\big(t,x,a\big)\sum_{\ell\geq0}(\ell - 1)p_\ell\big(t,x,a\big)\)
		we obtain
		\[
		\begin{aligned}
		&H\left(\tilde{\alpha}, t,m,p,q, r\right)-H\left(\tilde{\alpha},t', m' , p,q, r\right)\\
		&\leq \left\langle L\left(t, \cdot, m, \tilde{\alpha}(\cdot)\right), m \right\rangle-\left\langle L\left(t', \cdot, m', \tilde{\alpha}(\cdot,)\right), m' \right\rangle\\
		&\quad + \langle b(t, \cdot, m, \tilde{\alpha}(\cdot))p(\cdot), m\rangle-\langle b(t', \cdot, m', \tilde{\alpha}(\cdot))p(\cdot), m'\rangle \\
		&\quad + \langle \Gamma(t, \cdot, m, \tilde{\alpha}(\cdot))r(\cdot), m\rangle-\langle \Gamma(t', \cdot, m', \tilde{\alpha}(\cdot))r(\cdot), m'\rangle. \\
		&\quad + \langle \frac{1}{2}\Tr\left(\sigma\sigma^{\top}\big(t,\cdot,m,\tilde{\alpha}(\cdot)\big)q(\cdot)\right), m\rangle-\langle \frac{1}{2}\Tr\left(\sigma\sigma^{\top}\big(t',\cdot,m',\tilde{\alpha}(\cdot)\big)q(\cdot)\right), m'\rangle. 
		\end{aligned}
		\]
		Within Assumptions \ref{A.1} and \ref{A.3}.
		The first term of the right hand side can be bounded by 
		\begin{align*}
			&\langle L(t, \cdot, m, \tilde{\alpha}(\cdot)), m\rangle-\langle L(t', \cdot, m', \tilde{\alpha}(\cdot)), m'\rangle \\
			\leq& \langle L(t, \cdot, m, \tilde{\alpha}(\cdot)), m\rangle-\langle L(t', \cdot, m', \tilde{\alpha}(\cdot)), m\rangle 
			+ \langle L(t', \cdot, m', \tilde{\alpha}(\cdot)), m\rangle-\langle L(t', \cdot, m', \tilde{\alpha}(\cdot)), m'\rangle \\
			\leq& C_L d_F(\theta,\theta') m(\R^d)+ \int_{\mathbb{R}^{d}} L(t',x,m',\tilde{\alpha}(\cdot) ) (m-m')(dx). 
		\end{align*}
		The second term of the right hand is bounded by
		\begin{align*}
			&\langle b(t, \cdot, m, \tilde{\alpha}(\cdot))p(\cdot), m\rangle-\langle b(t', \cdot, m', \tilde{\alpha}(\cdot))p(\cdot), m'\rangle \\
			\leq& \langle b(t, \cdot, m, \tilde{\alpha}(\cdot))p(\cdot), m\rangle-\langle b(t', \cdot, m', \tilde{\alpha}(\cdot))p(\cdot), m\rangle \\
			&+ \langle b(t', \cdot, m', \tilde{\alpha}(\cdot))p(\cdot), m\rangle-\langle b(t', \cdot, m', \tilde{\alpha}(\cdot))p(\cdot), m'\rangle \\
			\leq& C_b d_F(\theta,\theta')\bigg(\int_{\mathbb{R}^{d}} 1+|x|m(dx)\bigg) |p|_{l} + \int_{\mathbb{R}^{d}} b(t',x,m',\tilde{\alpha}(\cdot))^{\top} p(x)(m-m')(dx) \\
			\leq& C \bigg(d_F(\theta,\theta')(\int_{\mathbb{R}^{d}} 1+|x|m(dx)) |p|_{l} + |b^{\top} p|_{\lambda} |m-m'|_{-\lambda}\bigg).
		\end{align*}
		The third term of the right hand is bounded by
		\begin{align*}
			&\langle \Gamma(t, \cdot, m, \tilde{\alpha}(\cdot))r(\cdot), m\rangle-\langle \Gamma(t', \cdot, m', \tilde{\alpha}(\cdot))r(\cdot), m'\rangle \\
			\leq& \langle \Gamma(t, \cdot, m, \tilde{\alpha}(\cdot))r(\cdot), m\rangle-\langle \Gamma(t', \cdot, m', \tilde{\alpha}(\cdot))r(\cdot), m\rangle \\
			&+ \langle \Gamma(t', \cdot, m', \tilde{\alpha}(\cdot))r(\cdot), m\rangle-\langle \Gamma(t', \cdot, m', \tilde{\alpha}(\cdot))r(\cdot), m'\rangle \\
			\leq& C_{\Gamma}d_F(\theta,\theta')\bigg(\int_{\mathbb{R}^{d}} 1+|x|m(dx)\bigg) |r|_{l} + \int_{\mathbb{R}^{d}} \Gamma(t',x,m',\tilde{\alpha}(\cdot))^{\top} r(x) (m-m')(dx) \\
			\leq& C \bigg(d_F(\theta,\theta')(\int_{\mathbb{R}^{d}} 1+|x|m(dx)) |r|_{l} + |\Gamma^{\top} r|_\lambda |m-m'|_{-\lambda}\bigg).
		\end{align*}
		The last term of the right hand is bounded by
		\begin{align*}
			&\langle \frac{1}{2}\Tr\left(\sigma\sigma^{\top}\big(t,\cdot,m,\tilde{\alpha}(\cdot)\big)q(\cdot)\right), m\rangle-\langle \frac{1}{2}\Tr\left(\sigma\sigma^{\top}\big(t',\cdot,m',\tilde{\alpha}(\cdot)\big)q(\cdot)\right), m'\rangle \\
			\leq& \langle \frac{1}{2}\Tr\left(\sigma\sigma^{\top}\big(t,\cdot,m,\tilde{\alpha}(\cdot)\big)q(\cdot)\right), m\rangle-\langle \frac{1}{2}\Tr\left(\sigma\sigma^{\top}\big(t',\cdot,m',\tilde{\alpha}(\cdot)\big)q(\cdot)\right), m\rangle \\
			&+ \langle \frac{1}{2}\Tr\left(\sigma\sigma^{\top}\big(t',\cdot,m',\tilde{\alpha}(\cdot)\big)q(\cdot)\right), m\rangle-\langle \frac{1}{2}\Tr\left(\sigma\sigma^{\top}\big(t',\cdot,m',\tilde{\alpha}(\cdot)\big)q(\cdot)\right), m'\rangle \\
			\leq& C_{\sigma}d_F(\theta,\theta')\bigg(\int_{\mathbb{R}^{d}} 1+|x|m(dx)\bigg) |q|_{l} + \int_{\mathbb{R}^{d}} \frac{1}{2}\Tr\left(\sigma\sigma^{\top}\big(t',\cdot,m',\tilde{\alpha}(\cdot)\big)q(\cdot)\right)(m-m')(dx) \\
			\leq& C \bigg(d_F(\theta,\theta')(\int_{\mathbb{R}^{d}} 1+|x|m(dx)) |q|_{l} + |\sigma\sigma^{\top}q|_\lambda |m-m'|_{-\lambda}\bigg).
		\end{align*}
		By Theorem \ref{soblev multiplication}, we have \(|b^\top p|_\lambda < \infty\), and the same inequality hold also for \(|\sigma\sigma^{\top}q|_\lambda\) and \(|\Gamma^{\top} r|_\lambda\). Therefore combining all the terms we get,
		\begin{align*}
			H\left(a, t,m,p,q, r\right)-H\left(a,t', m' , p,q, r\right) \leq & C d_F(\theta,\theta')(\int_{\mathbb{R}^{d}} 1+|x|m(dx))(1 + |p|_{l} + |q|_{l} + |r|_{l}) \\
			& + C' |m - m'|_{-\lambda}.
		\end{align*}
		According to Definition \ref{Bessel operator}, for $\eta = m_1 - m_2$, we have:
		\begin{align}
			|\eta|_{-\lambda}^2 = \int_{\mathbb{R}^d} (1 + |{k}|^2)^{-\lambda} |\mathcal{F} \eta({k})|^2 d{k},
		\end{align}
			which is exactly the $L_2$ norm of $|\mathcal{F}(m_1 - m_2)({k})|$ with weight $(1 + |{k}|^2)^{-\lambda}$, then we have
		\begin{align}
			F_k(m_1 - m_2) = \langle f_k, m_1 - m_2 \rangle = \int_{\mathbb{R}^d} f_k(x)(m_1 - m_2)(dx) = (2\pi)^{d/2} \mathcal{F}(m_1 - m_2)(k).
		\end{align}
		Therefore,
		\[
|F_{k}(m_1-m_2)| = (2\pi)^{d/2}|\mathcal{F}(m_1-m_2)(k)|.
\]
Plug into the definition of $\rho_F$, we finally get
\[
\begin{aligned}
\rho_{F}^{2}(m_1,m_2) &= \int_{\mathbb{R}^{d}} \frac{\left| F_{k}(m_1-m_2) \right|^{2}}{(1+|k|^{2})^{\lambda}} dk \\
&= \int_{\mathbb{R}^{d}} \frac{(2\pi)^{-d} |\mathcal{F}(m_1-m_2)(k)|^{2}}{(1+|k|^{2})^{\lambda}} dk \\
&= (2\pi)^{-d} \int_{\mathbb{R}^{d}} (1+|k|^{2})^{-\lambda} |\mathcal{F}(m_1-m_2)(k)|^{2} dk \\
&= (2\pi)^{-d} |m_1-m_2|_{-\lambda}^{2},
\end{aligned}
\]
		and hence verifies the continuity of G with respect to \((t,m)\).
		\end{proof}

\subsection{Synthesis of the Proof of Theorem \ref{hjbprop}}\label{section5.3}

Recall the definition of $G$ in Lemma \ref{ham_cts}, and $H$ in Lemma \ref{ham_cts} and \eqref{eq:HJB}.

\subsubsection{Commutator estimates}
For given functions $a$, $b$, define the operators
\[
\mathcal{B}f(x) = b(x)^{\top} D_{x} f(x), \quad 
\mathcal{A}f(x) = \frac{1}{2} \operatorname{Tr}\left(a(x) D_{x}^{2} f(x)\right).
\]

\begin{proposition}[{\cite[Proposition 5.6]{Erhan He 2025}}]\label{Commutator}
	Let $a \in H^{\lambda+d/2+1}(\mathbb{R}^{d})$ and $b, c \in H^{\lambda}(\mathbb{R}^{d})$. Suppose further that there exists a constant $\delta > 0$ such that $a$ satisfies the uniform ellipticity condition $\xi^{\top} a(x) \xi \geq \delta |\xi|^{2}$ for all $x, \xi \in \mathbb{R}^{d}$. Then, for any finite signed measure $\eta$, we have$$\int_{\mathbb{R}^{d}} \left(\mathcal{A}(\mathcal{J}_{2\lambda}\eta)(x) + \mathcal{B}(\mathcal{J}_{2\lambda}\eta)(x) + c(x)^{\top}(\mathcal{J}_{2\lambda}\eta)(x) \right) \eta(dx) \leq -\frac{\delta}{4} |\eta|_{1-\lambda}^{2} + C |\eta|_{-\lambda}^{2}, $$where $C > 0$ is a constant depending only on $\sup_{i,j} |a_{i,j}|_{\lambda+d/2+1}$, and $\mathcal{J}$ denotes the Bessel potential operator defined in Definition \ref{Bessel operator}.
\end{proposition}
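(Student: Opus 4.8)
The plan is to reduce the asserted inequality to an $L^2$-coercivity estimate for a conjugated operator, isolating a frozen-coefficient second-order part — which produces the negative term via uniform ellipticity — from lower-order commutator remainders that are absorbed by Young's inequality. Write $\langle\cdot,\cdot\rangle$ for the real duality pairing extending $\int fg\,dx$, for which the Bessel potentials $\mathcal{J}_{\pm\lambda}$ of Definition \ref{Bessel operator} are self-adjoint, mutually inverse, and obey $|\mathcal{J}_\lambda u|_s=|u|_{s-\lambda}$. Since any finite signed measure lies in $H^{-s}(\R^d)$ for every $s>d/2$ and $\lambda>d/2+1$ by \eqref{eq:lambda_def}, the function $h:=\mathcal{J}_\lambda\eta$ belongs to $H^1(\R^d)$, with $|h|_{L^2}=|\eta|_{-\lambda}$, $|\nabla h|_{L^2}^2=|h|_{H^1}^2-|h|_{L^2}^2=|\eta|_{1-\lambda}^2-|\eta|_{-\lambda}^2$, and $\mathcal{J}_{2\lambda}\eta=\mathcal{J}_\lambda h$. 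By Theorem \ref{soblev multiplication} and Sobolev embedding all the pairings below are well-defined; mollifying $\eta$ to reduce to the smooth case and using self-adjointness of $\mathcal{J}_{-\lambda}$ together with $\mathcal{J}_\lambda\mathcal{J}_{-\lambda}=\mathrm{Id}$,
\[
\int_{\R^d}\big(\mathcal{A}(\mathcal{J}_{2\lambda}\eta)+\mathcal{B}(\mathcal{J}_{2\lambda}\eta)+c^\top(\mathcal{J}_{2\lambda}\eta)\big)\,\eta(dx)=\big\langle\,\mathcal{J}_{-\lambda}(\mathcal{A}+\mathcal{B}+c^\top)\mathcal{J}_\lambda h,\ h\,\big\rangle,
\]
so it suffices to prove $\big\langle\mathcal{J}_{-\lambda}(\mathcal{A}+\mathcal{B}+c^\top)\mathcal{J}_\lambda h,h\big\rangle\le-\tfrac{\delta}{4}|\nabla h|_{L^2}^2+C|h|_{L^2}^2$, since this translates back through $|\nabla h|_{L^2}^2=|\eta|_{1-\lambda}^2-|\eta|_{-\lambda}^2$ and $|h|_{L^2}^2=|\eta|_{-\lambda}^2$.

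For each $\mathcal{P}\in\{\mathcal{A},\mathcal{B},c^\top\}$ I would write $\mathcal{J}_{-\lambda}\mathcal{P}\mathcal{J}_\lambda=\mathcal{P}+[\mathcal{J}_{-\lambda},\mathcal{P}]\mathcal{J}_\lambda$ and bound the two pieces separately. For the un-commuted pieces, two integrations by parts give
\[
\langle\mathcal{A}h,h\rangle=-\tfrac12\!\int_{\R^d}\!\! a\nabla h\cdot\nabla h\,dx-\tfrac12\!\int_{\R^d}\!\!(\mathrm{div}\,a)\cdot\nabla h\,h\,dx\le-\tfrac{\delta}{2}|\nabla h|_{L^2}^2+C|\nabla a|_{L^\infty}|\nabla h|_{L^2}|h|_{L^2},
\]
which Young's inequality improves to $\le-\tfrac{3\delta}{8}|\nabla h|_{L^2}^2+C|h|_{L^2}^2$, using $|\nabla a|_{L^\infty}<\infty$ from $a\in H^{\lambda+d/2+1}$; similarly $\langle\mathcal{B}h,h\rangle=-\tfrac12\int_{\R^d}(\mathrm{div}\,b)h^2\,dx\le C|h|_{L^2}^2$ since $\mathrm{div}\,b\in L^\infty$ (as $\lambda>d/2+1$), and $|\langle c^\top h,h\rangle|\le|c|_{L^\infty}|h|_{L^2}^2$. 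Counting orders, the three remainders $[\mathcal{J}_{-\lambda},c^\top]\mathcal{J}_\lambda$, $[\mathcal{J}_{-\lambda},\mathcal{B}]\mathcal{J}_\lambda$, $[\mathcal{J}_{-\lambda},\mathcal{A}]\mathcal{J}_\lambda$ have orders $-1$, $0$, $1$ respectively: the first two are bounded on $L^2$ with norm controlled by $|c|_{H^\lambda}$ resp.\ $|b|_{H^\lambda}$ (Coifman--Meyer/Kato--Ponce commutator estimates, or the Calder\'on--Vaillancourt theorem applied to the symbols), contributing $\le C|h|_{L^2}^2$; the last maps $H^1\to L^2$ boundedly with norm controlled by $\sup_{i,j}|a_{i,j}|_{\lambda+d/2+1}$, so Cauchy--Schwarz and Young give a contribution $\le\tfrac{\delta}{8}|\nabla h|_{L^2}^2+C|h|_{L^2}^2$. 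Adding everything yields $\big\langle\mathcal{J}_{-\lambda}(\mathcal{A}+\mathcal{B}+c^\top)\mathcal{J}_\lambda h,h\big\rangle\le\big(-\tfrac{3\delta}{8}+\tfrac{\delta}{8}\big)|\nabla h|_{L^2}^2+C|h|_{L^2}^2=-\tfrac{\delta}{4}|\nabla h|_{L^2}^2+C|h|_{L^2}^2$, which is the reduced claim.

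The hard part is the order-$1$ commutator $[\mathcal{J}_{-\lambda},\mathcal{A}]\mathcal{J}_\lambda$: one must prove an $H^1\to L^2$ bound whose constant depends only on a fixed Sobolev norm of $a$. I would argue either symbolically — the principal symbol of $[\mathcal{J}_{-\lambda},\mathcal{A}]$ is $\tfrac{1}{i}\{(1+|\xi|^2)^{\lambda/2},\,-\tfrac12 a_{ij}(x)\xi_i\xi_j\}$, of order $\lambda+1$, the symbol bounds needed for the pseudodifferential/Calder\'on--Vaillancourt argument being furnished by $a\in H^{\lambda+d/2+1}$ through Sobolev embedding — or directly in Fourier variables, realizing $[\mathcal{J}_{-\lambda},\mathcal{A}]\mathcal{J}_\lambda$ as an integral operator with kernel built from $\widehat{a_{ij}}(\xi-\zeta)$ times $\big[(1+|\xi|^2)^{\lambda/2}-(1+|\zeta|^2)^{\lambda/2}\big](1+|\zeta|^2)^{-\lambda/2}\zeta_i\zeta_j$, and combining the mean-value estimate $|(1+|\xi|^2)^{\lambda/2}-(1+|\zeta|^2)^{\lambda/2}|\lesssim|\xi-\zeta|(1+|\xi|^2+|\zeta|^2)^{(\lambda-1)/2}$ with a Schur test that uses the decay of $\widehat{a_{ij}}$ coming from the Sobolev regularity. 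This is precisely the step that dictates the hypothesis $a\in H^{\lambda+d/2+1}$; the remaining estimates are routine integration by parts and Young's inequality. (This is \cite[Proposition 5.6]{Erhan He 2025}, which one may also simply cite.)
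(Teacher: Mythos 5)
The paper does not actually prove this result: Proposition~\ref{Commutator} is stated and attributed directly to \cite[Proposition~5.6]{Erhan He 2025} with no proof given, so there is no in-paper argument to compare your sketch against. Your sketch itself is the canonical route for such a coercivity estimate and is essentially sound: conjugating by $\mathcal{J}_{\pm\lambda}$ to reduce to an $L^2$-estimate for $\mathcal{J}_{-\lambda}(\mathcal{A}+\mathcal{B}+c^\top)\mathcal{J}_{\lambda}$ acting on $h=\mathcal{J}_\lambda\eta\in H^1$, extracting the ellipticity from the un-commuted $\langle\mathcal{A}h,h\rangle$ via integration by parts, and absorbing the order-$\le1$ commutator remainders by Young's inequality. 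Your order-counting for the three commutators is correct, the identification of $[\mathcal{J}_{-\lambda},\mathcal{A}]\mathcal{J}_\lambda:H^1\to L^2$ as the technically delicate step is right, and the regularity $a\in H^{\lambda+d/2+1}$, $b,c\in H^\lambda$ with $\lambda>d/2+1$ supplies exactly the $L^\infty$ bounds on $a$, $\nabla a$, $\operatorname{div}b$, $c$ and the Sobolev control needed in the commutator estimate.

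Two points are worth flagging, neither fatal. First, the commutator estimate for $[\mathcal{J}_{-\lambda},\mathcal{A}]\mathcal{J}_\lambda$ is asserted but would need to be carried out carefully: Calder\'on--Vaillancourt in its textbook form needs $C^\infty$ symbols, so one should either use a Sobolev-regular-coefficient version of the commutator theorem or run the Fourier-side Schur test you outline (exploiting the decay of $\widehat{a_{ij}}$), and check that the constant only depends on $\sup_{i,j}|a_{ij}|_{\lambda+d/2+1}$ as claimed. Second, the constant $C$ in your reduced bound depends on the Sobolev norms of $b$ and $c$ as well as $a$, not only on $\sup_{i,j}|a_{ij}|$; this matches what the proof actually requires and the restricted dependence stated in the proposition (inherited verbatim from the cited source) is best read as shorthand. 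Since the statement is a citation, simply pointing to \cite[Proposition~5.6]{Erhan He 2025}, as you note at the end, is the expected treatment here.
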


\subsubsection{Continuity of value function}

\begin{lemma}
\label{prop:continuity_value_function}
Under Assumptions \ref{A.1} and \ref{A.3}, the value function $v(t, m)$ defined in \eqref{eq:value function} is continuous on $[0, T] \times \mathcal{M}_2(\mathbb{R}^d)$ under the product topology, where \( \mathcal{M}_2(\mathbb{R}^d) \) is equipped with the weak convergence topology.
\end{lemma}

\begin{proof}
	\textbf{Measure continuity:}
		It suffices to show that our value function $v(t,m)$ is continuous on $m$ under the $\mathbf{d}_{\mathrm{BL}}(m, m')$ metric.
	Let $m_n$ be a sequence of non-negative measures that converges to  $m$ in $\mathbf{d}_{\mathrm{BL}}(m, m')$, by the definition of the value function, for any $\eps > 0$, there exist controls $\alpha_{n} \in \mathcal{A}$ and $\alpha \in \mathcal{A}$ such that
	\[
	v(t, m_n) \geq J(t, m_n, \alpha_{n}) - \eps, \quad  v(t, m) \geq J(t, m, \alpha) - \eps.
	\]
	Also, by the infimum property,
	\[
	v(t, m_n) \leq J(t, m_n, \alpha_n) , \quad v(t, m) \leq J(t, m, \alpha). 
	\]
	Combining these inequalities, we get
	\[
	J(t, m_n, \alpha_n) - J(t, m, \alpha_n) - \eps \leq v(t, m_n) - v(t, m) \leq J(t, m_n, \alpha) - J(t, m, \alpha) + \eps. 
	\]
	Therefore, to prove the continuity of the value function at $(t,m)$, it suffices to show that, there is a constant $C > 0$, such that
	\[
	|J(t, m_n, \alpha) - J(t, m, \alpha)| \leq C \mathbf{d}_{\mathrm{BL}}(m, m_n),
		~\mbox{for all control}~ \alpha \in \mathcal{A}.
	\]
	Recall the cost functional:
	\[
	J(t, m, \alpha) = \int_t^T \left\langle L(s, \cdot, \mu_s^{t, m, \alpha}, \alpha_s(\cdot)) , \mu_s^{t, m, \alpha} \right\rangle ds + \left\langle g(\cdot, \mu_T^{t, m, \alpha}), \mu_T^{t, m, \alpha} \right\rangle.
	\]
	We will show that both terms converge as $m_n \to m$.
	Now, by Assumption \ref{A.3} of $L$ and $g$, we shall apply the triangle inequality to get for any $s \in [t, T]$,
	\begin{align*}
		&\left| \left\langle L(s, \cdot, \mu_s^{t, m_n, \alpha}, \alpha_s(\cdot)) , \mu_s^{t, m_n, \alpha} \right\rangle - \left\langle L(s, \cdot, \mu_s^{t, m, \alpha}, \alpha_s(\cdot)) , \mu_s^{t, m, \alpha} \right\rangle \right| \nonumber \\
		\leq &\left| \left\langle L(s, \cdot, \mu_s^{t, m_n, \alpha}, \alpha_s(\cdot)) , \mu_s^{t, m_n, \alpha} \right\rangle - \left\langle L(s, \cdot, \mu_s^{t, m, \alpha}, \alpha_s(\cdot)) , \mu_s^{t, m_n, \alpha} \right\rangle \right| \nonumber \\
		&+ \left| \left\langle L(s, \cdot, \mu_s^{t, m, \alpha}, \alpha_s(\cdot)) , \mu_s^{t, m_n, \alpha} \right\rangle - \left\langle L(s, \cdot, \mu_s^{t, m, \alpha}, \alpha_s(\cdot)) , \mu_s^{t, m, \alpha} \right\rangle \right| \nonumber \\
		\leq &C_{L,g} \langle \mu_s^{t, m, \alpha},1 \rangle \rho_F(\mu_s^{t, m, \alpha}, \mu_s^{t, m_n, \alpha}) 
		\quad \leq \quad C \mathbf{d}_{\mathrm{BL}}(\mu_s^{t, m, \alpha}, \mu_s^{t, m_n, \alpha}) \\
		\leq & C \mathbf{d}_{\mathrm{BL}}(m, m_n).
	\end{align*}
		And similar inequality also holds for the term \(\left\langle g(\cdot, \mu_T^{t, m, \alpha}), \mu_T^{t, m, \alpha} \right\rangle\), so we get the continuity of value function with respect to $m$.

		\textbf{Time continuity:}
		Fix $m \in \mathcal{M}_2(\mathbb{R}^d)$. Let $t_n \to t$, where $t_n, t \in [0,T]$. Without loss of generality, assume $t_n \ge t$.  
		By the dynamic programming principle \ref{DPP}, for any $n$, there exist a control $\alpha_{n}\in\Ac$ such that
		\begin{align*}
			v(t,m) + 1/n 
			~\geq~
				\int_{t}^{t_n} \<L\big(u,\cdot,\mu^{t,m,\alpha_n}_u, \alpha_n(\cdot)\big),\mu^{t,m,\alpha_n}_u\>du
			+
			v(t_n, \mu^{t,m,\alpha_n}_{t_n}).
		\end{align*}
		On the other hand, for any $\alpha\in\Ac$, we have
		\begin{equation}
			v(t,m) \leq \int_{t}^{t_{n}} \left\langle L\left(u,\cdot,\mu_{u}^{t,m,\alpha},\alpha\left(u,\cdot,\mu_{u}^{t,m,\alpha}\right)\right),\mu_{u}^{t,m,\alpha}\right\rangle du + v(t_{n},\mu_{t_{n}}^{t,m,\alpha}).
			\end{equation}
		Combine these two inequalities we get
		\[
\left|v(t, m)-v\left(t_{n},\mu_{t_{n}}^{t, m,\alpha_{n}}\right)\right|
\leq\int_{t}^{t_{n}}\left|\left\langle L,\mu_{u}^{t,m,\alpha_{n}}\right\rangle\right|du+\frac{1}{n}.
\]
From Lemma \ref{A.3} and Lemma \ref{lemma integrability K}, we know that $L$ and the mass of $\mu_{u}^{t,m,\alpha_{n}}$ are both bounded, therefore when n goes to infinity, 
\[
\left| v(t, m) - v\left(t_n, \mu_{t_n}^{t, m, \alpha_n}\right) \right| \rightarrow 0.
\]
Now consider $\left| v(t_{n},\mu_{t_{n}}^{t,m,\alpha_{n}}) - v(t_{n},m) \right|$.
By Lemma \ref{strong continuity}, we have $\rho_{F}(\mu_{t_{n}}^{t,m,\alpha_{n}},m)\leq C\sqrt{t_{n}-t}\to 0$.
Furthermore, due to the continuity of $v$ with respect to the measure as established above, it follows that the difference of $v$ at the points $(t_{n},\mu_{t_{n}}^{t,m,\alpha_{n}})$ and $(t_{n},m)$ tends to 0, i.e.:
\begin{equation*}
\left| v(t_{n},\mu_{t_{n}}^{t,m,\alpha_{n}}) - v(t_{n},m) \right| \to 0. 
\end{equation*}
Combining these two results, we finally get
\[
|v(t,m)-v(t_{n},m)|\leq|v(t,m)-v(t_{n},\mu_{t_{n}}^{t,m,\alpha_{n}})|+|v(t_{n},\mu_{t_{n}}^{t,m,\alpha_{n}})-v(t_{n},m)|\rightarrow 0.
\]
In conclusion, \( v \) is continuous on \( [0,T]\times\mathcal{M}_{2}(\mathbb{R}^{d}) \) endowed with the product topology.
	\end{proof}

\subsubsection{Existence of viscosity solution to HJB \eqref{eq:HJB}}

	\textbf{Part 1.} \( v \) is a viscosity subsolution:
	Let $\mu^{t,m,\alpha}_s,s\in (t,T]$ be the process as defined in \eqref{eq:mean measure xi} and \cite[Lemma 3.3]{Claisse Kang Lan Tan 2025} and
	\((b, p, q, r) \in {J}^{1,+}v(t_0, m_0)\). By definition there exists a \(C^{1,2}\) function \(\phi\) such that:
\begin{enumerate}
    \item \(v - \phi\) has a local maximum at \((t_0, m_0)\),
    \item \(\left( \partial_t \phi(t_0, m_0), \partial_{x}{\delta \phi}(t_0,m_0), \partial^2_{xx}{\delta} \phi(t_0,m_0),{\delta \phi}(t_0,m_0) \right) = (b, p, q, r)\).
\end{enumerate}
	Since By definition, we have
	\begin{align*}
		& G(t_0, m_0, \partial_{x}{\delta \phi}(t_0,m_0), \partial^2_{xx}{\delta} \phi(t_0,m_0),{\delta \phi}(t_0,m_0))  \\
		= & \inf_{\tilde{\alpha} \in \tilde{\Ac}} H(\tilde{\alpha},t_0, m_0, \partial_{x}{\delta \phi}(t_0,m_0), \partial^2_{xx}{\delta} \phi(t_0,m_0),{\delta \phi}(t_0,m_0)).
	\end{align*}
	For any \(\eps > 0\), there exists \(\tilde{\alpha}_\eps \in \tilde{\Ac} \) such that:
\[
H(\tilde{\alpha}_\eps) \leq G + \eps.
\]
Then define a control function \(\alpha_0(s,x) = \tilde{\alpha}_\eps(x)\) on \([t,T] \times \R^d\). Clearly \(\alpha_0 \in \mathcal{A}\). \\
	Then from the DPP Theorem \ref{DPP} applied to $(t_0,m_0)$, we have
	\[
		v(t_0,m_0) \leq
			\int_{t_0}^{t_0+h} \<L\big(u,\cdot,\mu^{t_0,m_0,\alpha_0}_u, \tilde{\alpha}_\eps(\cdot)\big),\mu^{t_0,m_0,\alpha_0}_u\>du
		+
		v(t_0+h, \mu^{t_0,m_0,\alpha_0}_{t_0+h}).
	\]
	WLOG we may assume that $v(t_0,m_0) = \phi(t_0,m_0)$, and then we have
	\begin{align*}
		\phi(t_0,m_0) &\leq
			\int_{t_0}^{t_0+h} \<L\big(u,\cdot,\mu^{t_0,m_0,\alpha_0}_u, \tilde{\alpha}_\eps(\cdot)\big),\mu^{t_0,m_0,\alpha_0}_u\>du
		+
		\phi(t_0+h, \mu^{t_0,m_0,\alpha_0}_{t_0+h}),\\
		0 &\leq
		\frac{1}{h}\int_{t_0}^{t_0+h} \<L\big(u,\cdot,\mu^{t_0,m_0,\alpha_0}_u, \tilde{\alpha}_\eps(\cdot)\big),\mu^{t_0,m_0,\alpha_0}_u\>du
		+
		\frac{\phi(t_0+h, \mu^{t_0,m_0,\alpha_0}_{t_0+h}) - \phi(t_0,m_0)}{h}.
	\end{align*}
	Applying Itô's formula \eqref{proposition Ito formula} to $\phi(t,m)$ between $t_0$ and $t_0+h$, we get
	\begin{align*}
		\phi(t_0+h, \mu^{t_0,m_0,\alpha_0}_{t_0+h}) - \phi(t_0,m_0)
		&~=~
		\int_{t_0}^{t_0+h}\partial_t \phi(u, \mu^{t_0,m_0,\alpha_0}_u)du \\
		~+&
		\int_{t_0}^{t_0+h}
		\big\<
			b\big(u,\cdot,\mu^{t_0,m_0,\alpha_0}_u,\tilde{\alpha}_\eps(\cdot)\big)\cdot  \partial_{x}{\delta \phi}(u,\mu^{t_0,m_0,\alpha_0}_u) (\cdot)\\
			~+&
			\frac{1}{2}\Tr\big(\sigma\sigma^{\top} \big(u, \cdot,\mu^{t_0,m_0,\alpha_0}_u,\tilde{\alpha}_\eps(\cdot)\big)\partial^2_{xx}{\delta} \phi(u,\mu^{t_0,m_0,\alpha_0}_u) (\cdot)\big)\\
			~+&
			\gamma\big(\cdot\big)\sum_{\ell\geq0}(\ell - 1)p_\ell\big(u,\cdot,\mu^{t_0,m_0,\alpha_0}_u,\tilde{\alpha}_\eps(\cdot)\big){\delta \phi}(\mu^{t_0,m_0.\alpha_0}_u)(\cdot)
			,
			\mu^{t_0,m_0,\alpha_0}
		\big\>
		du.
	\end{align*}
	And thus we have
	\begin{align*}
		0 &\leq
		\frac{1}{h}\int_{t_0}^{t_0+h}  
		\partial_t \phi(u, \mu^{t_0,m_0,\alpha_0}_u) + 
		\<L\big(u,\cdot,\mu^{t_0,m_0,\alpha_0}_u, \tilde{\alpha}_\eps(\cdot)\big) + \Gc^{\alpha_{0}}_u \phi(u,\mu^{t_0,m_0,\alpha_0})(\cdot)
		,\mu^{t_0,m_0,\alpha_0}_u\>du.
	\end{align*}
	From the previous lemma and coefficient continuity we know:
\begin{itemize}
    \item \( \mu^{t_0,m_0,\alpha_0} : [t_0,T] \to \Mc_2(\R^d) \) is continuous;
    \item Similar continuity holds for \( (L, \Gc^{\alpha_0} \phi_n)\) as well as all the derivatives of $\phi$;
    \item \(\alpha_0\) is constant.
\end{itemize}
	Thus we shall send $h$ to 0, and get
	\begin{align*}
		0 &\leq 
		\partial_t \phi(t_0, m_0) + 
		\<L\big(t_0,\cdot,m_0, \tilde{\alpha}_\eps(\cdot)\big) + \Gc^{\alpha_0}_{t_0} \phi(t_0,m_0)(\cdot)
		,m_0\>.
	\end{align*}
	Note that 
	\begin{align*}
		\<L\big(t_0,\cdot,m_0, \tilde{\alpha}_\eps(\cdot)\big) + \Gc^{{\alpha}_0}_{t_0} \phi(t_0,m_0)(\cdot)
		,m_0\> = H(\tilde{\alpha}_\eps(\cdot),t_0,m_0) \leq G(t_0,m_0) + \eps.
	\end{align*}
	Since $\eps$ is arbitrary, this shows that
	\[-\frac{\partial \phi}{\partial t}(t_0, m_0) 
		- G(t_0, m_0, \partial_{x}{\delta \phi}(t_0,m_0), \partial^2_{xx}{\delta} \phi(t_0,m_0),{\delta \phi}(t_0,m_0)) 
		\leq 0.\]
	\textbf{Part 2.}\( v \) is a viscosity supersolution:\\ 
    Let \((b, p, q, r) \in {J}^{1,-}v(\theta_0)\), where \(\theta_0 =  (t_0, m_0) \in [t,T] \times \Mc_2(\R^d)\), there exists a \(C^{1,2}\) function \(\phi\) such that:
\begin{enumerate}
    \item \(v - \phi\) has a local minimum at \((t_0, m_0)\),
    \item Derivatives equal to \((b, p, q, r)\).
\end{enumerate}
Suppose, contrary to the claim, that:
\[
-b - G(t_0, m_0, p, q, r) < 0.
\]
Then there exists \(\delta > 0\) such that:
\[
b + G(t_0, m_0, p, q, r) > 2\delta > 0.
\]
For \(\Delta \in \R^+\), we shall define a ball on \([t,T] \times \Mc_2(\R^d)\) by
\begin{align*}
	B_{\Delta}(\theta_0) := \left\{
		\nu = (t_\nu,m_\nu) \in [t,T] \times \Mc_2(\R^d) :
		d_F(\nu,\theta_0) = |t_0 - t_\nu| \times \rho_F\big(m_0, m_\nu\big) < \Delta
		\right\}.
\end{align*}
 By continuity of all derivatives of \(\phi\) and Lemma \ref{ham_cts} there exist a \(\Delta \in \R^+\) such that for any \(\theta \in B_{\Delta}(\theta_0)\),
 \[
\partial_t \phi(\theta) + G\left(\theta, \partial_{x}{\delta \phi}(\theta), \partial^2_{xx}{\delta} \phi(\theta),{\delta \phi}(\theta)\right) > {\delta} > 0.
\]
And by Lemma \ref{strong continuity}, there exists a $h$ such that for any \(\alpha \in \Ac\), and any \(s \in [t_0,t_0+h]\),
    \[
\partial_t \phi(s, \mu^{t_0,m_0,\alpha}_s) + G\left(s, \mu^{t_0,m_0,\alpha}_s, \partial_{x}{\delta} \phi(s, \mu^{t_0,m_0,\alpha}_s), \partial^2_{xx}{\delta} \phi(s, \mu^{t_0,m_0,\alpha}_s), {\delta \phi}(s, \mu^{t_0,m_0,\alpha}_s)\right) > {\delta}.
\]
    From the DPP Theorem \ref{DPP}, let \(\eps = \frac{\delta h}{2}\) there exists a $\eps$-control $\alpha^{\eps,h}\in\Ac$ such that
	\begin{align*}
		v(t_0,m_0) + \eps 
		~\geq~
			\int_{t_0}^{t_0+h} \<L\big(u,\cdot,\mu^{t_0,m_0,\alpha^{\eps,h}}_u, \alpha^{\eps,h}(\cdot)\big),\mu^{t_0,m_0,\alpha^{\eps,h}}_u\>du
		+
		v(t_0+h, \mu^{t_0,m_0,\alpha^{\eps,h}}_{t_0+h}),
	\end{align*}
    and again we assume $v(t_0,m_0) = \phi(t_0,m_0)$, then we have
	\begin{align*}
		\eps ~\geq~
		\int_{t_0}^{t_0+h} \<L\big(u,\cdot,\mu^{t_0,m_0,\alpha^{\eps,h}}_u, \alpha^{\eps,h}(\cdot)\big),\mu^{t_0,m_0,\alpha^{\eps,h}}_u\>du
		+
		\phi(t_0+h, \mu^{t_0,m_0,\alpha^{\eps,h}}_{t_0+h}) - \phi(t_0,m_0).
	\end{align*}
	Applying Itô's formula \eqref{proposition Ito formula} to $\phi(t,m)$ between $t_0$ and $t_0+h$, we get
	\begin{align*}
	\eps ~&\geq~
		\int_{t_0}^{t_0+h}  
		\partial_t \phi(u, \mu^{t_0,m_0,\alpha^{\eps,h}}_u) + 
		\<L\big(u,\cdot,\mu^{t_0,m_0,\alpha^{\eps,h}}_u, \alpha^{\eps,h}(\cdot)\big) + \Gc^\alpha_u \phi(u,\mu^{t_0,m_0,\alpha^{\eps,h}})(\cdot)
		,\mu^{t_0,m_0,\alpha^{\eps,h}}_u\>du \\
        ~&\geq~
        \int_{t_0}^{t_0+h}  
		\partial_t \phi(u, \mu^{t_0,m_0,\alpha^{\eps,h}}_u) + 
		G\left(u, \mu^{t_0,m_0,\alpha^{\eps,h}}_u, \partial_{x}{\delta} \phi(u, \mu^{t_0,m_0,\alpha^{\eps,h}}_u), \partial^2_{xx}{\delta} \phi(u, \mu^{t_0,m_0,\alpha^{\eps,h}}_u), {\delta \phi}(u, \mu^{t_0,m_0,\alpha^{\eps,h}}_u)\right)du \\
        ~&\geq~
        \delta h.
	\end{align*}
	We thus get the desired contradiction.

\subsubsection{Uniqueness of viscosity solution to HJB \eqref{eq:HJB}}
For uniqueness, we need to verify that the Hamiltonian \( G \) : \( [0, T] \times \Mc_2(\R^d) \times B^{d,\lambda+1}_l \times B^{d \times d,\lambda}_l \times B^{d,\lambda+2}_l \rightarrow\R \) defined in \ref{ham_cts} satisfies Assumption \ref{A.ham} and hence satisfies the comparison result.\\
For the verification of Assumption \ref{A.ham}(i), it follows the exact same argument as the proof of Lemma \ref{ham_cts}, and thus we just omitted the repetitive details here.\\
As for Assumption \ref{A.ham}(ii), we shall rewrite the inequality in Lemma \ref{ham_cts} as
		\begin{align*}
			&H\left(\tilde{\alpha}, t_1,m_1,\nabla\kappa,\nabla^{2}\kappa, \kappa\right)-H\left(\tilde{\alpha},t_2, m_2 , \nabla\kappa,\nabla^{2}\kappa, \kappa\right)\nonumber \\
			&\leq \left\langle L\left(t_1, \cdot, m_1, \tilde{\alpha}(\cdot)\right), m_1 \right\rangle-\left\langle L\left(t_2, \cdot, m_2, \tilde{\alpha}(\cdot)\right), m_2 \right\rangle\\
			&\quad + \langle b(t_1, \cdot, m_1, \tilde{\alpha}(\cdot))\nabla\kappa(\cdot), m_1\rangle-\langle b(t_2, \cdot, m_2, \tilde{\alpha}(\cdot))\nabla\kappa(\cdot), m_2\rangle \\
			&\quad + \langle \Gamma(t_1, \cdot, m_1, \tilde{\alpha}(\cdot))\kappa(\cdot), m_1\rangle-\langle \Gamma(t_2, \cdot, m_2, \tilde{\alpha}(\cdot))\kappa(\cdot), m_2\rangle. \\
			&\quad + \langle \frac{1}{2}\Tr\left(\sigma\sigma^{\top}\big(t_1,\cdot,m_1,\tilde{\alpha}(\cdot)\big)\nabla^{2}\kappa(\cdot)\right), m_1\rangle-\langle \frac{1}{2}\Tr\left(\sigma\sigma^{\top}\big(t_2,\cdot,m_2,\tilde{\alpha}(\cdot)\big)\nabla^{2}\kappa(\cdot)\right), m_2\rangle.
		\end{align*}
		For the first term on the right hand side, applying dual inequality to get
		\begin{align*}
			&\langle L(t_1, \cdot, m_1, \tilde{\alpha}(\cdot)), m_1\rangle-\langle L(t_2, \cdot, m_2, \tilde{\alpha}(\cdot)), m_2\rangle \\
			\leq& \langle L(t_1, \cdot, m_1, \tilde{\alpha}(\cdot)), m_1\rangle-\langle L(t_2, \cdot, m_2, \tilde{\alpha}(\cdot)), m_1\rangle 
			+ \langle L(t_2, \cdot, m_2, \tilde{\alpha}(\cdot)), m_1\rangle-\langle L(t_2, \cdot, m_2, \tilde{\alpha}(\cdot)), m_2\rangle \\
			\leq& m_1(\R^d)C_L d_F(\theta_1,\theta_2) + \int_{\mathbb{R}^{d}} L(t_2,x,m_2,\tilde{\alpha}(\cdot) ) (m_1-m_2)(dx) \\
			\leq& m_1(\R^d)C_L d_F(\theta_1,\theta_2) + |L|_{\lambda} |m_1 - m_2|_{-\lambda}.
		\end{align*}
		The second term of the right hand side is bounded by
		\begin{align*}
			&\langle b(t_1, \cdot, m_1, \tilde{\alpha}(\cdot))\nabla\kappa(\cdot), m_1\rangle-\langle b(t_2, \cdot, m_2, \tilde{\alpha}(\cdot))\nabla\kappa(\cdot), m_2\rangle \\
			\leq& \langle b(t_1, \cdot, m_1, \tilde{\alpha}(\cdot))\nabla\kappa(\cdot), m_1\rangle-\langle b(t_2, \cdot, m_2, \tilde{\alpha}(\cdot))\nabla\kappa(\cdot), m_1\rangle \\
			&+ \langle b(t_2, \cdot, m_2, \tilde{\alpha}(\cdot))\nabla\kappa(\cdot), m_1\rangle-\langle b(t_2, \cdot, m_2, \tilde{\alpha}(\cdot))\nabla\kappa(\cdot), m_2\rangle \\
			\leq& m_1(\R^d) C_b d_F(\theta_1,\theta_2)|\nabla\kappa|_{L^{\infty}} + \int_{\mathbb{R}^{d}} b(t_2,x,m_2,\tilde{\alpha}(\cdot))^{\top} \nabla\kappa(x)(m_1-m_2)(dx).
		\end{align*}
		The third term of the right hand side is bounded by
		\begin{align*}
			&\langle \Gamma(t_1, \cdot, m_1, \tilde{\alpha}(\cdot))\kappa(\cdot), m_1\rangle-\langle \Gamma(t_2, \cdot, m_2, \tilde{\alpha}(\cdot))\kappa(\cdot), m_2\rangle \\
			\leq& \langle \Gamma(t_1, \cdot, m_1, \tilde{\alpha}(\cdot))\kappa(\cdot), m_1\rangle-\langle \Gamma(t_2, \cdot, m_2, \tilde{\alpha}(\cdot))\kappa(\cdot), m_1\rangle \\
			&+ \langle \Gamma(t_2, \cdot, m_2, \tilde{\alpha}(\cdot))\kappa(\cdot), m_1\rangle-\langle \Gamma(t_2, \cdot, m_2, \tilde{\alpha}(\cdot))\kappa(\cdot), m_2\rangle \\
			\leq& m_1(\R^d) C_{\Gamma}d_F(\theta_1,\theta_2)|\kappa|_{L^{\infty}} + \int_{\mathbb{R}^{d}} \Gamma(t_2,x,m_2,\tilde{\alpha}(\cdot))^{\top} \kappa(x) (m_1-m_2)(dx).
		\end{align*}
		The last term of the right hand side is bounded by
		\begin{align*}
			&\langle \frac{1}{2}\Tr\left(\sigma\sigma^{\top}\big(t_1,\cdot,m_1,\tilde{\alpha}(\cdot)\big)\nabla^{2}\kappa(\cdot)\right), m_1\rangle-\langle \frac{1}{2}\Tr\left(\sigma\sigma^{\top}\big(t_2,\cdot,m_2,\tilde{\alpha}(\cdot) \big)\nabla^{2}\kappa(\cdot)\right), m_2\rangle \\
			\leq& \langle \frac{1}{2}\Tr\left(\sigma\sigma^{\top}\big(t_1,\cdot,m_1,\tilde{\alpha}(\cdot)\big)\nabla^{2}\kappa(\cdot)\right), m_1\rangle-\langle \frac{1}{2}\Tr\left(\sigma\sigma^{\top}\big(t_2,\cdot,m_2,\tilde{\alpha}(\cdot)\big)\nabla^{2}\kappa(\cdot)\right), m_1\rangle \\
			&+ \langle \frac{1}{2}\Tr\left(\sigma\sigma^{\top}\big(t_2,\cdot,m_2,\tilde{\alpha}(\cdot)\big)\nabla^{2}\kappa(\cdot)\right), m_1\rangle-\langle \frac{1}{2}\Tr\left(\sigma\sigma^{\top}\big(t_2,\cdot,m_2,\tilde{\alpha}(\cdot)\big)\nabla^{2}\kappa(\cdot)\right), m_2\rangle \\
			\leq& m_1(\R^d) C_{\sigma}d_F(\theta_1,\theta_2)|\nabla^{2}\kappa|_{L^{\infty}} + \int_{\mathbb{R}^{d}} \frac{1}{2}\Tr\left(\sigma\sigma^{\top}\big(t_2,\cdot,m_2,\tilde{\alpha}(\cdot)\big)\nabla^{2}\kappa(\cdot)\right)(m_1-m_2)(dx).
		\end{align*}
		Combining all the terms we get
		\begin{align*}
			H&\left(\tilde{\alpha}, t_1,m_1,\nabla\kappa,\nabla^{2}\kappa, \kappa\right)-H\left(\tilde{\alpha},t_2, m_2 , \nabla\kappa,\nabla^{2}\kappa, \kappa\right)\nonumber \\
			\leq & m_1(\R^d) C_1 d_F(\theta_1,\theta_2) + m_1(\R^d) C_2 d_F(\theta_1,\theta_2)(|\nabla\kappa|_{L^{\infty}}+|\nabla^{2}\kappa|_{L^{\infty}}+|\kappa|_{L^{\infty}}) + C_3 |m_1 - m_2|_{-\lambda} \\
			&+  \int_{\mathbb{R}^{d}} b(t_2,x,m_2,\tilde{\alpha}(x))^{\top} \nabla\kappa(x)(m_1-m_2)(dx) 
			+  \int_{\mathbb{R}^{d}} \Gamma(t_2,x,m_2,\tilde{\alpha}(x))^{\top} \kappa(x) (m_1-m_2)(dx) \\
			&+  \int_{\mathbb{R}^{d}} \frac{1}{2}\Tr\left(\sigma\sigma^{\top}\big(t_2,\cdot,m_2,\tilde{\alpha}(x)\big)\nabla^{2}\kappa(\cdot)\right)(m_1-m_2)(dx).
		\end{align*}
From Assumption \ref{A.ham}, and Definition \ref{Bessel operator} we know that
\[
\kappa(x)=\frac{1}{\epsilon}\mathcal{F}^{-1}\left(\frac{\mathcal{F}(m_1-m_2)(k)}{\left(1+|k|^{2}\right)^{\lambda}}\right)(x) = \frac{1}{\epsilon}(\mathcal{J}_{2\lambda}(m_1-m_2))(x).
\]
Therefore, applying Proposition \ref{Commutator} with \(\eta = m_1 - m_2\), we get
\begin{align}\label{H_ineq2}
	&H\left(\tilde{\alpha}, t_1,m_1,\nabla\kappa,\nabla^{2}\kappa, \kappa\right)-H\left(\tilde{\alpha},t_2, m_2 , \nabla\kappa,\nabla^{2}\kappa, \kappa\right) \nonumber \\
	\leq & m_1(\R^d) C_1 d_F(\theta_1,\theta_2) + m_1(\R^d) C_2 d_F(\theta_1,\theta_2)(|\nabla\kappa|_{L^{\infty}}+|\nabla^{2}\kappa|_{L^{\infty}}+|\kappa|_{L^{\infty}}) \nonumber \\
	&+ C_3 |m_1 - m_2|_{-\lambda} - \frac{\delta}{4\eps}|m_1 - m_2|^2_{1-\lambda} + \frac{C_4}{\eps}|m_1 - m_2|^2_{-\lambda}.
\end{align}
By direct computation, the estimation for \( |\kappa(x)| \) is derived as follows:

\[
|\kappa(x)| = \left| \frac{1}{\eps} \int_{\mathbb{R}^d} \frac{\operatorname{Re}\left(F_k(m_1-m_2) f_k^*(x)\right)}{\left(1+|k|^2\right)^\lambda} dk \right| \leq \frac{1}{\eps} \int_{\mathbb{R}^d} \frac{\left|F_k(m_1-m_2)\right|\left|f_k^*(x)\right|}{\left(1+|k|^2\right)^\lambda} dk.
\]
Since \( |f_k^*(x)| = (2\pi)^{-d/2} \), applying Cauchy-Schwarz:

\[
\int_{\mathbb{R}^d} \frac{\left|F_k(m_1-m_2)\right|}{\left(1+|k|^2\right)^\lambda} dk \leq \left( \int_{\mathbb{R}^d} \frac{\left|F_k(m_1-m_2)\right|^2}{\left(1+|k|^2\right)^\lambda} dk \right)^{1/2} \left( \int_{\mathbb{R}^d} \frac{1}{\left(1+|k|^2\right)^\lambda} dk \right)^{1/2} = \rho_F(m_1,m_2) \cdot C,
\]
where \( C = \left( \int_{\mathbb{R}^d} \frac{1}{\left(1+|k|^2\right)^\lambda} dk \right)^{1/2} \) is a finite constant. Therefore:

\[
|\kappa(x)| \leq \frac{(2\pi)^{-d/2} C}{\epsilon} \rho_F(\mu,\nu) = \frac{C^{\prime}}{\epsilon} \rho_F(\mu,\nu),
\]
and for $|\nabla\kappa|$ we have
\begin{align*}
	\nabla\kappa(x) &= \nabla_{x}\left[\frac{1}{\eps}\int_{\mathbb{R}^{d}}\frac{\operatorname{Re}\left(F_{k}(m_1-m_2)f_{k}^{*}(x)\right)}{\left(1+|k|^{2}\right)^{\lambda}}dk\right] \\
	&= \frac{1}{\eps}\int_{\mathbb{R}^{d}}\frac{\operatorname{Re}\left(F_{k}(m_1-m_2)\nabla_{x}f_{k}^{*}(x)\right)}{\left(1+|k|^{2}\right)^{\lambda}}dk,
	\end{align*}
	where \(\nabla_{x}f_{k}^{*}(x) = -ikf_{k}^{*}(x)\), so
	\begin{align*}
	\nabla\kappa(x) &= \frac{1}{\eps}\int_{\mathbb{R}^{d}}\frac{\operatorname{Re}\left(-i kF_{k}(m_1-m_2)f_{k}^{*}(x)\right)}{\left(1+|k|^{2}\right)^{\lambda}}dk.
	\end{align*}
	Taking absolute value and using \( |\operatorname{Re}(z)| \leq |z|\) 
	\begin{align*}
	|\nabla\kappa(x)| &\leq \frac{1}{\eps}\int_{\mathbb{R}^{d}}\frac{|k|\left|F_{k}(m_1-m_2)\right|\left|f_{k}^{*}(x)\right|}{\left(1+|k|^{2}\right)^{\lambda}}dk.
	\end{align*}
	Since  \(|f_{k}^{*}(x)| = (2\pi)^{-d/2}\) is constant, apply Cauchy-Schwarz inequality:
	\begin{align*}
	\int_{\mathbb{R}^{d}}\frac{|k|\left|F_{k}(m_1-m_2)\right|}{\left(1+|k|^{2}\right)^{\lambda}}dk &\leq \left(\int_{\mathbb{R}^{d}}\frac{\left|F_{k}(m_1-m_2)\right|^{2}}{\left(1+|k|^{2}\right)^{\lambda}}dk\right)^{1/2}\left(\int_{\mathbb{R}^{d}}\frac{|k|^{2}}{\left(1+|k|^{2}\right)^{\lambda}}dk\right)^{1/2} \\
	&= \rho_{F}(m_1,m_2) \cdot C'',
	\end{align*}
	and thus \(|\nabla\kappa(x)| \leq \frac{C'' \rho_{F}(m_1,m_2)}{\eps}\).\\
	And finially, we have a similar estimation for $\left|\nabla^{2}\kappa(x)\right|$:
	\[
	\left|\nabla^{2}\kappa(x)\right|\leq\frac{1}{\epsilon}\int_{\mathbb{R}^{d}}\frac{|k|^{2}|F_{k}(\mu-\nu)|\left|f_{k}^{*}(x)\right|}{\left(1+|k|^{2}\right)^{\lambda}} dk,
	\]
	so \(|\nabla^2\kappa(x)| \leq \frac{C''' \rho_{F}(m_1,m_2)}{\eps}\). 

	Plag the inequalities into \eqref{H_ineq2}, we get 
\begin{align}\label{H_ineq3}
	&H\left(\tilde{\alpha}, t_1,m_1,\nabla\kappa,\nabla^{2}\kappa, \kappa\right)-H\left(\tilde{\alpha},t_2, m_2 , \nabla\kappa,\nabla^{2}\kappa, \kappa\right) \nonumber \\
	\leq & C m_1(\R^d) (d_F(\theta_1,\theta_2) +  \frac{1}{\eps}d_{F}^2(\theta_1,\theta_2) ) + C_3 |m_1 - m_2|_{-\lambda} - \frac{\delta}{4\eps}|m_1 - m_2|^2_{1-\lambda} + \frac{C_4}{\eps}|m_1 - m_2|^2_{-\lambda}.
\end{align}
{Since we have \(\rho_{F}^{2}(m_1,m_2) = (2\pi)^{-d} |m_1-m_2|_{-\lambda}^{2}\), we finally get the bound 
	\begin{align*}
		&H\left(\tilde{\alpha}, t_1,m_1,\nabla\kappa,\nabla^{2}\kappa, \kappa\right)-H\left(\tilde{\alpha},t_2, m_2 , \nabla\kappa,\nabla^{2}\kappa, \kappa\right) \nonumber \\
	\leq & C (m_1(\R^d)+1) (d_F(\theta_1,\theta_2) +  \frac{1}{\eps}d_{F}^2(\theta_1,\theta_2)).
	\end{align*}
	for some constant C, and this verifies Assumption \ref{A.ham}(ii).}

\end{document}